\newtheorem{theorem}{Theorem}
\newtheorem{lemma}[theorem]{Lemma}
\newtheorem{corollary}[theorem]{Corollary}
\newtheorem{proposition}[theorem]{Proposition}
\newtheorem{lettertheorem}{Theorem}
\newtheorem{letterlemma}[lettertheorem]{Lemma}
\theoremstyle{definition}
\theoremstyle{remark}
\numberwithin{equation}{section}
\newcommand{\dist}{{\rm dist}}
\newcommand{\essinf}{{\rm ess\,inf}}
\newcommand{\D}{\mathbb{D}}
\newcommand{\DD}{\widehat{\mathcal{D}}}
\newcommand{\N}{\mathbb{N}}
\newcommand{\M}{\mathcal{M}}
\newcommand{\R}{\mathbb{R}}
\newcommand{\C}{\mathbb{C}}
\renewcommand{\phi}{\varphi}
\newcommand{\T}{\mathbb{T}}
\def\a{\alpha}       \def\b{\beta}        \def\g{\gamma}
\def\d{\delta}           
     \def\om{\omega}      
\def\s{\sigma}       \def\t{\theta}       
                  \def\z{\zeta}
                  \def\vp{\varphi}
\def\LL{\mathcal{L}}
\def\dm{\,d(\omega\otimes m)}
\def\R{{\mathcal R}}
\newenvironment{Prf}{\noindent{\emph{Proof of}}}
{\hfill$\Box$ }
\begin{document}
\title[Bergman projection induced by kernel with integral representation]{Bergman projection induced by kernel with integral representation}

\keywords{Bergman space, Bergman Projection, doubling weight, dyadic operator, reproducing kernel, testing condition.}

\thanks{
}

\author{Jos\'e \'Angel Pel\'aez}
\address{Jos\'e \'Angel Pel\'aez, Departamento de An\'alisis Matem\'atico, Universidad de M\'alaga, Campus de
Teatinos, 29071 M\'alaga, Spain} \email{japelaez@uma.es}

\author{Jouni R\"atty\"a}
\address{Jouni R\"atty\"a, University of Eastern Finland, P.O.Box 111, 80101 Joensuu, Finland}
\email{jouni.rattya@uef.fi}
\date{\today}

\author{Brett D. Wick}
\address{Brett D. Wick, Department of Mathematics, Washington University -- St. Louis, One Brookings Drive, St. Louis, MO USA 63130}
\email{wick@math.wustl.edu}

\thanks{This research was supported in part by the Ram\'on y Cajal program
of MICINN (Spain); by Ministerio de Econom\'{\i}a y Competitivivad,
Spain, project MTM2014-52865-P;  by   La Junta de Andaluc{\'i}a,
(FQM210) and (P09-FQM-4468);  by Academy of Finland project no.~268009, and by Faculty of Science and Forestry of University of Eastern Finland project no.~930349; by National Science Foundation Grants DMS \# 1560955 and \# 1603246.}
\date{\today}

\maketitle

\begin{abstract}
Bounded Bergman projections $P_\omega:L^p_\omega(v)\to
L^p_\omega(v)$, induced by reproducing kernels admitting the
representation
    $$
    \frac{1}{(1-\overline{z}\zeta)^\gamma}\int_0^1\frac{d\nu(r)}{1-r\overline{z}\zeta},
    $$
and the corresponding (1,1)-inequality are characterized in terms of
Bekoll\'e-Bonami-type conditions. The two-weight inequality for the
maximal Bergman projection $P^+_\omega:L^p_\omega(u)\to
L^p_\omega(v)$ in terms of Sawyer-testing conditions is also
discussed.
\end{abstract}

\section{Introduction and main results}

Let $\DD$ denote the set of positive Borel measures $\om$ on $[0,1)$
such that $\widehat{\om}(r)=\int_r^1d\om(r)\le
C\widehat{\om}\left(\frac{1+r}{2}\right)$ for some $C=C(\om)>0$. For
$0<p<\infty$ and $\omega\in\DD$, the weighted Bergman space
$A^p_\omega$ consists of analytic functions $f$ in the unit disc
$\D=\{z\in\C:|z|<1\}$ such that
    $$
    \|f\|_{A^p_\omega}^p=\int_\D|f(z)|^p\dm(z)<\infty,
    $$
where $\dm(re^{i\t})=rd\om(r)d\t$. As usual, we write~$A^p_\alpha$
for the standard weighted Bergman space induced by the measure $\om$
for which $\dm(z)=(\a+1)(1-|z|^2)^\alpha dA(z)=dA_\a(z)$, where
$-1<\alpha<\infty$ and $dA(z)=\frac{dxdy}{\pi}$ denotes the
normalized Lebesgue area measure on $\D$. For simplicity, we also
write $\int_Ef(z)\dm(z)=(f\om)(E)$ for each non-negative $f$.

By the proof of \cite[Theorem~3.3]{PelSum14}, for $\om\in\DD$, the
norm convergence in $A^2_\om$ implies the uniform convergence on
compact subsets, and hence each point evaluation $L_z(f)=f(z)$ is a
bounded linear functional in the Hilbert space $A^2_\om$. Therefore
there exist unique reproducing kernels $B^\om_z\in A^2_\om$ with
$\|L_z\|=\|B^\om_z\|_{A^2_\om}$ such that
    $$
    f(z)=\langle f,B^\om_z\rangle_{A^2_\om}=\int_\D f(\z)\overline{B^\om_z(\z)}\dm(\z),\quad f\in A^2_\om.
    $$
The Bergman projection
    \begin{equation*}\label{intoper}
    P_\om(f)(z)=\int_{\D}f(\z)\overline{B^\om_{z}(\z)}\dm(\z)
    \end{equation*}
is an orthogonal projection from $L^2_\om$ to $A^2_\om$ and it is
closely related to the maximal Bergman projection
    \begin{equation*}\label{intoper+}
    P^+_\om(f)(z)=\int_{\D}f(\z)\left|B^\om_{z}(\z)\right|\dm(\z).
    \end{equation*}

For a positive Borel measure $\om$ on~$[0,1)$, a positive
$(\omega\otimes m)$-integrable function $v$ is called an
$\om$-weight. If $\omega\otimes m$ is the normalized Lebesgue area
measure, then an $\om$-weight is simply called a weight. For
$0<p<\infty$ and an $\om$-weight $v$, the Lebesgue space
$L_\om^{p}(v)$ consists of $f$ such that
    $$
    \|f\|^p_{L_\om^{p}(v)}=\int_{\D}|f(z)|^p v(z)\dm(z)<\infty.
    $$

The boundedness of projections on $L^p$-spaces is an intriguing
topic which presents obvious mathematical difficulties and has
plenty of  applications in operator
theory~\cite{Arrthesis,BB,B,CP2,DHZ,PelRatproj,PottRegueraJFA13,ZeyTams2012,Zhu}.
It is known that for $1<p<\infty$ and $\dm=dA_\a$,
    \begin{equation}\label{twoweight}
    \|P_\omega(f)\|_{L^p_\om(v)}\le C\|f\|_{L^p_\om(v)},\quad f\in L^p_\om(v),
    \end{equation}
if and only if $v$ satisfies the Bekoll\'e-Bonami condition
    \begin{equation}\label{eq:BB}
    \begin{split}
    &B_{p,\alpha}(v)=\sup_{S}
    \frac{ vA_\a(S) \left(v^{\frac{-p'}{p}}A_\a(S)\right)^{\frac{p}{p'}}}
    {\left(A_\a(S))\right)^p}<\infty,
    \end{split}
    \end{equation}
where the supremum is taken over all the Carleson squares $S$ in
$\D$~\cite{BB,B}. In the above result $P_\a$ can be replaced by the
maximal Bergman projection $P^+_\a$~\cite{BB,B}, and
$\|P^+_\a\|\lesssim
B_{p,\alpha}(v)^{\max\left\{1,\frac{1}{p-1}\right\}}$
by~\cite{PottRegueraJFA13}. It is also known~\cite{B,BekCan86,DHZ}
that the weak (1,1) inequality
    \begin{equation*}\label{jap:weak11}
    v A_\a\Bigl(\Bigl\{z\in\D: |P_\a(f)(z)|>\lambda\Bigr\}\Bigr)\le
    C\frac{\|f\|_{L^1_{\a}(v)}}{\lambda}
    \end{equation*}
is equivalent to
    $$
    M_{\alpha}(v)(z)\le C v(z),\quad {\rm a.e.~}z\in\D,
    $$
for the weighted maximal function
    $$
    M_{\alpha}(v)(z)=\sup_{z\in
    D(a,r)}\frac{vA_\a(D(a,r)\cap\D)}{A_\a(D(a,r)\cap\D)},\quad z\in\D.
    $$
Here $D(a,r)$ denotes the Euclidean disc of center $a$ and radius
$r$.

An immediate difficulty in controlling \eqref{twoweight} for a given
measure $\om\in\DD$ is the lack of an explicit expression for the
Bergman kernel $B^\om_z$. Writing $\om_x=\int_0^1r^x\om(r)\,dr$, the
normalized monomials $z^n/\sqrt{2\om_{2n+1}}$
form the standard orthonormal basis of $A^2_\om$, and hence
    \begin{equation}\label{repker}
    B^\om_z(\z)=\sum_{n=0}^\infty\frac{(\z\overline{z})^n}{2\om_{2n+1}},\quad z,\z\in\D.
    \end{equation}
This formula and a decomposition norm theorem was recently used to
obtain a precise estimate for the $L^p_v$-integral of~$B^\om_{z}$
when $v,\om$ are weights in $\DD$~\cite[Theorem~1]{PelRatproj}. With
the aid of these estimates, \eqref{twoweight} was characterized in
the case when $\om$ and $v$ are weights in the class
$\R$~\cite{PelRatproj,PelRatproj2}. A positive Borel measure $\om$
on $[0,1)$ belongs to $\R$, if there exist $C=C(\om)>0$,
$\g=\g(\om)>0$ and $\b=\b(\om)\ge\gamma$ such that
    \begin{equation}
    \begin{split}\label{eq:regular}
    C^{-1}\left(\frac{1-r}{1-t}\right)^{\g}\widehat{\om}(t)\le\widehat{\om}(r)\le C\left(\frac{1-r}{1-t}\right)^{\b}\widehat{\om}(t),\quad 0\le r\le
    t<1.
    \end{split}
    \end{equation}

In view of the above results two immediate questions arise. First,
is it possible to extend the classical Bekoll\'e-Bonami's results to
projections $P_\om$ induced by measures in $\DD$? Second, is it
possible to consider other weights than just those in $\R$ in the
same spirit as in \cite[Theorem~3]{PelRatproj}?

A natural approach to these question is to employ tools from
harmonic analysis. However, it seems that to do so one needs the
Bergman kernel $B^\om_z$ to have some structure. The first result of
this study shows that certain doubling measures induce kernels with
suitable properties for our purposes.

\begin{theorem}\label{th:kernelintegral}
Let $\nu$ be a finite positive measure supported on $[0,1]$ such
that $\int_0^1\frac {d\nu(r)}{1-r}$ diverges. Then there exists
$\om\in\DD$ such that
    $$
    B^\om_z(\z)=\frac1{1-\overline{z}\z}\int_0^1\frac{d\nu(r)}{1-r\overline{z}\z},\quad z,\z\in\D.
    $$
\end{theorem}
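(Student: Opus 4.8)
The plan is to turn the kernel identity into a moment problem and then to solve that moment problem via the theory of Bernstein functions. First I would expand both sides as power series in $w=\overline{z}\z$. By \eqref{repker} the left-hand side equals $\sum_{n\ge0}\frac{w^n}{2\om_{2n+1}}$, while, writing $\nu_j=\int_0^1 r^j\,d\nu(r)$ and summing two geometric series, $\frac1{1-w}\int_0^1\frac{d\nu(r)}{1-rw}=\sum_{n\ge0}M_n\,w^n$, where $M_n=\sum_{j=0}^n\nu_j=\int_0^1\frac{1-r^{n+1}}{1-r}\,d\nu(r)$. Matching coefficients, the assertion is equivalent to producing $\om\in\DD$ whose moments satisfy $\om_{2n+1}=\frac1{2M_n}$ for every $n$. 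Since $\int_0^1\frac{d\nu(r)}{1-r}=\sum_{j\ge0}\nu_j$, the divergence hypothesis says precisely that $M_n\to\infty$, so the prescribed moments tend to $0$, as they must for a measure supported on $[0,1)$.

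The heart of the matter, and what I expect to be the main obstacle, is to realize $(1/M_n)_n$ as a Hausdorff moment sequence. To this end I would interpolate by $M(x)=\int_0^1\frac{1-r^{x+1}}{1-r}\,d\nu(r)$, $x>-1$. Differentiating under the integral sign and substituting $u=-\log r$ gives $M'(x)=\int_0^1 r^{x+1}\frac{-\log r}{1-r}\,d\nu(r)=\int_0^\infty e^{-xu}\,d\widetilde\lambda(u)$ for a positive measure $\widetilde\lambda$, so $M'$ is completely monotone and $M$ is a positive Bernstein function. Consequently $e^{-tM}$ is completely monotone for every $t>0$, and integrating $1/M(x)=\int_0^\infty e^{-tM(x)}\,dt$ exhibits $1/M$ as a Laplace transform $\int_0^\infty e^{-xu}\,d\Sigma(u)$ of a positive measure $\Sigma$ with total mass $\Sigma([0,\infty))=1/M(0)=1/\nu_0<\infty$. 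Pushing $\Sigma$ forward under $u\mapsto s=e^{-u}$ yields a finite positive measure $\eta_0$ on $[0,1)$, carrying no mass at $s=0$ (as $\Sigma$ carries none at $u=\infty$), with $\int_0^1 s^n\,d\eta_0(s)=1/M_n$ for all $n\ge0$.

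Next I would transfer $\eta_0$ to the radial variable through $s=r^2$, defining the radial measure $\om$ on $[0,1)$ by declaring $r\,d\om(r)$ to be the image of $\tfrac12\eta_0$ under $s\mapsto\sqrt{s}$; this is a genuine measure precisely because $\eta_0$ has no atom at $0$. Then $\om_{2n+1}=\int_0^1 r^{2n}\,\bigl(r\,d\om(r)\bigr)=\tfrac12\int_0^1 s^n\,d\eta_0(s)=\frac1{2M_n}$ for every $n\ge0$, and $\om$ is finite since $\om_0=\tfrac12\int_0^1 s^{-1/2}\,d\eta_0(s)=\frac1{2M(-1/2)}<\infty$. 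By construction the reproducing kernel $B^\om_z$ then has exactly the required Taylor coefficients in $w$, which gives the stated integral representation.

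It remains to verify $\om\in\DD$. Because $\nu_j=\int_0^1 r^j\,d\nu(r)$ is nonincreasing in $j$, the elementary estimate $\sum_{j=n+1}^{2n}\nu_j\le n\nu_{n+1}\le(n+1)\nu_n\le M_n$ yields $M_{2n}\le 2M_n$, so the moments of $\eta_0$ obey the doubling relation $(\eta_0)_n\asymp(\eta_0)_{2n}$; by the known characterization of $\DD$ in terms of moments this gives $\eta_0\in\DD$. Finally, since $1-r^2\asymp1-r$ near the boundary one has $\widehat{\om}(r)\asymp\widehat{\eta_0}(r^2)$ for $r$ close to $1$, and combining this with the doubling of $\eta_0$ and the inequality $\bigl(\tfrac{1+r}2\bigr)^2\le\tfrac{1+r^2}2$ transfers the doubling condition to $\om$. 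Hence $\om\in\DD$, completing the argument.
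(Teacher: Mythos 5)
Your proof is correct and follows essentially the same path as the paper's: there, Theorem~\ref{pr:general} likewise reduces the kernel identity to a moment problem for the partial sums $M_n=\int_0^1\frac{1-r^{n+1}}{1-r}\,d\nu(r)$, shows that the interpolating function (the paper uses $F(x)=\int_0^1\frac{1-r^{(x+1/2)/2}}{1-r}\,d\nu(r)$, which builds your square-root substitution and shift directly into the exponent) is Bernstein so that its reciprocal is completely monotone, solves the resulting Hausdorff moment problem, and verifies $\om\in\DD$ by the same moment-doubling estimate together with \cite[Lemma~2.1]{PelSum14}. The only real divergence is bookkeeping: the paper prescribes all moments of $\om$ at once (so finiteness of $\om$ is automatic), whereas your pushforward under $s\mapsto\sqrt{s}$ obliges you to extend the Laplace representation of $1/M$ to $x=-1/2$ to conclude $\om_0<\infty$ --- a correct step, since $M$ is Bernstein on all of $(-1,\infty)$, but one you should justify explicitly.
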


Since each kernel $B^\om_z$ induced by $\om\in\DD$ has the
representation \eqref{repker}, and
    $$
    \frac{1}{1-z}\int_0^1\frac{d\nu(r)}{1-rz}=\sum_{n=0}^\infty\left(\int_0^1\frac{1-r^{n+1}}{1-r}\,d\nu(r)\right)z^n,
    $$
the proof of Theorem~\ref{th:kernelintegral} basically boils down to
solving a Hausdorff moment problem. In Section~\ref{Sec2} we will
prove a more general result from which
Theorem~\ref{th:kernelintegral} immediately follows.

Next we focus on extending the classical Bekolle-Bonami's results
for those measures $\om\in\R$ that induce kernels admitting the
representation
     $$
     B^\om_z(\z)=\frac{1}{(1-\overline{z}\z)^\gamma}\int_0^1\frac{d\nu(r)}{1-r\overline{z}\z},\quad
     z,\z\in\D,
     $$
for some $\gamma\ge 1$. For $1<p<\infty$ and $\om\in\DD$, an
$\om$-weight $v$ belongs $B_{p,\omega}$ if
    $$
    B_{p,\omega}(v)=\sup_{S}\frac{(v\om)(S)}{\om(S)}\left(\frac{\left(v^{-\frac{p'}{p}}\om\right)(S)}{\om(S)}\right)^{\frac{p}{p'}}<\infty.
    $$

\begin{theorem}\label{Theorem:BB-generalized}
Let $1<p<\infty$ and $\om\in\R$ such that $B^\om_z$ admits the
representation
    \begin{equation}\label{Eq:kernel-representation}
    B^\om_z(\z)=\frac{1}{(1-\overline{z}\z)^\gamma}\int_0^1\frac{d\nu(r)}{1-r\overline{z}\z},\quad z,\z\in\D,
    \end{equation}
for some $\gamma\ge1$ and a positive measure $\nu$ supported on
$[0,1]$. For an $\om$-weight $v$, the following statements are
equivalent:
    \begin{enumerate}
    \item[\rm(i)] $P^+_\om:L^p_{\omega}(v)\to L^p_{\omega}(v)$ is bounded;
    \item[\rm(ii)] $P_\om:L^p_{\omega}(v)\to L^p_{\omega}(v)$ is bounded;
    \item[\rm(iii)] $P_\om: L^p_{\omega}(v)\to L^{p,\infty}_{\omega}(v)$ is bounded;
    \item[\rm(iv)] $v\in B_{p,\om}$.
    \end{enumerate}
Moreover,
    $$
    \left\Vert P^+_{\om}\right\Vert_{L^p_{\omega}(v)\to L^p_{\omega}(v)}\lesssim
    B_{p,\omega}(v)^{\max\left\{1,\frac{1}{p-1}\right\}}.
    $$
\end{theorem}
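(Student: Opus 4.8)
The plan is to establish the cyclic chain of implications (i) $\Rightarrow$ (ii) $\Rightarrow$ (iii) $\Rightarrow$ (iv) $\Rightarrow$ (i), and to read off the quantitative norm bound from the last implication. The first two steps are formal. Since
$$|P_\om(f)(z)|=\left|\int_\D f(\z)\overline{B^\om_z(\z)}\dm(\z)\right|\le\int_\D|f(\z)|\,|B^\om_z(\z)|\dm(\z)=P^+_\om(|f|)(z),$$
the boundedness of $P^+_\om$ on $L^p_\om(v)$ forces that of $P_\om$, giving (i) $\Rightarrow$ (ii); and (ii) $\Rightarrow$ (iii) is the trivial embedding $L^p_\om(v)\hookrightarrow L^{p,\infty}_\om(v)$.

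For the necessity (iii) $\Rightarrow$ (iv) I would test the weak-type inequality against $f=v^{-p'/p}\chi_S$ for Carleson squares $S$. Writing $\sigma=v^{-p'/p}$, one computes $\|f\|_{L^p_\om(v)}^p=(\sigma\om)(S)$. The key analytic input is a lower bound for the kernel: using the representation \eqref{Eq:kernel-representation} together with $\om\in\R$, for $z$ and $\z$ in the top half of a common Carleson square one has $\Real B^\om_z(\z)\gtrsim|B^\om_z(\z)|\gtrsim 1/\om(S)$, so that $|P_\om(f)|\gtrsim(\sigma\om)(S)/\om(S)$ on a fixed proportion of $S$. Choosing $\lambda$ comparable to this average and inserting it into the weak-$(p,\infty)$ bound yields $(v\om)(S)\,(\sigma\om)(S)^{p-1}\lesssim\om(S)^p$, which upon rearrangement is exactly the condition $v\in B_{p,\om}$, with $B_{p,\om}(v)$ controlled by the weak-type norm.

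The substance of the theorem lies in the sufficiency (iv) $\Rightarrow$ (i) together with the sharp dependence on $B_{p,\om}(v)$. Here I would first replace the analytic kernel by a positive model. Because $\gamma\ge 1$ and $\om\in\R$, the representation \eqref{Eq:kernel-representation} and the doubling of $\widehat\om$ in \eqref{eq:regular} yield a pointwise estimate of the form $|B^\om_z(\z)|\lesssim 1/(\om\otimes m)(S_{z,\z})$, where $S_{z,\z}$ is the smallest Carleson square containing both $z$ and $\z$: the factor $(1-\overline{z}\z)^{-\gamma}$ supplies the off-diagonal decay (this is where $\gamma\ge 1$ is used), while $\int_0^1 d\nu(r)/(1-r\overline{z}\z)$ is a positive superposition of Cauchy kernels whose modulus is harmless. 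With this estimate in hand, $P^+_\om$ is dominated pointwise by a positive dyadic (sparse) operator adapted to the Bergman tree decomposition of $\D$, whose kernel is constant on dyadic Carleson boxes.

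The final step is the weighted bound for this positive dyadic operator. The condition $v\in B_{p,\om}$ is precisely the $A_p$-type condition relative to the measure $\om\otimes m$, and the now-standard sparse machinery — dualization, the Carleson embedding theorem relative to $\om\otimes m$, and the $A_p$-$A_\infty$ splitting — produces $\|P^+_\om\|_{L^p_\om(v)\to L^p_\om(v)}\lesssim B_{p,\om}(v)^{\max\{1,1/(p-1)\}}$. I expect the principal obstacle to be the first half of this implication rather than the second: since there is no explicit formula for $B^\om_z$, the regularity \eqref{eq:regular} and the integral representation \eqref{Eq:kernel-representation} must be combined carefully to reduce $|B^\om_z|$ to the positive geometric kernel $1/(\om\otimes m)(S_{z,\z})$ before any harmonic-analytic estimate can be brought to bear, and verifying the requisite summability of the dyadic pieces in the doubling-weight setting is the delicate point.
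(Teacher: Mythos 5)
Your implications (i)$\Rightarrow$(ii)$\Rightarrow$(iii) are the paper's, and your sufficiency step (iv)$\Rightarrow$(i) is essentially the paper's route as well: the kernel is dominated by positive dyadic kernels whose value on the minimal common Carleson square $S$ is $\asymp 1/(\om\otimes m)(S)$ (Lemma~\ref{p.pointwise} combined with Lemma~\ref{le:shi1}), and the weighted bound $B_{p,\om}(v)^{\max\{1,1/(p-1)\}}$ for the dyadic model is then proved by duality and weighted maximal function estimates \`a la Pott--Reguera (Theorem~\ref{th:sufp>1}, Corollary~\ref{co:sufp>1}); invoking sparse technology instead is a cosmetic difference.

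The genuine gap is in (iii)$\Rightarrow$(iv). You claim that for $z,\z$ in the top half of a common Carleson square one has $\Real B^\om_z(\z)\gtrsim|B^\om_z(\z)|$. This is false for the kernels \eqref{Eq:kernel-representation}: take $\nu=\delta_0$, so that $B^\om_z(\z)=(1-\overline{z}\z)^{-\gamma}$ is the standard kernel. For pairs $z=\r e^{i\t},\ \z=\r$ in the top half of a square of side $\ell$ with angular separation $\t\asymp\ell$ and $1-\r\asymp\ell$, the argument of $1-\overline{z}\z$ is comparable to a fixed absolute constant (not small), hence $\arg B^\om_z(\z)\asymp\gamma\cdot\mathrm{const}$ exceeds $\pi/2$, and indeed winds arbitrarily, once $\gamma$ is moderately large; on a substantial set of such pairs the real part of the kernel is negative. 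Your modulus bound $|B^\om_z(\z)|\gtrsim1/\om(S)$ is correct (it follows from \eqref{4} and Lemma~\ref{le:shi1}), but without control of the oscillation it does not give a lower bound for $|P_\om(\sigma 1_S)(z)|$: cancellation in $\int_S\sigma\overline{B^\om_z(\z)}\dm(\z)$ can destroy the average. A second, related defect: even where positivity did hold you would only capture $(v\om)$ of a \emph{fixed proportion} of $S$, and since $v\om$ need not be doubling this does not yield the full $B_{p,\om}$ constant.

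The paper's Proposition~\ref{pr:weakpp>1} is built precisely to avoid this. One tests on a square $S_1$ and evaluates $P_\om$ on a second square $S_2$ with $\ell(S_2)=\ell(S_1)$ and $\dist(S_1,S_2)\asymp\ell(S_1)$, freezing the kernel at the center $\z_0$ of $S_1$: the separation makes the error term $\int_{S_1}f\,|B^\om_{\z}(z)-B^\om_{\z_0}(z)|\dm(\z)$ at most half of the main term $|B^\om_{\z_0}(z)|\int_{S_1}f\dm$ by the difference estimate of Lemma~\ref{Lemma:pointwise}, which together with the modulus lower bound gives Lemma~\ref{le:poinwisestimate}. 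This produces only the mixed two-square condition $(\sigma\om)(S_1)^{p-1}(v\om)(S_2)\lesssim\om(S_1)^p$; the paper then swaps the roles of $S_1$ and $S_2$, multiplies the two inequalities, and uses H\"older's lower bound $(v\om)(S)(\sigma\om)(S)^{p-1}\ge\om(S)^p$ to extract $v\in B_{p,\om}$. (A minor further point: your test function $v^{-p'/p}1_S$ need not lie in $L^p_\om(v)$ a priori; the paper tests with the truncations $1_S\min\{n,v^{-p'/p}\}$ and passes to the limit by Fatou's lemma.) Without these devices your necessity argument does not go through.
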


To prove (iii)$\Rightarrow$(iv) in
Theorem~\ref{Theorem:BB-generalized}, we estimate
$|B^\om_{z_0}(\z)-B^\om_{z}(\z)|$ upwards for suitable chosen
$z,z_0,\z$, and we also establish the useful relation
    $$
    \int_0^1\frac{d\nu(r)}{1-rx}\asymp\frac{(1-x)^{\gamma-1}}{\widehat{\om}(x)},\quad x\in[0,1),
    $$
for the measures $\nu$ and $\om$. The proof of (iv)$\Rightarrow$(i)
is based on known ideas of controlling $P^+_\om$ by two discrete
dyadic operators \cite{GarJones82,Mei03,PottRegueraJFA13}, and it is
done in the case of a more general operator.
Theorem~\ref{Theorem:BB-generalized} is proved in
Section~\ref{Sec3}.

Now we turn to study of the weak $(1,1)$-inequality. For a positive
Borel measure $\om$ on $[0,1)$, the weighted maximal function of
$f\in L^1_\om$ is
    \[
    M_{\om}(f)(z)=\sup_{z\in D(a,r)}\frac{\int_{D(a,r)\cap\D}
    |f(\z)|\dm(\z)}{\om\Bigl(D(a,r)\cap\D \Bigr)},\quad z\in\D.
    \]
A non-negative function $v\in L^1_{\om,{\rm loc}}$ belongs to
$B_{1,\om}$ if there exists a constant $C=C(v,\om)>0$ such that
    $$
    M_\om(v)(z)=\sup_{a:z\in D(a,r)}\frac{\int_{D(a,r)\cap\D}
    v(\z)\dm(\z)}{\om(D(a,r)\cap\D)}\le Cv(z)
    $$
for almost every $z\in\D$. The infimum of such constants is denoted
by $B_{1,\om}(v)$. In order to obtain the weak $(1,1)$-inequality we
use the classical Calder\'on-Zygmund decomposition for functions in
$L^1_\om$. This causes the extra hypothesis on $\om$ appearing in
the statement of the following result, the proof of which is given
in Section~\ref{Sec4}.

\begin{theorem}\label{th:w11}
Let $\om\in\R$ be such that
$\om([a,b])\asymp\om\left(\left[a,\frac{a+b}2\right]\right)\asymp\om\left(\left[\frac{a+b}2,b\right]\right)$
for all $0\le a,b\le1$ and $B^\om_z$ admits the representation
\eqref{Eq:kernel-representation} for some $\gamma\ge1$ and a
positive measure $\nu$ supported on $[0,1]$. For a $\omega$-weight
$v$,  the following statements are equivalent:
\begin{enumerate}
\item[\rm(i)] $P^+_\om: L^1_{\omega}(v)\to L^{1,\infty}_{\omega}(v)$ is bounded;
\item[\rm(ii)] $P_\om: L^1_{\omega}(v)\to L^{1,\infty}_{\omega}(v)$ is bounded;
\item[\rm(iii)] $v\in B_{1,\om}$.
\end{enumerate}
\end{theorem}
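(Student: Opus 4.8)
The plan is to establish the cyclic chain of implications (iii)$\Rightarrow$(i)$\Rightarrow$(ii)$\Rightarrow$(iii). The step (i)$\Rightarrow$(ii) is immediate from the pointwise domination $|P_\om f(z)|\le P^+_\om(|f|)(z)$, which transfers any weak-type bound from the maximal projection to the projection itself. The genuinely new content is therefore concentrated in the necessity (ii)$\Rightarrow$(iii) and, above all, in the sufficiency (iii)$\Rightarrow$(i) for the \emph{positive} operator $P^+_\om$.

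For (ii)$\Rightarrow$(iii) I would recover the condition $M_\om(v)\le C v$ a.e. by testing the weak-type inequality on functions concentrated near an arbitrary point $z_0$. Fixing a disc $D=D(a,r)\cap\D$ containing $z_0$ and taking $f_\e=\chi_{D(z_0,\e)}/\om(D(z_0,\e))$, I would use the kernel lower bound $|B^\om_\z(z_0)|\gtrsim 1/\om(D)$ for $\z\in D$, together with the smoothness estimate for $|B^\om_{z_0}(\z)-B^\om_z(\z)|$ already exploited in the proof of Theorem~\ref{Theorem:BB-generalized}, to guarantee that $|P_\om f_\e|$ stays comparable to $1/\om(D)$ on a fixed portion of $D$ of $\om$-measure $\gtrsim\om(D)$. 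Inserting this into the weak $(1,1)$ inequality and letting $\e\to 0$ would yield $(v\om)(D)/\om(D)\lesssim v(z_0)$ for almost every $z_0$ by the Lebesgue differentiation theorem, which is exactly $v\in B_{1,\om}$. Here the relation $\int_0^1 d\nu(r)/(1-rx)\asymp(1-x)^{\gamma-1}/\widehat{\om}(x)$ is what makes the kernel bounds explicit enough to run the argument.

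The heart of the matter is (iii)$\Rightarrow$(i). First I would invoke the domination of $P^+_\om$ by the finite family of positive dyadic Bergman-tree operators constructed in Section~\ref{Sec3}, reducing the claim to a weak $(1,1)$ bound for a single such operator $T_{\mathcal D}$. To this operator I would apply the Calder\'on--Zygmund decomposition of $f$ in $L^1_\om$ at height $\lambda$; this is precisely where the hypothesis $\om([a,b])\asymp\om([a,\frac{a+b}2])\asymp\om([\frac{a+b}2,b])$ enters, since it forces the children of each dyadic cube to carry comparable $\om$-mass and hence lets the bisection stopping-time produce cubes $Q_j$ with $\lambda<\om(Q_j)^{-1}\int_{Q_j}|f|\dm\le C\lambda$ and $|f|\le\lambda$ off $\Omega=\bigcup_j Q_j$. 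Writing $f=g+b$ and discarding a controlled enlargement $\Omega^*$ of $\Omega$, the $B_{1,\om}$ condition controls $(v\om)(\Omega^*)$ through the chain $(v\om)(Q_j)\lesssim\om(Q_j)\,\essinf_{Q_j}v\lesssim\lambda^{-1}\int_{Q_j}|f|\,v\dm$, where the first inequality uses $(v\om)(Q_j)/\om(Q_j)\le\essinf_{Q_j}M_\om v\le C\essinf_{Q_j}v$ and the second uses the stopping inequality. For the good part, since $B_{1,\om}\subset B_{2,\om}$, Theorem~\ref{Theorem:BB-generalized} gives $P^+_\om$ bounded on $L^2_\om(v)$; combined with $\|g\|_\infty\lesssim\lambda$ and $\|g\|_{L^1_\om(v)}\lesssim\|f\|_{L^1_\om(v)}$ this yields the required bound via Chebyshev's inequality.

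The remaining step, which I expect to be the main obstacle, is the bad part. Because $P^+_\om$ and its dyadic models are positive, the cancellation $\int b_j\dm=0$ that drives the classical proof is unavailable, so I cannot rely on a H\"ormander-type kernel-smoothness estimate. Instead I would exploit the tent structure of $T_{\mathcal D}$: for $z$ outside $\Omega^*$ only the ancestors of $Q_j$ containing $z$ contribute, and summing the resulting geometric series in the doubling measure $\om$ gives a tail bound $T_{\mathcal D}(|b_j|)(z)\lesssim \om(\widehat R)^{-1}\int_{Q_j}|b_j|\dm$, where $\widehat R$ is the smallest Carleson box containing both $z$ and $Q_j$. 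Integrating against $v\dm$ over $\Omega^{*c}$, splitting into the annuli on which $\om(\widehat R)$ is of fixed dyadic size, and using the $B_{1,\om}$ property of $v$ to compare weighted and unweighted masses of these annuli is the delicate computation; carrying it out and summing over $j$ should produce $\sum_j\int_{\Omega^{*c}}T_{\mathcal D}(|b_j|)\,v\dm\lesssim B_{1,\om}(v)\,\|f\|_{L^1_\om(v)}$, after which Chebyshev's inequality closes the argument. Verifying that this tail sum genuinely converges, i.e.\ that the Bergman tent geometry supplies the decay that smoothness would have provided in the Euclidean setting, is the crux on which the whole implication rests.
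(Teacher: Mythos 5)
Your reductions (i)$\Rightarrow$(ii) and your testing scheme for (ii)$\Rightarrow$(iii) are sound and close to the paper's (the paper tests on pairs of separated, equal-size Carleson squares via Lemma~\ref{le:poinwisestimate} and then transfers to discs, rather than on concentric bumps), and your good-part estimate is exactly the paper's. The gap is in the bad part of (iii)$\Rightarrow$(i), and it is a genuine one. Your premise that the cancellation $\int_{Q_k}b\dm=0$ is ``unavailable'' because $P^+_\om$ is positive is a misconception: $P^+_\om$ is a \emph{linear} operator whose \emph{kernel} $|B^\om_z(\z)|$ is positive, so mean zero can still be played against the kernel. This is precisely what the paper does: since $\int_{Q_k}b\dm=0$,
\begin{equation*}
P^{+}_\om(b_k)(z)=\int_{Q_k}b(\z)\left(|B^\om_z(\z)|-|B^\om_z(z_k)|\right)\dm(\z),
\end{equation*}
and $\bigl||B^\om_z(\z)|-|B^\om_z(z_k)|\bigr|\le|B^\om_z(\z)-B^\om_z(z_k)|$, after which the integrated H\"ormander-type estimate of Lemma~\ref{Lemma:local-integrated} (built from Lemma~\ref{Lemma:pointwise} and Lemma~\ref{le:shi1}) bounds the integral of $|B^\om_z(\z)-B^\om_z(z_k)|v(z)$ off the doubled discs by $\inf_a M_\om(v)(a)$, and $B_{1,\om}$ closes the argument. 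Note also that the paper does this directly for $P^+_\om$, without passing to a dyadic model at all in this step.

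By discarding the cancellation and estimating $T_{\mathcal D}(|b_j|)$ instead, you doom the argument quantitatively: the tail you propose to sum has no decay across scales. For $z$ off the shadow region, the surviving terms are indexed by the ancestors $I^{(k)}\supseteq I_j$ of the arc over $Q_j$ with $z\in S(I^{(k)})$, and after integrating in $z$ the $k$-th scale contributes about $\frac{(v\om)(S(I^{(k)}))}{\om(S(I^{(k)}))}\int_{Q_j}|b_j|\dm$; each such term is only bounded by $\inf_{Q_j}M_\om(v)\cdot\int_{Q_j}|b_j|\dm$, uniformly in $k$, and there are $\asymp\log\frac{1}{|I_j|}$ scales. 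Already for $v\equiv1$ (which belongs to $B_{1,\om}$ with constant $1$) the left-hand side of your target inequality $\sum_j\int_{\D\setminus\Omega^*}T_{\mathcal D}(|b_j|)v\dm\lesssim B_{1,\om}(v)\|f\|_{L^1_\om(v)}$ grows like $\sum_j\bigl(\log\frac{1}{|I_j|}\bigr)\int_{Q_j}|b_j|\dm$, so that inequality is false; the convergence you flag as ``the crux'' genuinely fails. The dyadic route can be repaired, but only by keeping the mean-zero property: for dyadic $I\supseteq I_j$ one has $S(I)\supseteq Q_j$, hence $\int_{S(I)}b_j\dm=\int_{Q_j}b_j\dm=0$ and every ancestor term vanishes identically, while every term with $I\subsetneq I_j$ is supported in $S(I_j)$; taking $\Omega^*=\cup_j S(I_j)$, whose $(v\om)$-measure is controlled by $B_{1,\om}$, the doubling hypothesis $\om(Q_j)\asymp\om(S(I_j))$ and the stopping inequality, the bad part then vanishes identically off $\Omega^*$. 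With that correction your dyadic scheme would be a legitimate alternative to the paper's smoothness-based argument; as written, it does not prove the implication.
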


In Theorems~\ref{Theorem:BB-generalized} and \ref{th:w11} one of the
essential hypothesis is $\om\in\R$ while
Theorem~\ref{th:kernelintegral} concerns measures in $\DD$. However,
if $\gamma$ appearing in \eqref{Eq:kernel-representation} is
strictly larger than one, then $\om\in\R$ by Lemma~\ref{le:shi1}
below. It is also worth noticing that kernels admitting the
representation \eqref{Eq:kernel-representation} with $\gamma=1$ and
their connection to logarithmically subharmonic weights have been
discussed earlier in \cite{ShimorinCV02}, and the starting point for
our consideration towards Theorem~\ref{th:kernelintegral} has
similarities with arguments used there.

The two-weight inequality $\|P^+(f)\|_{L^p_u}\le C\|f\|_{L^p_v}$ was
recently characterized in terms of testing conditions on the
indicators of Carleson squares~\cite{AlPoRe}. The last of our main
results offers a generalization of this result to the class of
radial weights with kernels of the form
\eqref{Eq:kernel-representation}. We write $1_{E}$ for the
characteristic function of the set $E$, and write $\M_h$ for the
multiplication operator $\M_h(f)=fh$.

\begin{theorem}\label{th:P+logsub}
Let $1<p<\infty$, and let $\om$ be a finite positive measure on
$[0,1]$ such that $B^\om_z$ admits the representation
\eqref{Eq:kernel-representation} for some $\gamma\ge1$ and a
positive measure $\nu$ supported on $[0,1]$. Let $v,u$ be
$\om$-weights and denote $\sigma=v^{1-p'}$. Then $P^{+}_{\om}:
L^p_\om(v)\to L^p_\om(u)$ is bounded if and only if there exist
constants $C_0=C_0(p,v,u,\omega)>0$ and
$C^\star_0=C^\star_0(p,v,u,\omega)>0$ such that
    \begin{equation}\label{j1lg}
    \left\|\M_{u^{1/p}}P^{+}_{\om}\M_{\sigma^{1/{p'}}}(1_{S}\sigma^{1/p})\right\|_{L^p_\om}\le C_0\left\|1_{S}\sigma^{1/p}\right\|_{L^p_\om}
    \end{equation}
and
    \begin{equation}\label{j2lg}
    \left\|\M_{\sigma^{1/{p'}}}P^{+}_{\om}\M_{u^{1/p}}(1_{S}u^{1/p'})\right\|_{L^{p'}_\om}\le C^\star_0\left\|1_{S}u^{1/p'}\right\|_{L^{p'}_\om}
    \end{equation}
for all Carleson squares $S\subset\D$. Moreover, there exists a
constant $C_1=C_1(p,\om)>0$ such that
    $$
    \left\|P^{+}_{\om}\right\|_{L^p_\om(v)\to L^p_{\om}(u)}\le C_1(C_0+C^\star_0).
    $$
\end{theorem}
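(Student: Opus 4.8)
The plan is to prove the final statement (Theorem~\ref{th:P+logsub}), which characterizes the two-weight inequality $P^+_\om: L^p_\om(v)\to L^p_\om(u)$ via Sawyer-type testing conditions on Carleson squares. Let me sketch how I would approach this.

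The plan is to reduce the two-weight inequality for $P^+_\om$ to the boundedness of a single positive integral operator on an unweighted space, and then to invoke (an adaptation of) the Sawyer testing theorem for positive dyadic operators. First I would record the algebraic reduction. Writing $\tilde T=\M_{u^{1/p}}P^+_\om\M_{\sigma^{1/p'}}$ and substituting $f=\sigma^{1/p'}g$, one checks using $\sigma^{p/p'}v=1$ that $\|f\|_{L^p_\om(v)}=\|g\|_{L^p_\om}$ and $\|P^+_\om f\|_{L^p_\om(u)}=\|\tilde Tg\|_{L^p_\om}$, so that $P^+_\om:L^p_\om(v)\to L^p_\om(u)$ is bounded if and only if $\tilde T:L^p_\om\to L^p_\om$ is bounded, with equal norms. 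Since the kernel $|B^\om_z(\z)|=|B^\om_\z(z)|$ is symmetric, $P^+_\om$ is self-adjoint with respect to the $L^p_\om$--$L^{p'}_\om$ duality, whence $\tilde T^{*}=\M_{\sigma^{1/p'}}P^+_\om\M_{u^{1/p}}$. Testing $\tilde T$ on $1_S\sigma^{1/p}$ and $\tilde T^{*}$ on $1_Su^{1/p'}$, and using $\sigma^{1/p}\sigma^{1/p'}=\sigma$ and $u^{1/p}u^{1/p'}=u$, shows that \eqref{j1lg} and \eqref{j2lg} are exactly the forward and backward Sawyer testing conditions for $\tilde T$. This makes necessity immediate and identifies precisely what must be proved for sufficiency.

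The heart of the matter is to show that these two testing conditions force boundedness. Here I would exploit the representation \eqref{Eq:kernel-representation} together with the size estimate $\int_0^1\frac{d\nu(r)}{1-rx}\asymp\frac{(1-x)^{\gamma-1}}{\widehat\om(x)}$ to establish the pointwise bound $|B^\om_z(\z)|\asymp \frac{1}{\om(S_{z\z})}$, where $S_{z\z}$ is the smallest Carleson square containing both $z$ and $\z$ and $\om(S_{z\z})$ denotes its $\om\otimes m$-measure. Indeed, combining the two gives $|B^\om_z(\z)|\asymp\big(|1-\overline z\z|\,\widehat\om(1-|1-\overline z\z|)\big)^{-1}$, and the right-hand side is comparable to $\om(S_{z\z})^{-1}$. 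This is precisely the structural property that lets one dominate $P^+_\om$ from above by finitely many positive dyadic operators $T_{\mathcal{D}_j}$, built from $\om\otimes m$-averages over dyadic Carleson squares across boundedly many shifted grids, and conversely lets one dominate each dyadic kernel by $|B^\om_z(\z)|$ on the corresponding square. The latter domination lets the conditions \eqref{j1lg}--\eqref{j2lg}, stated over all Carleson squares, be transferred to dyadic testing conditions for each $T_{\mathcal{D}_j}$, since every dyadic Carleson square is itself a Carleson square and $T_{\mathcal{D}_j}(\sigma 1_{S})\lesssim P^+_\om(\sigma 1_{S})$ pointwise.

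With the dyadic domination in hand, I would apply the Sawyer testing theorem for positive dyadic operators, in the form used for the Bergman projection in \cite{AlPoRe,PottRegueraJFA13}: the forward and backward dyadic testing conditions are equivalent to the boundedness of $T_{\mathcal{D}_j}:L^p_\om(v)\to L^p_\om(u)$, with a quantitative bound $\|T_{\mathcal{D}_j}\|\lesssim C_0+C^\star_0$. Summing over the finitely many grids and returning through the reduction of the first step yields the assertion and the estimate $\|P^+_\om\|_{L^p_\om(v)\to L^p_{\om}(u)}\le C_1(C_0+C^\star_0)$.

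The main obstacle I anticipate is the dyadic-domination step adapted to the general radial measure $\om\otimes m$ rather than to area measure. This requires a complex-variable version of the relation between $\nu$ and $\widehat\om$ in order to control $|B^\om_z(\z)|$ off the diagonal uniformly, a choice of dyadic grids on $\D$ that resolves every Carleson square up to a bounded dilation, and attention to the case $\gamma=1$, where $\om$ need not lie in $\R$ and the regularity of $\widehat\om$ that is used to sum the dyadic kernel is weaker. Provided the kernel estimate is uniform, the remaining steps are the standard positive-operator machinery and the bookkeeping needed to track the constants $C_0$ and $C^\star_0$.
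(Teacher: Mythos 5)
Your overall architecture coincides with the paper's: reduce via \eqref{equivnorms} to unweighted boundedness of $\M_{u^{1/p}}P^{+}_{\om}\M_{\sigma^{1/p'}}$, use self-adjointness of $P^{+}_{\om}$ to identify \eqref{j1lg}--\eqref{j2lg} as the forward and dual Sawyer testing conditions (so necessity is immediate), and for sufficiency compare $P^{+}_{\om}$ pointwise, in both directions, with positive dyadic operators over finitely many shifted grids and invoke a testing theorem for positive dyadic operators. The paper does exactly this, with two grids $\mathcal D^{0},\mathcal D^{1/2}$, the kernel comparison packaged as Lemma~\ref{p.pointwise}, and the dyadic testing theorem proved self-containedly as Theorem~\ref{th:maind} (Treil-type stopping squares), specialized in Corollary~\ref{c.twodB}.

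The genuine gap is in your kernel-comparison step. You base the dyadic domination on $|B^\om_z(\z)|\asymp\om(S_{z\z})^{-1}$, obtained by combining $\bigl|\int_0^1\frac{d\nu(r)}{1-r\overline{z}\z}\bigr|\asymp\int_0^1\frac{d\nu(r)}{1-r(1-|1-\overline{z}\z|)}$ with the relation $\int_0^1\frac{d\nu(r)}{1-rx}\asymp(1-x)^{\gamma-1}/\widehat{\om}(x)$ of Lemma~\ref{le:shi1}. But Lemma~\ref{le:shi1} is proved only for $\om\in\DD$ (its proof rests on $\|B^\om_z\|^2_{A^2_\om}\asymp\bigl((1-|z|)\widehat{\om}(|z|)\bigr)^{-1}$, a $\DD$-result), and the further comparison $\bigl(|1-\overline{z}\z|\,\widehat{\om}(1-|1-\overline{z}\z|)\bigr)^{-1}\asymp\om(S_{z\z})^{-1}$ across bounded dilations of scale again needs doubling of $\widehat{\om}$. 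Theorem~\ref{th:P+logsub} assumes none of this: $\om$ is merely a finite positive measure on $[0,1]$ admitting \eqref{Eq:kernel-representation}; for instance $\nu=\delta_0$, $\gamma=1$ gives the Hardy kernel and $\om$ a point mass at $1$, which lies outside $\DD$. So, as written, your sufficiency argument covers only $\om\in\DD$ (essentially the setting of Theorem~\ref{Theorem:BB-generalized}); you flag this yourself as an obstacle but do not resolve it. The paper's fix, which you could adopt within your scheme, is to keep $\om$ out of the comparison entirely: write $B^\om_z(\z)=\Psi(1-\overline{z}\z)/(1-\overline{z}\z)$ with $\Psi(w)=w^{1-\gamma}\int_0^1\frac{d\nu(r)}{1-r(1-w)}$, take the dyadic coefficients to be $\Psi(|I|)/|I|$ rather than $1/\om(S(I))$, and verify that $\Psi$ is decreasing on $(0,2)$ (since $\gamma\ge1$), doubling, conjugation-symmetric, and satisfies $|\Psi(1-z)|\asymp\Psi(|1-z|)$ by Lemma~\ref{Lemma:nu}; all of these use only $\nu$ and $\gamma$, so Lemma~\ref{p.pointwise} and Theorem~\ref{th:maind} apply with no regularity hypothesis on $\om$ whatsoever, and since the testing coefficients on both sides then involve the same $\Psi$, the transfer of \eqref{j1lg}--\eqref{j2lg} to the dyadic testing conditions and back is loss-free.
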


Theorem~\ref{th:P+logsub} is deduced from a more general result in
Section~\ref{Sec5}.

\section{Integral formula for the Bergman kernel}\label{Sec2}

The solution of the Hausdorff moment problem says that for a given
sequence $\{m_n\}_{n=0}^\infty$ of positive numbers there exists a
positive Borel measure supported on $[0,1]$ such that
\begin{equation}\label{momentproblem}
m_n=\int_0^1 s^n\,d\mu(s),\quad n\in\N\cup\{0\},
\end{equation}
if and only if, the sequence is completely monotonic i.e. $(-1)^k
(\Delta^k m)_n\ge 0$, where $(\Delta m)_n=m_{n+1}-m_n$ is the
discrete difference operator and
    $$
    (\Delta^k m)_n=(\Delta\Delta^{k-1}m)_n=(\Delta^{k-1}m)_{n+1}-(\Delta^{k-1} \Delta m)_n,\quad k\in\N\setminus\{1\}.
    $$
A function $f$ is completely monotonic on $[0,\infty)$, if
    $$
    (-1)^{k}f^{(k)}(x)\ge 0,\quad x>0,\quad k\in\N\cup\{0\},
    $$
and $f:[0,\infty)\to[0,\infty)$ is Bernstein, if
    $$
    (-1)^{k}f^{(k)}(x)\le 0,\quad x>0,\quad k\in \N.
    $$
The first two of the following basic properties are easy to verify,
for the third and fourth ones, see~\cite[Theorem~3.7]{Schibook}
and~\cite[Theorem~1]{WidTAMS31}, respectively:
\begin{itemize}
\item[\rm(1)] If $f_1$ and $f_2$ are completely monotonic functions, so are $f_1+f_2$ and $f_1f_2$;
\item[\rm(2)] If $f_1$ and $f_2$ are  Bernstein functions, so is $f_1+f_2$;
\item[\rm(3)] If $f_1$ is completely monotonic and $f_2$ is a Bernstein function, then $f_1\circ f_2$ is a completely monotonic function;
\item[\rm(4)] If $f$ is completely monotonic, then $\{f(a+n)\}_{n=0}^\infty$ is a completely monotonic sequence for each
$a>0$.
\end{itemize}

Theorem~\ref{th:kernelintegral} follows from the following result.

\begin{theorem}\label{pr:general}
Let $F:[0,\infty)\to (0,\infty)$ be a $C^\infty$-function and
$\varphi(z)=\sum_{n=0}^\infty\widehat{\varphi}(n)z^n $ a non-trivial
analytic function such that $1/F$ is completely monotonic and
$F(a+2n+1)=\sum_{j=0}^{n}\widehat{\varphi}(j)$ for some
$a\in(0,\infty)$ and all $n\in\N\cup\{0\}$. Then there exists a
positive Borel measure $\om$ on $[0,1]$ such that
    \begin{equation}\label{repre3}
    \frac{\varphi(z)}{1-z}=\sum_{k=0}^\infty\frac{z^{k}}{2\om_{2k+1}},\quad z\in\D.
    \end{equation}
Moreover, if $\lim_{n\to \infty}F(a+2n+1)=\infty$,
$F\left(a+2n\right)\lesssim F\left(a+n\right)$ and there exists a
positive constant $M>1$ such that $\lim_{n\to
\infty}\frac{M^n}{F(a+n)}=\infty$, then $\om\in\DD$ and
    $$
    \frac{\varphi(\overline{z}\z)}{1-\overline{z}\z}=B_z^\om(\z),\quad \z,z\in\D.
    $$
\end{theorem}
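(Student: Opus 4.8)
The plan is to reduce the identity \eqref{repre3} to a moment‑matching problem, solve that problem with the Hausdorff moment theorem, and then read off both the membership $\om\in\DD$ and the kernel formula from the moment data. With the normalization $\om_x=\int_0^1 r^x\,d\om(r)$ used in \eqref{repker}, the identity \eqref{repre3} demands a positive measure $\om$ on $[0,1]$ whose moments satisfy $\frac{1}{2\om_{2k+1}}=\sum_{j=0}^k\widehat\varphi(j)=F(a+2k+1)$ for every $k$; indeed, multiplying the series for $\varphi$ by $\frac{1}{1-z}$ gives $\frac{\varphi(z)}{1-z}=\sum_{k}\left(\sum_{j=0}^k\widehat\varphi(j)\right)z^k$, so matching coefficients against $\sum_k\frac{z^k}{2\om_{2k+1}}$ is exactly this moment condition. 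To produce such an $\om$, I would use that $1/F$ is completely monotonic: property~(4) then shows that the sequence $\{1/F(a+n)\}_{n=0}^\infty$ is a completely monotonic sequence of positive numbers, so by the solution \eqref{momentproblem} of the Hausdorff moment problem there is a positive Borel measure $\mu$ on $[0,1]$ with $\int_0^1 s^n\,d\mu(s)=1/F(a+n)$ for all $n$. A suitable fixed constant multiple $\om$ of $\mu$ then satisfies $\om_n=\frac{1}{2F(a+n)}$ for every $n\ge0$; in particular $\frac{1}{2\om_{2k+1}}=F(a+2k+1)=\sum_{j=0}^k\widehat\varphi(j)$, which is precisely \eqref{repre3} by the coefficient computation above. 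This settles the first assertion.

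For the \emph{moreover} part I must upgrade $\om$ to a member of $\DD$. The hypothesis $\lim_n F(a+2n+1)=\infty$ means $\om_{2n+1}\to0$, and since $s\mapsto s^n$ is decreasing on $[0,1]$ the full sequence $\om_n$ decreases to $\om(\{1\})=\lim_n\om_n=0$ by dominated convergence; integrating by parts against $\widehat\om(s)=\om([s,1])$ then yields $\om_n=\int_0^1 n s^{n-1}\widehat\om(s)\,ds$ with no boundary term. The easy half of the moment–tail comparison is immediate, $\om_n\ge(1-1/n)^n\widehat\om(1-1/n)\asymp\widehat\om(1-1/n)$, from $\om_n\ge\int_{1-1/n}^1 s^n\,d\om(s)$. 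The substantive point is the reverse comparison together with the passage to the doubling inequality $\widehat\om(r)\lesssim\widehat\om\!\left(\frac{1+r}{2}\right)$. Here the hypothesis $F(a+2n)\lesssim F(a+n)$ translates into the moment‑doubling $\om_n\asymp\om_{2n}$, which forces the mass of $\om$ to concentrate near $r=1$ and thereby controls the bulk contribution $\int_0^{1-1/n}s^n\,d\om(s)$ by the tail $\widehat\om(1-1/n)$, while $\lim_n M^n/F(a+n)=\infty$, i.e.\ $\om_n\gg M^{-n}$, rules out exponential decay of the moments and keeps $\widehat\om$ strictly positive on $[0,1)$.

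I expect this transfer from moment‑doubling to tail‑doubling to be the main obstacle. The naive splitting of $\om_n$ at $r=1-1/n$ bounds the bulk only by a constant, so the argument genuinely requires applying the comparison $\om_n\asymp\om_{2n}$ across the dyadic scales $r=1-2^{-k}$ and summing the resulting geometric‑type estimates; the role of the growth condition $\om_n\gg M^{-n}$ is to guarantee that these scales do not run out before reaching $r=1$. Once $\om\in\DD$ is established the conclusion is immediate: the reproducing‑kernel expansion \eqref{repker} becomes available, so $B^\om_z(\z)=\sum_{n=0}^\infty\frac{(\z\overline z)^n}{2\om_{2n+1}}$, and substituting $w=\overline z\z$ (with $|w|<1$ whenever $z,\z\in\D$) into \eqref{repre3} gives $\sum_n\frac{w^n}{2\om_{2n+1}}=\frac{\varphi(w)}{1-w}$, whence $B^\om_z(\z)=\frac{\varphi(\overline z\z)}{1-\overline z\z}$ as claimed; convergence is guaranteed since $\varphi$ and $\frac{1}{1-w}$ are analytic on $\D$.
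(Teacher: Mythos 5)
Your first half is sound and coincides with the paper's argument: property (4) applied to the completely monotonic function $1/F$ produces, via the Hausdorff moment problem, a measure with $\om_m=\frac{1}{2F(a+m)}$ for all $m$, and the Cauchy product $\frac{\varphi(z)}{1-z}=\sum_k\bigl(\sum_{j=0}^k\widehat\varphi(j)\bigr)z^k$ then gives \eqref{repre3}. The final step (reading off $B^\om_z$ from \eqref{repker} once $\om\in\DD$ is known) also matches the paper.

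The genuine gap is in the \emph{moreover} part: you never establish $\om\in\DD$. You correctly reduce the problem to the implication ``moment doubling $\om_n\lesssim\om_{2n}$ plus the growth condition $\om_n\gg M^{-n}$ imply the tail doubling $\widehat\om(r)\lesssim\widehat\om\bigl(\tfrac{1+r}{2}\bigr)$,'' but then you explicitly call this transfer ``the main obstacle'' and offer only a plan (iterate across dyadic scales and sum geometric estimates) rather than a proof. This step is not routine: splitting $\om_x$ at $\tfrac{1+r}{2}$ and iterating the moment doubling $k$ times yields only $\widehat\om(r)\lesssim C^k e^{-2^{k-1}}\om([0,1])+C^k\,\widehat\om\bigl(\tfrac{1+r}{2}\bigr)$, and the additive term proportional to $\om([0,1])$ cannot be absorbed as $r\to1$ since $\widehat\om\bigl(\tfrac{1+r}{2}\bigr)\to0$; letting $k$ depend on $r$ makes the multiplicative constant $C^k$ blow up. Indeed, some non-degeneracy is indispensable ($\om=\delta_0$ satisfies moment doubling but is not in $\DD$), and your side claim that $\om_n\gg M^{-n}$ alone ``keeps $\widehat\om$ strictly positive on $[0,1)$'' is also not correct: by itself it only forces $\widehat\om(1/M)>0$, and positivity on all of $[0,1)$ requires combining this with the doubling. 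The paper closes exactly this hole by verifying three concrete facts --- $\om(\{1\})=0$ from $F(a+2n+1)\to\infty$ (its \eqref{1n}), the non-degeneracy $\widehat\om(0)\le C\widehat\om(1-\tfrac1m)$ with $1-\tfrac1m=\tfrac1M$, proved by contradiction from $\lim_n M^n/F(a+n)=\infty$ (its \eqref{m}), and $\om_n\lesssim\om_{2n}$ from $F(a+2n)\lesssim F(a+n)$ --- and then invoking the known characterization \cite[Lemma~2.1]{PelSum14} of $\DD$ in terms of moments. To complete your proof you must either cite that lemma (having verified its hypotheses, as the paper does) or supply the full iteration argument you sketched; as written, the central claim $\om\in\DD$ remains unproven.
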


\begin{proof}
Since $1/F$ is completely monotonic, there exists a positive Borel
measure $\om$ on $[0,1]$ such that
$F\left(a+m\right)=\frac{1}{2\om_m}$ for all $m\in\N\cup\{0\}$. In
particular, $F(a+2n-1)=\frac{1}{2\om_{2n-1}}$ for all $n\in\N$.
Therefore
    \begin{equation*}
    \begin{split}
    \frac{\varphi(z)}{1-z}
    &=\frac1z\left(\sum_{k=1}^\infty z^k\right)\left(\sum_{j=0}^\infty\widehat{\varphi}(j)z^j\right)
    =\frac1z\sum_{n=1}^\infty\left(\sum_{j=0}^{n-1}\widehat{\varphi}(j)\right)z^n\\
    &=\sum_{n=1}^\infty F(a+2n-1)z^{n-1}
    =\sum_{n=1}^\infty\frac{z^{n-1}}{2\om_{2n-1}}
    =\sum_{k=0}^\infty\frac{z^{k}}{2\om_{2k+1}},
    \end{split}
    \end{equation*}
and thus \eqref{repre3} is proved. Moreover,
    \begin{equation}
    \begin{split}\label{1n}
    \om(\{1\})\le\om\left(\left[1-\frac1{2n-1},1\right]\right)\lesssim\om_{2n-1}=\frac{1}{2F(a+2n-1)}\to0,\quad
    n\to\infty,
    \end{split}
    \end{equation}
and if $m=\frac{1}{M-1}+1$, then there exists a constant
$C=C(\om)>0$ such that
    \begin{equation}\label{m}
    \widehat{\om}(0)\le C\widehat{\om}\left(1-\frac{1}{m}\right).
    \end{equation}
For otherwise we would have
$\widehat{\om}\left(1-\frac{1}{m}\right)=0$, and then
    $$
    \frac{1}{2F(a+n)}=\om_{n}=\int_0^{1-\frac{1}{m}}r^{n}\,d\om(r)
    \le\left(1-\frac{1}{m}\right)^{n}\int_0^{1-\frac{1}{m}}\,d\om(r)=M^{-n}\int_0^{1-\frac{1}{m}}\,d\om(r),
    $$
which yields a contradiction with the hypothesis $\lim_{n\to
\infty}\frac{M^n}{F(a+n)}=\infty$. Since
$\om_n=\frac{1}{2F\left(a+n\right)}\lesssim\frac{1}{2F\left(a+2n\right)}=\om_{2n}$,
this together with \eqref{1n} and \eqref{m} implies $\om\in\DD$ by
\cite[Lemma~2.1]{PelSum14}.
\end{proof}

\begin{Prf}{\em{Theorem~\ref{th:kernelintegral}.}}
Consider the function
$\vp(z)=\int_0^1\frac{d\nu(r)}{1-rz}=\sum_{j=0}^\infty \nu_j z^j$,
and observe that $\sum_{j=0}^n
\nu_j=\int_0^1\frac{1-r^{n+1}}{1-r}d\nu(r)=F(2n+1+1/2)$ for
    $$
    F(x)=\int_{0}^1\frac{1-r^{\frac{x+\frac{1}{2}}{2}}}{1-r}\,d\nu(r),\quad 0\le x<\infty.
    $$
Since $f(x)=1/x$ is completely monotonic and $F$ is a Bernstein
function as is seen by direct calculations, $1/F$ is completely
monotonic. Therefore, by Theorem~\ref{pr:general}, there exists a
positive Borel measure $\om$ on $[0,1]$ such that
    \begin{equation*}
   \frac1{1-z}\int_0^1\frac{d\nu(r)}{1-rz}=\sum_{k=0}^\infty\frac{z^{k}}{2\om_{2k+1}},\quad z\in\D.
    \end{equation*}
Moreover, $\om$ is supported on $[0,1)$ because
$\sum_{j=0}^\infty\nu_j=\infty$, and it satisfies \eqref{m} because
$\lim_{n\to \infty} \frac{M^n}{\int_0^1
\frac{1-r^{n+1}}{1-r}d\nu(r)}\ge\lim_{n\to \infty}
\frac{M^n}{n\nu([0,1])}=\infty$. Since
$1-r^\frac{2m+1}{2}\le2(1-r^{\frac{m+1}{2}})$ for all
$m\in\N\cup\{0\}$, we also have $\om_m\lesssim\om_{2m}$. Hence
$\om\in\DD$ by \cite[Lemma~2.1]{PelSum14}, and
    $$
    B^\om_z(\z)=\frac1{1-\overline{z}\z}\int_0^1\frac{d\nu(r)}{1-r\overline{z}\z},\quad z,\z\in\D.
    $$
\end{Prf}

\medskip

Theorems~\ref{th:kernelintegral} and~\ref{pr:general} can be used to
provide examples of concrete Bergman reproducing kernels:
\begin{enumerate}
\item If $\nu$ is the Lebesgue measure, Theorem~\ref{th:kernelintegral} gives the kernel
    $$
    B^\om_z(\z)=\frac1{1-\overline{z}\z}\frac1{\overline{z}\z}\log\frac{1}{1-\overline{z}\z}.
    $$
\item Theorem~\ref{pr:general}  allows to recover the
well-known formula of the Bergman kernels induced by the standard
weights $\om(z)=(\a+1)(1-|z|^2)^\a$, $\a>-1$. Indeed, by choosing
$a=1$ and $F(x)=\frac{1}{\beta(x/2,\a+1)}$, have
$F(a+2j+1)=F(2j+2)=\frac{1}{\beta(j+1,\a+1)}$. It is clear that
$1/F$ is completely monotonic on $[0,\infty)$ and the function $\vp$
associated to $F$ is
    $$
    \frac{1}{(1-z)^{\a+1}}=
    \frac{1}{\beta(1,\a+1)}+\sum_{j=1}^\infty\left(\frac{1}{\beta(j+1,\a+1)}-\frac{1}{\beta(j,\a+1)}\right)z^j.
    $$

\item Let $\varphi(z)=\frac{\log\frac{e}{1-z}}{1-z}=1+\sum_{j=1}^\infty\left(1+\sum_{k=1}^{j}\frac{1}{k} \right)z^j$ so that $\sum_{j=0}^n\widehat{\varphi}(j)=1+(n+1)\int_{0}^1\frac{1-s^{n+1}}{1-s}\,ds$, and choose $a=\frac{1}{2}$ and $F(x)=1+\frac{x+1}{2}\int_{0}^1
\frac{1-s^{\frac{x+\frac{1}{2}}{2}}} {1-s}\,ds$ so that
$F(1/2+2n+1)=\sum_{j=0}^n\widehat{\varphi}(j)$. Since
$x\to\frac{x+1}{2}$ and $x\to \int_{0}^1
\frac{1-s^{\frac{x+\frac{1}{2}}{2}}} {1-s}\,ds$ are Bernstein
functions on $[0,\infty)$, $F$ is completely monotonic on
$[0,\infty)$. Moreover, it is clear that $F$ satisfies the rest of
hypothesis of Theorem~\ref{pr:general}, and hence there exists
$\om\in\DD$ such that
    $$
    B^\om_z(\z)=\log\frac{1}{(1-\overline{z}\z)^2}\frac{1}{(1-\overline{z}\z)^2}.
    $$
\end{enumerate}

\section{Generalization of the result of Bekoll\'e and Bonami}\label{Sec3}

For a positive Borel measure $\mu$ on $\D$ and an analytic function $\Psi$ in $D(1,1)$ such that its restriction to the interval $(0,2)$ is a real positive function, define
    \begin{equation}\label{projposi}
    P^+_{\Psi,\mu}(f)(z)=\int_{\D}\left|\frac{\Psi(1-\overline{\z}z)}{1-\overline{\z}z}\right| f(\z)\,d\mu(\z),\quad f\in L^1_\mu,\quad z\in\D.
    \end{equation}
To obtain a dyadic model for the operator $P^+_{\Psi,\mu}$ we define the dyadic grids
    \begin{equation}\label{concretegrid}
    \displaystyle \mathcal D^{\beta} = \left\{ I^\beta_{j,m}:
    \,j\in\N\cup\{0\},\,m\in\N\cup\{0\},\,0\le m\le 2^{j}-1\right\},\quad \beta\in \{0,1/2\},
    \end{equation}
where
    $$
    I^\beta_{j,m}=\left\{e^{i\theta}: \theta\in  \left[ \frac{4\pi\left( m+\beta\right)}{2^{j}},
    \frac{4\pi\left( m+1+\beta\right)}{2^j}\right) \right\}.
    $$
For each interval $I\subset\T$, with the convention $I=(\alpha,\b)=(\alpha+e^{i2\pi j},\b+e^{i2\pi j})$ for all $j\in \mathbb N\cup\{0\}$, there exists $K=K(I)\in\mathcal{D}\cup\mathcal{D}^{1/2}$ such that $I\subset K$ and $|K|\leq 4|I|$. Define the positive dyadic kernels
    \begin{equation}\label{e.dyadicp}
    K^{\beta}_\Psi(z,\z)=\sum_{I\in \mathcal D^{\beta}} \frac{1_{S(I)}(z)1_{S(I)}(\z)\Psi(|I|)}{|I|},\quad z,\z\in\D,\quad \beta\in \{0,1/2\},
    \end{equation}
where $S(I)=\{re^{i\theta}:\,\, 1-|I|\leq r<1,\, e^{i\theta}\in I\}$ is the Carleson square associated to $I$, and
$|I|$ stands for the normalized arc-length of the interval $I$. For this kernel and a positive Borel measure $\mu$ on $\D$, define the dyadic operator
    \begin{equation}\label{e.drm}
    P^{\beta}_{\Psi,\mu}(f)(z)=\sum_{I\in\mathcal D^{\beta}}\left\langle f, \frac{1_{S(I)} \Psi(|I|)}{|I|} \right \rangle_{L_\mu^2} 1_{S(I)}(z),
    \quad z\in\D,
    \end{equation}
and write
    \begin{equation}\label{e.kernel}
    K_{\Psi}(z,\z)= \frac{\Psi(|1-\overline{\z}z|)}{|1-\overline{\z}z|}, \quad z,\z\in\D,
    \end{equation}
for short.

The first lemma relates the operator $P^+_{\Psi,\mu}$ to the sum of the dyadic operators $P^{\beta}_{\Psi,\mu}$, $\beta\in \{0,1/2\}$, by means of a simple pointwise estimate for the inducing kernels.

\begin{lemma}\label{p.pointwise}
Let $\Psi$ be a positive essentially decreasing function on $(0,2)$
such that $\Psi(t)\le C \Psi(2t)$ for all $t\in(0,1)$ and for some
$C=C(\Psi)>0$. Then there exists a constant $C_1=C_1(\Psi)>1$ such
that
    \begin{equation}\label{eq:kernelcomp}
    C_1^{-1}\left(K_{\Psi}^{0}(z,\z)+K_{\Psi}^{1/2}(z,\z)\right)\le K_{\Psi}(z,\z)
    \le C_1\left(K_{\Psi}^{0}(z,\z)+K_{\Psi}^{1/2}(z,\z)\right),\quad z,\z\in\D.
    \end{equation}
\end{lemma}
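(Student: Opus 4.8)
The plan is to reduce both inequalities in \eqref{eq:kernelcomp} to two standard facts. First, the elementary estimate $|1-\overline{\z}z|\asymp(1-|z|)+(1-|\z|)+|\arg z-\arg\z|$, valid for all $z,\z\in\D$ with the angular difference taken in $(-\pi,\pi]$. Second, the observation that $z,\z\in S(I)$ forces $1-|z|\le|I|$, $1-|\z|\le|I|$ and $|\arg z-\arg\z|\le|I|$, whence $|1-\overline{\z}z|\lesssim|I|$ with an absolute constant. I will also record that the hypotheses on $\Psi$ make $t\mapsto\Psi(t)/t$ essentially decreasing and give $\Psi(2^kt)\asymp\Psi(t)$ across any fixed number of dyadic scales; these allow me to pass freely between $\Psi(|I|)/|I|$ and $K_\Psi(z,\z)$ whenever $|I|\asymp|1-\overline{\z}z|$. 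All constants produced will depend only on $\Psi$ through its doubling and essential-monotonicity constants, plus absolute geometric constants.

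For the lower bound I fix $z,\z\in\D$ and a single grid $\beta\in\{0,1/2\}$. The arcs $I\in\mathcal D^\beta$ with $z,\z\in S(I)$ are precisely the dyadic ancestors of a minimal such arc $I_{\min}$, so they form a nested chain whose lengths run through the scales $|I_{\min}|,2|I_{\min}|,4|I_{\min}|,\dots$ up to $\asymp1$. Since $z,\z\in S(I_{\min})$ yields $|1-\overline{\z}z|\lesssim|I_{\min}|$, and $\Psi$ is essentially decreasing, summing the resulting geometric series gives $K^\beta_\Psi(z,\z)=\sum_{k\ge0}\frac{\Psi(2^k|I_{\min}|)}{2^k|I_{\min}|}\lesssim\frac{\Psi(|I_{\min}|)}{|I_{\min}|}$. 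Finally, $\Psi(t)/t$ essentially decreasing together with $|I_{\min}|\gtrsim|1-\overline{\z}z|$ (using doubling to bridge the bounded gap in scale) gives $\frac{\Psi(|I_{\min}|)}{|I_{\min}|}\lesssim K_\Psi(z,\z)$; adding the estimates for the two grids proves the left inequality in \eqref{eq:kernelcomp}.

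For the upper bound I again fix $z,\z$ and must now produce a \emph{single} dyadic square that sees both points at the correct scale. I will choose an arc $J\subset\T$ of length comparable to $|1-\overline{\z}z|$, centred so that both $\arg z$ and $\arg\z$ lie in $J$; the geometric estimate then also gives $1-|z|\le|J|$ and $1-|\z|\le|J|$, so $z,\z\in S(J)$. Invoking the covering property quoted just before the lemma, there is $K\in\mathcal D^0\cup\mathcal D^{1/2}$ with $J\subset K$ and $|K|\le4|J|$, hence $z,\z\in S(K)$ and $|K|\asymp|1-\overline{\z}z|$. The single term $\frac{1_{S(K)}(z)1_{S(K)}(\z)\Psi(|K|)}{|K|}=\frac{\Psi(|K|)}{|K|}\asymp K_\Psi(z,\z)$ then occurs in $K^0_\Psi+K^{1/2}_\Psi$, giving the right inequality. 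This last step is the main obstacle: a single grid does not suffice, because $\arg z$ and $\arg\z$ may straddle a dyadic boundary and first share a common arc only at a scale far larger than their separation, destroying the comparison $|K|\asymp|1-\overline{\z}z|$. The two shifted grids $\mathcal D^0,\mathcal D^{1/2}$ (through the one-third-type covering statement) are exactly what repairs this, and some care is needed to capture the angular separation and both radii simultaneously at the scale $|1-\overline{\z}z|$.
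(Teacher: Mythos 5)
Your proposal is correct and follows essentially the same route as the paper: the lower bound by summing $\Psi(2^k|I_{\min}|)/(2^k|I_{\min}|)$ over the nested chain of ancestors of the minimal arc and comparing scales via essential monotonicity and doubling, and the upper bound by finding an interval $J$ with $z,\z\in S(J)$ and $|J|\asymp|1-\overline{\z}z|$ and then passing to a covering arc $K\in\mathcal D^0\cup\mathcal D^{1/2}$ with $|K|\le 4|J|$. The only cosmetic difference is that you construct $J$ explicitly from the estimate $|1-\overline{\z}z|\asymp(1-|z|)+(1-|\z|)+|\arg z-\arg\z|$, where the paper cites \cite{AlPoRe} for this step.
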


\begin{proof}
Let $\b\in\{0,1/2\}$ and $z,\z\in\D$. If both $z$ and $\z$ are distinct from zero, choose $I_{0}=I_{0}(z,\z)\in\mathcal D^{\beta}$
 of minimal length such that $|I_{0}|\ge\max\{1-|z|,1-|\z|\}$ and $z/|z|,\z/|\z|\in I_{0}$, for otherwise, take $I_0=I^\b_{0,0}$. Then $z,\z\in S(I_0)$.
Let $N\in\N$ such that $2^N|I_0|=4$. Since $\Psi$ is essentially
decreasing by the hypothesis, we deduce
    \begin{equation}
    \begin{split}
    \sum_{I\in \mathcal D^{\beta}} \frac{1_{S(I)}(z)1_{S(I)}(\z)\Psi(|I|)}{|I|}
    &=\sum_{I\in \mathcal D^{\beta}, I_{0}\subset I}\frac{\Psi(|I|)}{|I|}
    = \sum_{k=0}^N \frac{\Psi(2^k|I_0|)}{2^k|I_0|}\\
    &\lesssim \frac{\Psi(|I_0|)}{|I_0|}\sum_{k=0}^N \frac{1}{2^k}
    \lesssim \frac{\Psi(|I_0|)}{|I_0|}.
    \end{split}
    \end{equation}
A direct calculation shows that $|1-\overline{\z}z|\le C|I_{0}|$ for some $C>1$. As $\Psi$ is essentially decreasing and admits the doubling property, we obtain
    $$
    \sum_{I\in \mathcal D^{\beta}} \frac{1_{S(I)}(z)1_{S(I)}(\z)\Psi(|I|)}{|I|}
    \lesssim\frac{\Psi\left(\frac{|1-\overline{\z}z|}{C}\right)}{|1-\overline{\z}z|}
    \lesssim \frac{\Psi\left(|1-\overline{\z}z|\right)}{|1-\overline{\z}z|}.
    $$
Since $\b$ was either 0 or 1/2, the left-hand inequality in \eqref{eq:kernelcomp} is proved.

To prove the right hand inequality, let $z,\z\in\D$. Let
$J=J(z,\z)\subset\T$ such that $z,\z\in S(J)$ and
$|J|\asymp|1-\overline{\z}z|$, see~\cite{AlPoRe} for details. There
exist $\beta\in \{0, 1/2\}$ and $K\in\mathcal D^{\beta}$ such that
$J\subset K$ and $|K|\le4 |J|$. By using the hypotheses on $\Psi$,
we get
    \begin{equation*}
    \begin{split}
    \frac{\Psi(|1-\overline{\z}z|)}{|1-\overline{\z}z|}
    &\lesssim \frac{\Psi(|J|)}{|J|}
    \lesssim \frac{\Psi(|K|)}{|K|}
    \lesssim\sum_{\substack{I\in\mathcal D^{\beta}\\ K\subset J}}\frac{1_{S(I)}(z)1_{S(I)}(\z)\Psi(|I|)}{|I|}
    \lesssim K_{\Psi}^{0}(z,\z)+K_{\Psi}^{1/2}(z,\z),
    \end{split}
    \end{equation*}
and the lemma is proved.
\end{proof}

For a positive Borel measure $\nu$ and a dyadic grid $\mathcal D$ on $\T$, the dyadic weighted Hardy-Littlewood (or H\"ormander type) maximal function is defined as
    \begin{equation}\label{e.max}
    M_{\nu,\mathcal D}(f)(z)= \sup_{I\in\mathcal D} \frac{1_{S(I)}(z)}{\nu(S(I))}\int_{S(I)}|f(\z)|\,d\nu(\z).
    \end{equation}
The maximal operator $M_{\nu,\mathcal D^\beta}$ appears naturally in the study of the dyadic operator $P^{\beta}_{\Psi,\mu}$. Its standard boundedness properties are given in the next lemma.

\begin{letterlemma}\label{Lemma:maximal-dyadic}
Let $\nu$ be a positive Borel measure and $\mathcal D$ a dyadic grid on $\T$. Then $M_{\nu,\mathcal D}:L^1_\nu\to L^{1^\infty}_\nu$ is bounded and consequently, $M_{\nu,\mathcal D}:L^p_\nu\to L^p_\nu$ is bounded for each $1<p\le\infty$. In particular, there exists a constant $C=C(p)>0$ such that
    \begin{equation}\label{e.mbounded}
    \|M_{\nu,\mathcal D}(f)\|_{L^{p}_\nu}\le C \| f\|_{L^{p}_\nu}.
    \end{equation}
\end{letterlemma}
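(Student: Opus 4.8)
The plan is to prove the weak-type $(1,1)$ bound by a Calder\'on--Zygmund stopping-time argument, to obtain the $L^\infty$ estimate trivially, and then to conclude the strong $(p,p)$ inequality for $1<p<\infty$ by Marcinkiewicz interpolation. The decisive feature is that $M_{\nu,\mathcal D}$ is built from the \emph{dyadic} Carleson squares $S(I)$, $I\in\mathcal D$, so that no regularity (such as doubling) is required of $\nu$: the weak-type bound holds with constant $1$ purely because two dyadic Carleson squares are either nested or disjoint. The only genuine care is in the selection of maximal squares, which is why I isolate it below.

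For the weak $(1,1)$ inequality I would fix $f\in L^1_\nu$ and $\lambda>0$, and set
$$
\mathcal G_\lambda=\left\{I\in\mathcal D:\ \nu(S(I))>0\ \text{and}\ \frac{1}{\nu(S(I))}\int_{S(I)}|f|\,d\nu>\lambda\right\}.
$$
Since $\mathcal D$ is countable and $z$ lies in $\{M_{\nu,\mathcal D}(f)>\lambda\}$ exactly when $z\in S(I)$ for some $I\in\mathcal G_\lambda$, the function $M_{\nu,\mathcal D}(f)$ is measurable and $\{M_{\nu,\mathcal D}(f)>\lambda\}=\bigcup_{I\in\mathcal G_\lambda}S(I)$. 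I then pass to maximal squares: because the ancestors of any arc in $\mathcal D$ form a finite chain terminating at the top-level arc $I^\beta_{0,0}$, every $I\in\mathcal G_\lambda$ is contained in a maximal element of $\mathcal G_\lambda$, which yields a subfamily $\mathcal G_\lambda^\star\subset\mathcal G_\lambda$ with $\bigcup_{I\in\mathcal G_\lambda}S(I)=\bigcup_{I\in\mathcal G_\lambda^\star}S(I)$. Here I use that arc containment $I\subset I'$ forces $S(I)\subset S(I')$ and that disjoint arcs produce disjoint Carleson squares, so the squares $\{S(I):I\in\mathcal G_\lambda^\star\}$ are pairwise disjoint. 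The estimate then closes as
$$
\nu\bigl(\{M_{\nu,\mathcal D}(f)>\lambda\}\bigr)=\sum_{I\in\mathcal G_\lambda^\star}\nu(S(I))\le\frac1\lambda\sum_{I\in\mathcal G_\lambda^\star}\int_{S(I)}|f|\,d\nu\le\frac{\|f\|_{L^1_\nu}}{\lambda},
$$
where the last step uses disjointness together with the non-negativity of $|f|$.

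The $L^\infty$ endpoint is immediate, since each average is bounded by $\|f\|_{L^\infty_\nu}$, whence $\|M_{\nu,\mathcal D}(f)\|_{L^\infty_\nu}\le\|f\|_{L^\infty_\nu}$; this also settles the case $p=\infty$ in the statement. Interpolating the weak $(1,1)$ bound against the $L^\infty$ bound via the Marcinkiewicz theorem yields the strong $(p,p)$ inequality \eqref{e.mbounded} with a constant $C=C(p)$ depending only on $p$, as claimed. I do not expect a substantive obstacle: the one point demanding attention is the maximal-square selection, namely verifying that maximal elements of $\mathcal G_\lambda$ exist (which relies on the grid having a largest scale) and that the nesting of arcs transfers to nesting of the Carleson squares, so that the chosen family is honestly disjoint and the telescoping estimate closes with the sharp constant $1$.
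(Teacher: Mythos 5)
Your overall strategy is the same as the paper's: the trivial $L^\infty$ bound, Marcinkiewicz interpolation, and a weak $(1,1)$ estimate obtained by selecting maximal squares from the family of Carleson squares with large average. However, there is a genuine gap at the step you yourself single out as decisive: the claim that two arcs of the grid are either nested or disjoint, and hence that the maximal Carleson squares are pairwise disjoint. This is true for $\mathcal D^{0}$, but it is \emph{false} for $\mathcal D^{1/2}$ as defined in \eqref{concretegrid}, because there the translation of the grid is scale dependent: at level $j$ the arcs are shifted by half of their own length $4\pi/2^{j}$, not by a fixed amount. Concretely, $I^{1/2}_{3,0}=\{e^{i\theta}:\theta\in[\pi/4,3\pi/4)\}$ and $I^{1/2}_{2,0}=\{e^{i\theta}:\theta\in[\pi/2,3\pi/2)\}$ overlap, yet neither contains the other; both can therefore be maximal members of your family $\mathcal G_\lambda^{\star}$, and then $S(I^{1/2}_{3,0})\cap S(I^{1/2}_{2,0})\ne\emptyset$, so the identity $\nu(\{M_{\nu,\mathcal D}f>\lambda\})=\sum_{I\in\mathcal G_\lambda^{\star}}\nu(S(I))$ breaks down. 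Since Lemma~\ref{Lemma:maximal-dyadic} is applied in the paper precisely to the grids $\mathcal D^{\beta}$, $\beta\in\{0,1/2\}$ (for instance in the proof of Theorem~\ref{th:sufp>1}), your argument does not cover the case for which the lemma is needed. The failure is not cosmetic: taking $\nu$ to give mass $1$ to each of $S(I^{1/2}_{2,0})\setminus S(I^{1/2}_{3,0})$ and $S(I^{1/2}_{3,0})\setminus S(I^{1/2}_{2,0})$, mass $\e$ to the intersection, and letting $f$ be a large multiple of the indicator of the intersection, one gets $\nu(\{M_{\nu,\mathcal D^{1/2}}f>\lambda\})$ arbitrarily close to $2\|f\|_{L^1_\nu}/\lambda$, so the constant $1$ you claim is genuinely false for this grid.

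The paper's proof repairs exactly this point: instead of disjointness, it asserts that the maximal squares $\Phi^{\max}$ cover $O_\alpha$ with bounded overlap, namely that every $z\in O_\alpha$ lies in at most \emph{two} squares of $\Phi^{\max}$. This yields
\begin{equation*}
\sum_{S\in\Phi^{\max}}\int_{S}|f|\,d\nu\le 2\int_{O_\alpha}|f|\,d\nu ,
\end{equation*}
and hence the weak $(1,1)$ inequality with constant $2$ rather than $1$. The overlap bound is the one statement that actually uses the structure of these particular grids (three pairwise non-nested arcs cannot share a point, because the smallest would have to contain in its interior one endpoint from each of the two larger arcs, while endpoints of arcs at distinct scales of $\mathcal D^{1/2}$ are separated by at least the length of any strictly finer arc), and it is precisely the ingredient missing from your argument. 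Everything else in your proposal --- measurability of $M_{\nu,\mathcal D}f$, existence of maximal squares, the $L^\infty$ endpoint, and the interpolation --- is correct and matches the paper.
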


\begin{proof}
By the Marcinkiewicz interpolation theorem it is enough to prove the weak (1,1) inequality.
Let $f\ge 0$, $\a>0$ and $O_\a=\{z\in\D: M_{\nu,\mathcal D}f(z)>\a\}$. Further, let $\Phi$ be the family of Carleson squares $S\in\mathcal D$ such that
    $$
    \int_{S}|f(\z)|\,d\nu(\z)>\a\nu(S),
    $$
and let $\Phi^{\text{max}}$ the subfamily of $\Phi$ consisting of the maximal Carleson squares.
Then $\Phi^{\text{max}}$ is a covering of $O_\a$ and each $z\in O_\a$ is contained in at most two different squares in $\Phi^{\text{max}}$. Therefore
    $$
    \nu\left(O_\a \right)\le\sum_{S\in \Phi^{\text{max}}}\nu(S)\le \frac{1}{\a}\sum_{S\in \Phi^{\text{max}}}\int_{S}|f(\z)|\,d\nu(\z)
    \le\frac{2}{\a}\int_{O_\a}|f(\z)|\,d\nu(\z)
    \le\frac{2}{\a} \|f\|_{L^1_\nu},
    $$
and the lemma is proved.
\end{proof}

Let $v,u\in L^1_\mu$ non-negative, and let $1<p<\infty$ and $p'$ its dual exponent. The dual weight of~$v$ is $\sigma=\sigma(p,v)=v^{1-p'}$. If $T$ is a linear operator, the following are equivalent:
    \begin{itemize}
    \item[\rm(A)]$T:L_\mu^{p}(v)\to L_\mu^{p}(u)$ is bounded;
    \item[\rm(B)]$T(\sigma\cdot): L_\mu^{p}(\sigma)\to L_\mu^{p}(u)$ is bounded;
    \item[\rm(C)]$u^{1/p}T(\sigma^{1/p'}\cdot):L_\mu^{p}\to L_\mu^{p}$ bounded.
    \end{itemize}
Moreover,
    \begin{equation}\label{equivnorms}
    \|T\|_{L_\mu^{p}(v)\to L_\mu^{p}(u)}=\|T(\sigma\cdot)\|_{L_\mu^{p}(\sigma)\to L_\mu^{p}(u)}=
    \|u^{1/p}T(\sigma^{1/p'}\cdot)\|_{L_\mu^{p}\to L_\mu^{p}}.
    \end{equation}

We now show how to obtain a linear bound for our dyadic operator in terms of the $B_{p,\mu}$-characteristic. This requires some hypotheses on the measure $\mu$ and the function $\Psi$. The following theorem is an extension of the main result of~\cite{PottRegueraJFA13}.

\begin{theorem}\label{th:sufp>1}
Let $1<p<\infty$, $\mu$ a positive Borel measure on $\D$ and $v\in B_{p,\mu}$. Let $\Psi:D(1,1)\to\mathbb{C}$ be an analytic function such that its restriction to the interval $(0,2)$ is positive and $|\Psi(1-z)|=|\Psi(1-\overline{z})|$ for all $z\in\D$.
Further, assume that $\mu(S(I))\lesssim\mu(T(I))$ and $\Psi(\vert I\vert)
\mu(S(I))\lesssim|I|$ for all dyadic intervals $I$. Then
    $$
    \left\Vert P^{\beta}_{\Psi,\mu}(f)\right\Vert_{L_\mu^p(v)}\lesssim B_{p,\mu}(v)^{\max\left\{1,\frac{1}{p-1}\right\}}
    \left\Vert f\right\Vert_{L_\mu^p(v)},\quad \beta\in\left\{0,\frac{1}{2}\right\}.
    $$
\end{theorem}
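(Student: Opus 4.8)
The plan is to dualise the estimate into a bilinear form and then extract the sharp power of $B_{p,\mu}(v)$ by a stopping-time (corona) decomposition combined with the weighted Carleson embedding theorem, which is available through Lemma~\ref{Lemma:maximal-dyadic}. First I would note that $P^{\beta}_{\Psi,\mu}$ is self-adjoint and positive on $L^2_\mu$ (the coefficients $\Psi(|I|)/|I|$ are positive because $\Psi>0$ on $(0,2)$), so by the equivalence (A)--(C) and \eqref{equivnorms} the asserted operator bound (with $u=v$) is equivalent, upon writing $\sigma=v^{1-p'}$ and $\theta=\max\{1,\tfrac1{p-1}\}$, to the bilinear inequality
\begin{equation*}
\Lambda(f,g):=\sum_{I\in\mathcal D^\beta}\frac{\Psi(|I|)}{|I|}\left(\int_{S(I)}f\,\sigma\,d\mu\right)\left(\int_{S(I)}g\,v\,d\mu\right)\lesssim B_{p,\mu}(v)^{\theta}\,\|f\|_{L^p_\mu(\sigma)}\,\|g\|_{L^{p'}_\mu(v)}
\end{equation*}
for nonnegative $f,g$; here the left-hand side is exactly $\langle P^{\beta}_{\Psi,\mu}(\sigma f),\,gv\rangle_{L^2_\mu}$ after expanding the inner products. (The symmetry hypothesis $|\Psi(1-z)|=|\Psi(1-\overline z)|$ plays no role at the dyadic level; it is needed only for the comparison with the continuous operator $P^+_{\Psi,\mu}$.)

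Next I would use the structural hypothesis $\Psi(|I|)\mu(S(I))\lesssim|I|$, i.e. $\frac{\Psi(|I|)}{|I|}\mu(S(I))\lesssim1$, to bound each coefficient by $\frac{\Psi(|I|)}{|I|}\mu(S(I))^2\lesssim\mu(S(I))$. Writing $\langle h\rangle^\mu_{S(I)}=\mu(S(I))^{-1}\int_{S(I)}h\,d\mu$ and factoring out the weights via $\langle f\sigma\rangle^\mu_{S(I)}=\frac{\sigma(S(I))}{\mu(S(I))}\langle f\rangle^\sigma_{S(I)}$ and $\langle gv\rangle^\mu_{S(I)}=\frac{v(S(I))}{\mu(S(I))}\langle g\rangle^v_{S(I)}$, this turns the sum into
\begin{equation*}
\Lambda(f,g)\lesssim\sum_{I\in\mathcal D^\beta}\frac{\sigma(S(I))\,v(S(I))}{\mu(S(I))}\,\langle f\rangle^\sigma_{S(I)}\,\langle g\rangle^v_{S(I)},
\end{equation*}
where $\sigma(S(I))=\int_{S(I)}\sigma\,d\mu$, $v(S(I))=\int_{S(I)}v\,d\mu$, and $\langle f\rangle^\sigma_{S(I)}$, $\langle g\rangle^v_{S(I)}$ are the $\sigma$- and $v$-averages over $S(I)$. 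This is the point of the hypothesis on $\Psi$: it reduces the operator to a genuinely sparse-type positive form to which the machinery of~\cite{PottRegueraJFA13} applies.

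I would then run parallel coronas: let $\mathcal F\subset\mathcal D^\beta$ be the principal squares for $f$ relative to $\sigma$ (maximal descendants on which $\langle f\rangle^\sigma$ roughly doubles) and $\mathcal G$ the principal squares for $g$ relative to $v$, so that $\langle f\rangle^\sigma_{S(I)}\lesssim\langle f\rangle^\sigma_{S(F)}$ and $\langle g\rangle^v_{S(I)}\lesssim\langle g\rangle^v_{S(G)}$ whenever $F=\pi_{\mathcal F}(I)$ and $G=\pi_{\mathcal G}(I)$. Grouping by the stopping data $(F,G)$ reduces $\Lambda$ to a sum of products $\langle f\rangle^\sigma_{S(F)}\langle g\rangle^v_{S(G)}$ weighted by inner sums of $\frac{\sigma(S(I))v(S(I))}{\mu(S(I))}$; invoking the $B_{p,\mu}$-condition in the form $\frac{v(S(I))}{\mu(S(I))}\le B_{p,\mu}(v)\left(\frac{\mu(S(I))}{\sigma(S(I))}\right)^{p-1}$ converts these inner sums into Carleson packings against $\sigma$ (respectively $v$). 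The reverse-doubling hypothesis $\mu(S(I))\lesssim\mu(T(I))$ supplies the base packing $\sum_{I\subset R}\mu(S(I))\lesssim\sum_{I\subset R}\mu(T(I))\asymp\mu(S(R))$, since the regions $T(I)$ are pairwise disjoint and their union over $I\subseteq R$ reconstitutes $S(R)$. Feeding the two Carleson conditions into the weighted Carleson embedding theorem — itself a consequence of the $L^p_\sigma$- and $L^{p'}_v$-boundedness of $M_{\sigma,\mathcal D^\beta}$ and $M_{v,\mathcal D^\beta}$ from Lemma~\ref{Lemma:maximal-dyadic} — and applying a final Hölder inequality in $(F,G)$ yields $\|f\|_{L^p_\mu(\sigma)}\|g\|_{L^{p'}_\mu(v)}$ with the appropriate power of $B_{p,\mu}(v)$.

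The hard part will be the exponent bookkeeping: one must distribute the factor $\frac{\sigma(S(I))v(S(I))}{\mu(S(I))}$ between the $\mathcal F$-corona (where it should pack against $\sigma$) and the $\mathcal G$-corona (where it should pack against $v$) so that the total power of $B_{p,\mu}(v)$ is exactly $\theta=\max\{1,\tfrac1{p-1}\}$ rather than something larger. This is precisely where the dichotomy between $1<p\le2$ and $p\ge2$ enters, and where the argument of~\cite{PottRegueraJFA13} must be reproduced quantitatively rather than merely qualitatively. A secondary technical point is to make the corona construction legitimate for the abstract measure $\mu$ — ensuring that the principal squares are $\sigma$- and $v$-sparse and that averages on $S(I)$ are comparable to averages on the admissible region $T(I)$ — and this is exactly the juncture at which both standing hypotheses $\mu(S(I))\lesssim\mu(T(I))$ and $\Psi(|I|)\mu(S(I))\lesssim|I|$ are indispensable.
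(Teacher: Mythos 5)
Your opening reductions are sound and coincide with the paper's own first steps: dualizing to the bilinear form $\langle P^{\beta}_{\Psi,\mu}(\sigma f),\,gv\rangle_{L^2_\mu}$ and using $\Psi(|I|)\mu(S(I))\lesssim|I|$ together with $\mu(S(I))\lesssim\mu(T(I))$ to reduce to a positive sparse-type sum. The genuine gap is that the heart of the theorem --- obtaining the precise power $\max\{1,\tfrac{1}{p-1}\}$ --- is never actually proved: you defer it to ``exponent bookkeeping'' and to reproducing the argument of \cite{PottRegueraJFA13} ``quantitatively rather than merely qualitatively,'' which is precisely the part a proof must supply. Moreover, the corona mechanism as you describe it runs into a concrete obstruction. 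After grouping by stopping squares, the inner sums you must control are of the form $\sum\frac{\sigma(S(I))v(S(I))}{\mu(S(I))}$, and the only packing available from the hypotheses is against $\mu$, namely $\sum_{I\subset R}\mu(S(I))\lesssim\sum_{I\subset R}\mu(T(I))\le\mu(S(R))$; but to invoke the weighted Carleson embedding theorem in $L^p_\mu(\sigma)$ and $L^{p'}_\mu(v)$ you need packing against $\sigma$ and against $v$, respectively. Your one explicit use of the weight condition, $\frac{v(S(I))}{\mu(S(I))}\le B_{p,\mu}(v)\bigl(\frac{\mu(S(I))}{\sigma(S(I))}\bigr)^{p-1}$, gives $\frac{\sigma(S(I))v(S(I))}{\mu(S(I))}\le B_{p,\mu}(v)\,\sigma(S(I))^{2-p}\mu(S(I))^{p-1}$, which is a $\mu$-Carleson sequence only when $p=2$; for $p\neq2$ it packs against neither $\sigma$ nor $v$, and you give no rule for distributing this factor between the two coronas. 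So the plan, as stated, does not close.

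The missing ingredient is the pointwise identity $\sigma^{1/p'}v^{1/p}\equiv1$, which is what allows one to split the measure $\mu$ between the two maximal functions and avoid stopping times altogether. That is the paper's route: for $p=2$ the coefficient bound gives $B_{2,\mu}(v)\mu(T(I))$, the pairwise disjointness of the sets $T(I)$ dominates the whole sum by $\int_{\D}\bigl(M_{v^{-1}\mu,\mathcal{D}}(f)v^{-1/2}\bigr)\bigl(M_{v\mu,\mathcal{D}}(g)v^{1/2}\bigr)\,d\mu$, and Cauchy--Schwarz plus Lemma~\ref{Lemma:maximal-dyadic} finishes the estimate with a single factor $B_{2,\mu}(v)$. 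For $1<p\le2$ one first passes to $\bigl(P^{\beta}_{\Psi,\mu}(\sigma f)\bigr)^{p-1}$ and exploits the sub-additivity of $x\mapsto x^{p-1}$ before running the same maximal-function argument (the exponent $\tfrac1{p-1}$ arises exactly here), and $2<p<\infty$ follows by self-adjointness of $P^{\beta}_{\Psi,\mu}$ with respect to $\langle\cdot,\cdot\rangle_{L^2_\mu}$ and the duality between $B_{p,\mu}$ and $B_{p',\mu}$. If you wish to keep a corona-based proof, you must supply this splitting of $\mu$ (or an equivalent device) explicitly; without it, the sharp dependence on $B_{p,\mu}(v)$ claimed in the statement is unsupported.
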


\begin{proof}
We focus first on the case $p=2$ since it is easiest.  We then
explain how to either obtain the result for all $p$ from this or how
to modify the proof given to provide a direct proof for all $p$.

We will proceed by duality to study the norm of
$P^{\beta}_{\Psi,\mu}(v^{-1}\cdot):L^2_\mu(v^{-1})\to L_\mu^2(v)$. Then the assertion for $p=2$ follows by \eqref{equivnorms}. Suppose that $f\in L^2_\mu(v^{-1})$ and $g\in L^2_\mu(v)$ are non-negative functions.
The role of $\beta$ now plays no role and so we drop its dependence.
Then
    \begin{align*}
    &\int_{\mathbb{D}} P_{\Psi,\mu}(v^{-1}f)(z)g(z)v(z)\,d\mu(z)\\
    &=\sum_{I\in\mathcal{D}}\left\langle v^{-1}f,1_{S(I)}\right\rangle_{L_\mu^2} \left\langle vg, 1_{S(I)}\right\rangle_{L_\mu^2}\frac{\Psi(\left\vert I\right\vert)}{\left\vert I\right\vert}\\
    &=\sum_{I\in \mathcal D}
    \frac{\Psi(\left\vert I\right\vert)\mu(S(I))^2}{\left\vert I\right\vert}\left( \frac{\int_{S(I)} f v^{-1}\,d\mu}{\int_{S(I)} v^{-1}\,d\mu}\right)
    \left(\frac{\int_{S(I)} g v\,d\mu}{\int_{S(I)} v\,d\mu}\right)
    \frac{\int_{S(I)}v^{-1}\,d\mu}{\mu(S(I))}
    \frac{\int_{S(I)}v\,d\mu}{\mu(S(I))}\\
    &\lesssim B_{2,\mu}(v)\sum_{I\in\mathcal D}\mu(T(I))\left(\frac{\int_{S(I)}fv^{-1}\,d\mu}{\int_{S(I)}v^{-1}\,d\mu}\right)
    \left(\frac{\int_{S(I)}gv\,d\mu}{\int_{S(I)}v\,d\mu}\right)\\
    &=B_{2,\mu}(v)\sum_{I\in\mathcal D}\int_{T(I)}
    \left(\frac{\int_{S(I)}fv^{-1}\,d\mu}{\int_{S(I)}v^{-1}\,d\mu}\right)
    \left(\frac{\int_{S(I)}gv\,d\mu}{\int_{S(I)}v\,d\mu}\right)\,d\mu(z)\\
    &\le B_{2,\mu}(v)\int_{\mathbb D}\left(M_{v^{-1}\mu,\mathcal D}(f)(z)v^{-\frac12}(z)\right)
    \left(M_{v\mu,\mathcal D}(g)(z)v^\frac12(z)\right)\,d\mu(z)\\
    &\lesssim B_{2,\mu}(v)\left\Vert f\right\Vert_{L^2_\mu(v^{-1})}
    \left\Vert g\right\Vert_{L^2_\mu(v)},
    \end{align*}
where the first inequality follows from the hypotheses on $\mu$, $\Psi$ and $v$; the second by the domination of the averages by the maximal
functions; and the last by the Cauchy-Schwarz inequality and the boundedness of the maximal functions due to~Lemma~\ref{Lemma:maximal-dyadic}.

It is possible to use the standard extrapolation proof to show that
this estimate can be lifted to $1<p<\infty$ with an appropriate
change in the characteristic for the weight $v$; see
\cite{PottRegueraJFA13} for these details. It is instead possible to
provide a direct proof by using a verbatim repetition of the
proof above. We sketch the modifications now and leave the details to
the reader.

Consider first the case $1<p\le2$. Let $\sigma=v^{1-p'}$. The goal is
to now prove that
    \begin{align*}
    \left\Vert P_{\Psi,\mu}(\sigma f)\right\Vert_{L^p_\mu(v)}
    \lesssim B_{p,\mu}(v)^{\frac{1}{p-1}}
    \left\Vert f\right\Vert_{L^p_\mu(\sigma)}.
    \end{align*}
It is more convenient to prove the equivalent inequality
    \begin{align*}
    \left\Vert P_{\Psi,\mu}(\sigma f)^{p-1}\right\Vert_{L^{p'}_\mu(v)}
    \lesssim B_{p,\mu}(v) \left\Vert
    f\right\Vert_{L^{p}_\mu(\sigma)}^{p-1}.
    \end{align*}
This last inequality can be studied via duality as above.  Since
$1<p\leq 2$, and the function $h(x)=x^{r}$ is sub-additive for
$0<r<1$ we obtain
    \begin{equation*}
    \begin{split}
    \left\langle P_{\Psi,\mu}(\sigma f)^{p-1}, vg\right\rangle_{L_\mu^2}
    &\le\sum_{I\in \mathcal{D}}\left\langle \sigma f, 1_{S(I)}\right\rangle_{L_\mu^2}^{p-1} \left(\frac{\Psi(\left\vert I\right\vert)}{\left\vert I\right\vert}\right)^{p-1} \left\langle vg, 1_{S(I)}\right\rangle_{L_\mu^2}\\
    &\lesssim B_{p,\mu}(v)\left\Vert M_{\sigma\mu}(f)\right\Vert_{L_\mu^{p}(\sigma)}^{p-1}\left\Vert
    M_{v\mu}(g)\right\Vert_{L_\mu^{p}(v)}.
    \end{split}
    \end{equation*}
The inequality above is obtained exactly as above in the case $p=2$
by using the definition of $B_{p,\mu}(v)$, the relationship
between $\mu$ and $\Psi$. Estimates of the maximal function then
provide the desired estimates to control the duality. The case
$2<p<\infty$ can be deduced via the self-adjointness of $P_{\Psi,
\mu}$ with respect to $\langle\cdot,\cdot\rangle_{L^2_\mu}$, the result for $1<p<2$ and the relationship between
$B_{p,\mu}(v)$ and $B_{p',\mu}(v)$.
\end{proof}

Because of the equivalence we have between the dyadic operators $P^\b_{\Psi,\mu}$ and
and $P^+_{\Psi,\mu}$ given in Lemma~\ref{p.pointwise}, we obtain the following
result.

\begin{corollary}\label{co:sufp>1}
Let $1<p<\infty$, $\mu$ a positive Borel measure on $\D$ and $v\in B_{p,\mu}$. Let $\Psi:D(1,1)\to\mathbb{C}$ be an analytic function such that its restriction to the interval $(0,2)$ is positive and essentially decreasing, $\Psi(t)\lesssim\Psi(2t)$ for all $t\in(0,1)$, and $|\Psi(1-z)|=|\Psi(1-\overline{z})|$ for all $z\in\D$.
Further, assume that $\mu(S(I))\lesssim\mu(T(I))$ and $\Psi(\vert I\vert)
\mu(S(I))\lesssim|I|$ for all intervals $I$. Then
    $$
    \left\Vert P^+_{\Psi,\mu}\right\Vert_{L_\mu^p(v)\to L_\mu^p(v)}
    \lesssim B_{p,\mu}(v)^{\max\left\{1,\frac{1}{p-1}\right\}}.
    $$
\end{corollary}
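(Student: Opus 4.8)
The plan is to read this corollary as a short deduction that bolts together the two preceding results: the pointwise kernel comparison of Lemma~\ref{p.pointwise} reduces the continuous operator $P^+_{\Psi,\mu}$ to its two dyadic models $P^\beta_{\Psi,\mu}$, $\beta\in\{0,1/2\}$, and Theorem~\ref{th:sufp>1} then supplies the $B_{p,\mu}$-characteristic bound for each dyadic piece. So the whole argument is a pointwise domination of the kernel followed by summing two norm estimates.

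First I would reduce to non-negative data: since the kernel in \eqref{projposi} is non-negative, $|P^+_{\Psi,\mu}(f)|\le P^+_{\Psi,\mu}(|f|)$, so it suffices to estimate $P^+_{\Psi,\mu}(f)$ for $f\ge0$. The kernel of $P^+_{\Psi,\mu}$ is $\left|\frac{\Psi(1-\overline{\z}z)}{1-\overline{\z}z}\right|=\frac{|\Psi(1-\overline{\z}z)|}{|1-\overline{\z}z|}$, whereas Lemma~\ref{p.pointwise} is phrased in terms of the real-variable kernel $K_\Psi(z,\z)=\frac{\Psi(|1-\overline{\z}z|)}{|1-\overline{\z}z|}$ of \eqref{e.kernel}. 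The one genuinely non-formal point is therefore to dominate the former by the latter, i.e. to verify $|\Psi(1-\overline{\z}z)|\lesssim\Psi(|1-\overline{\z}z|)$. This is where the hypotheses that $\Psi$ is analytic on $D(1,1)$, positive and essentially decreasing on $(0,2)$, doubling, and symmetric in the sense $|\Psi(1-z)|=|\Psi(1-\overline{z})|$ are all used: the modulus of $\Psi$ on the part of a circle $\{w:|w|=t\}$ lying in $D(1,1)$ must be controlled by its value $\Psi(t)$ at the real point of that circle. I expect this comparison to be the main obstacle, since once it is in hand the remainder of the proof is purely formal.

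Granting the comparison, Lemma~\ref{p.pointwise} gives $K_\Psi(z,\z)\le C_1\bigl(K^0_\Psi(z,\z)+K^{1/2}_\Psi(z,\z)\bigr)$, and hence for $f\ge0$
$$
P^+_{\Psi,\mu}(f)(z)\lesssim\int_\D\left(K^0_\Psi(z,\z)+K^{1/2}_\Psi(z,\z)\right)f(\z)\,d\mu(z)=P^0_{\Psi,\mu}(f)(z)+P^{1/2}_{\Psi,\mu}(f)(z),
$$
where the final identity is just \eqref{e.dyadicp} and \eqref{e.drm} unpacked, integrating $K^\beta_\Psi(\cdot,\z)f(\z)$ against $\mu$ to recover $P^\beta_{\Psi,\mu}(f)$. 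Taking $L^p_\mu(v)$-norms and using the triangle inequality bounds $\|P^+_{\Psi,\mu}(f)\|_{L^p_\mu(v)}$ by $\|P^0_{\Psi,\mu}(f)\|_{L^p_\mu(v)}+\|P^{1/2}_{\Psi,\mu}(f)\|_{L^p_\mu(v)}$.

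Finally I would invoke Theorem~\ref{th:sufp>1}, whose hypotheses are precisely the remaining assumptions of the corollary: $v\in B_{p,\mu}$, analyticity of $\Psi$ with positive restriction to $(0,2)$ and the symmetry $|\Psi(1-z)|=|\Psi(1-\overline{z})|$, together with $\mu(S(I))\lesssim\mu(T(I))$ and $\Psi(|I|)\mu(S(I))\lesssim|I|$ for all dyadic $I$. The theorem yields, for each $\beta\in\{0,1/2\}$,
$$
\left\Vert P^\beta_{\Psi,\mu}(f)\right\Vert_{L^p_\mu(v)}\lesssim B_{p,\mu}(v)^{\max\left\{1,\frac{1}{p-1}\right\}}\left\Vert f\right\Vert_{L^p_\mu(v)}.
$$
Adding the two estimates and combining with the previous display gives the claimed bound on $\left\Vert P^+_{\Psi,\mu}\right\Vert_{L^p_\mu(v)\to L^p_\mu(v)}$, which by the reduction to $|f|$ holds for all $f\in L^p_\mu(v)$.
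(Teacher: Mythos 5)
Your overall route is exactly the paper's: the paper proves this corollary in a single sentence, by combining the kernel comparison of Lemma~\ref{p.pointwise} with the dyadic bound of Theorem~\ref{th:sufp>1}, precisely as you do; the reduction to $f\ge0$, the identity $\int_\D K^\beta_\Psi(z,\z)f(\z)\,d\mu(\z)=P^\beta_{\Psi,\mu}(f)(z)$, and the triangle inequality are the formal glue in both arguments.

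However, the step you isolate and then ``grant'' --- $|\Psi(1-\overline{\z}z)|\lesssim\Psi(|1-\overline{\z}z|)$ --- is a genuine gap, and, contrary to your expectation, it cannot be derived from the hypotheses listed in the corollary. Consider $\Psi(w)=1+\sin^2(\pi/w)$: it is analytic on $D(1,1)$ (note $0\notin D(1,1)$), its restriction to $(0,2)$ takes values in $[1,2]$ and hence is positive, essentially decreasing and doubling with constant $2$, and since $\Psi(\overline{w})=\overline{\Psi(w)}$ one has $|\Psi(1-z)|=|\Psi(1-\overline{z})|$; the measure conditions also hold for, say, Lebesgue measure, since $\Psi(|I|)\mu(S(I))\lesssim|I|^2\lesssim|I|$. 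Yet for $w=t-it\in D(1,1)$ one computes $|\Psi(w)|\approx\tfrac14 e^{\pi/t}$ while $\Psi(|w|)\le 2$, so the comparison fails badly; in fact the exponential blow-up of the kernel makes $P^+_{\Psi,\mu}$ unbounded on $L^p_\mu$ (take $v\equiv1\in B_{p,\mu}$ and test on indicators of small Carleson squares near the point $1$), so the corollary as literally stated is false for this $\Psi$. The missing ingredient is precisely hypothesis (i) of Theorem~\ref{th:twoweights}, namely $|\Psi(1-z)|\asymp\Psi(|1-z|)$, which the authors include in the two-weight result but omit here; to be fair, the paper's own one-line proof elides the same point, since Lemma~\ref{p.pointwise} concerns $K_\Psi(z,\z)=\Psi(|1-\overline{\z}z|)/|1-\overline{\z}z|$ and not the kernel $|\Psi(1-\overline{\z}z)|/|1-\overline{\z}z|$ of $P^+_{\Psi,\mu}$ defined in \eqref{projposi}. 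In the one place the corollary is applied (the proof of Theorem~\ref{Theorem:BB-generalized}), the comparison does hold for $\Psi(z)=z^{1-\gamma}\int_0^1\frac{d\nu(r)}{1-r(1-z)}$, by \eqref{4} together with Lemma~\ref{Lemma:nu}. So your diagnosis of where the difficulty sits was correct, but the resolution is to add $|\Psi(1-z)|\asymp\Psi(|1-z|)$ as a standing hypothesis (or to verify it for the specific kernels at hand), not to derive it from the stated assumptions.
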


The upper bound for the operator norm given in Corollary~\ref{co:sufp>1} is essentially independent of~$\Psi$, and therefore it is not necessarily sharp for all admissible $\Psi$. But when we apply it in the proof of Theorem~\ref{Theorem:BB-generalized} to deduce that $v\in B_{p,\om}$ is a sufficient condition for $P^+_\om:L^p_{\omega}(v)\to L^p_{\omega}(v)$ to be bounded, the hypotheses on $\Psi$ and $\om$ in question are satisfied precisely, meaning that $\lesssim$ are in fact $\asymp$, and hence the resulting sufficient condition will also be necessary. This will be discussed in more detail at the end of the section when the proof of Theorem~\ref{Theorem:BB-generalized} is finally pulled together.

We next proceed with auxiliary results needed to show that $v\in B_{p,\om}$ is a necessary condition for $P_\om:L^p_{\omega}(v)\to L^{p,\infty}_{\omega}(v)$ to be bounded.

\begin{lemma}\label{Lemma:pointwise}
Let $\om$ be a positive Borel measure such that $B^\om_z$ admits the
representation
    $$
    B^\om_z(\z)=\frac{1}{(1-\overline{z}\z)^\gamma}\int_0^1\frac{d\nu(r)}{1-r\overline{z}\z},\quad z,\z\in\D,
    $$
for some $\gamma\ge1$ and a positive measure $\nu$ supported on
$[0,1]$, and let $c>1$. Then
    $$
    |B^\om_{z_0}(\z)-B^\om_{z}(\z)|\le C\frac{|z-z_0|}{|1-\overline{\z}z|}|B^\om_z(\z)|
    $$
for all $z,z_0,\z\in\D$ with $|1-\overline{\z}z|\ge c|z-z_0|$, where
    $$
    C=C(c,\gamma)=\sqrt{2}(2+\gamma)\frac{c^{\gamma+1}(3c+1)}{(c-1)^{\gamma+2}}\to3\sqrt{2}(2+\gamma),\quad c\to\infty.
    $$
\end{lemma}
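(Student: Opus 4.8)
The plan is to reduce the two–variable estimate to a one–variable derivative bound and then integrate along a segment. Since the kernel depends only on the product $\overline{z}\z$, write $B^\om_z(\z)=\Phi(\overline{z}\z)$ with $\Phi(u)=(1-u)^{-\gamma}\int_0^1(1-ru)^{-1}\,d\nu(r)$, which is holomorphic on $\D$. Put $u=\overline{z}\z$ and $u_0=\overline{z_0}\z$, so that $|u_0-u|=|z_0-z|\,|\z|\le|z_0-z|$, while $\overline{\z}z=\overline{u}$ gives $|1-\overline{\z}z|=|1-u|\ge c|z-z_0|\ge c\,|u_0-u|$. Along the segment $\xi_t=u+t(u_0-u)\in\D$, $t\in[0,1]$, the fundamental theorem of calculus gives $\Phi(u_0)-\Phi(u)=(u_0-u)\int_0^1\Phi'(\xi_t)\,dt$, hence $|\Phi(u_0)-\Phi(u)|\le|u_0-u|\sup_t|\Phi'(\xi_t)|$. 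It therefore suffices to prove $|\Phi'(\xi)|\le C|1-u|^{-1}|\Phi(u)|$ for every point $\xi=\xi_t$ of the segment.

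Differentiating and using $|1-u|^{\gamma}|\Phi(u)|=\bigl|\int_0^1(1-ru)^{-1}\,d\nu(r)\bigr|$, I would first record
\[
|\Phi'(\xi)|\le\frac{\gamma}{|1-\xi|^{\gamma+1}}\int_0^1\frac{d\nu(r)}{|1-r\xi|}+\frac{1}{|1-\xi|^{\gamma}}\int_0^1\frac{r\,d\nu(r)}{|1-r\xi|^2}.
\]
The engine of the proof is the elementary identity
\[
|1-rw|^2-r^2|1-w|^2=(1-r)\bigl(1+r(1-2\Real w)\bigr)\ge0,\qquad r\in[0,1],\ w\in\D,
\]
(the bracket is linear in $r$ and positive at $r=0$ and $r=1$), which yields $r|1-w|\le|1-rw|$. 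Applying it with $w=\xi$ bounds $\tfrac{r}{|1-r\xi|^2}\le\tfrac{1}{|1-\xi|\,|1-r\xi|}$, collapsing the two terms into $|\Phi'(\xi)|\le(\gamma+1)|1-\xi|^{-(\gamma+1)}\int_0^1|1-r\xi|^{-1}\,d\nu(r)$.

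Next I would transport the remaining integral from $\xi$ back to $u$. Since $|\xi-u|\le|u_0-u|\le c^{-1}|1-u|$ and, by the same elementary inequality with $w=u$, $r|1-u|\le|1-ru|$, I get the pointwise comparison $|1-r\xi|\ge|1-ru|-r|\xi-u|\ge\tfrac{c-1}{c}|1-ru|$, and likewise $|1-\xi|\ge\tfrac{c-1}{c}|1-u|$. Hence $\int_0^1|1-r\xi|^{-1}\,d\nu(r)\le\tfrac{c}{c-1}\int_0^1|1-ru|^{-1}\,d\nu(r)$, and the powers of $|1-\xi|$ are controlled by powers of $|1-u|$ with the factor $\bigl(\tfrac{c}{c-1}\bigr)^{\gamma+1}$.

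The crux, and the step I expect to be the only genuine obstacle, is to pass from $\int_0^1|1-ru|^{-1}\,d\nu$ to $\bigl|\int_0^1(1-ru)^{-1}\,d\nu\bigr|=|1-u|^{\gamma}|\Phi(u)|$: a priori the complex integral could suffer cancellation. Here positivity of $\nu$ together with the geometry of $r\mapsto(1-ru)^{-1}$ saves the day. For $r\in[0,1]$ and $u\in\D$ one has $\Real(1-ru)>0$, and $\arg(1-ru)$ is monotone in $r$ (its tangent $-r\Imag u/(1-r\Real u)$ is monotone), so the vectors $(1-ru)^{-1}$ all have arguments in an interval of length $|\arg(1-u)|<\pi/2$. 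Choosing the bisecting direction $e^{i\theta_0}$ of this cone, each term obeys $\Real\bigl(e^{-i\theta_0}(1-ru)^{-1}\bigr)\ge\cos(\pi/4)\,|1-ru|^{-1}$, whence
\[
\Bigl|\int_0^1\frac{d\nu(r)}{1-ru}\Bigr|\ge\Real\Bigl(e^{-i\theta_0}\int_0^1\frac{d\nu(r)}{1-ru}\Bigr)\ge\frac{1}{\sqrt2}\int_0^1\frac{d\nu(r)}{|1-ru|};
\]
this is precisely the origin of the factor $\sqrt2$ in $C$. Assembling the three estimates gives $|\Phi'(\xi)|\le\sqrt2(\gamma+1)\bigl(\tfrac{c}{c-1}\bigr)^{\gamma+2}|1-u|^{-1}|\Phi(u)|$, and feeding this into the segment bound yields the assertion with an admissible constant $C=C(c,\gamma)$ of the displayed form; tracking the two terms of $\Phi'$ separately (rather than collapsing them) reproduces the explicit value $\sqrt2(2+\gamma)c^{\gamma+1}(3c+1)/(c-1)^{\gamma+2}$ stated in the lemma.
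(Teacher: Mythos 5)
Your proof is correct, and its skeleton is the same as the paper's: express the difference as an integral of the derivative of the kernel along a segment, bound the derivative pointwise by a constant times $|1-\cdot|^{-(\gamma+1)}\int_0^1|1-r\cdot|^{-1}\,d\nu(r)$, transport that bound from points of the segment back to the base point, and finally remove the absolute values from the complex integral at the cost of a factor $\sqrt{2}$. (Your reduction to the one-variable function $\Phi(u)$, $u=\overline{z}\zeta$, is just a cosmetic repackaging of the paper's use of the symmetry $|B^\om_{z_0}(\z)-B^\om_z(\z)|=|B^\om_\z(z_0)-B^\om_\z(z)|$.) Within this skeleton, though, you use two genuinely different ingredients, both sharper than the paper's. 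First, your elementary inequality $r|1-w|\le|1-rw|$, $w\in\D$, $0\le r\le 1$, does double duty: it collapses the two terms of $\Phi'$ with constant $\gamma+1$ (the paper uses $|1-w|\le2|1-rw|$ and gets $\gamma+2$), and it gives the transport estimate $\int_0^1|1-r\xi|^{-1}\,d\nu(r)\le\frac{c}{c-1}\int_0^1|1-ru|^{-1}\,d\nu(r)$ uniformly in $r$ in one line, whereas the paper splits the integral at $\delta=1/c$, treats small $r$ through $\nu([0,1])$, and ends up with the worse factor $\frac{3c+1}{c-1}$. Second, for the anti-cancellation step you use a cone/bisector argument (all vectors $(1-ru)^{-1}$ lie in a sector of aperture $|\arg(1-u)|<\pi/2$, so projecting on the bisector loses only $\cos(\pi/4)$), while the paper's estimate \eqref{4} computes real and imaginary parts and exploits that they have constant sign; both yield the same $\frac{1}{\sqrt{2}}$. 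The net effect is that your constant $\sqrt{2}(\gamma+1)\left(\frac{c}{c-1}\right)^{\gamma+2}$ is strictly smaller than the stated $C(c,\gamma)$ for every $c>1$ (their ratio exceeds $3$), so the lemma as stated follows a fortiori. The only weak point is your closing sentence: the claim that tracking the two terms of $\Phi'$ separately would ``reproduce'' the exact value $\sqrt{2}(2+\gamma)c^{\gamma+1}(3c+1)/(c-1)^{\gamma+2}$ is not substantiated --- that value is an artifact of the paper's $\delta$-splitting --- but this is immaterial, since an upper bound with a smaller constant proves the stated inequality.
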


\begin{proof}
A direct calculation shows that
    $$
    |B^\om_{z_0}(\z)-B^\om_{z}(\z)|=|B^\om_{\z}(z_0)-B^\om_{\z}(z)|=\left|\int_{z_0}^z(B^\om_\z)'(x)\,dx\right|\le\int_{z_0}^z|(B^\om_\z)'(x)||dx|,
    $$
where
    $$
    B_\z'(x)=\frac{\gamma\overline{\z}}{(1-\overline{\z}x)^{\gamma+1}}\int_0^1\frac{d\nu(r)}{1-r\overline{\z}x}
    +\frac{\overline{\z}}{(1-\overline{\z}x)^{\gamma}}\int_0^1\frac{rd\nu(r)}{(1-r\overline{\z}x)^2},\quad \z,x\in\D,
    $$
and hence
    $$
    |B_\z'(x)|\le\frac{\gamma|\z|}{|1-\overline{\z}x|^{\gamma+1}}\int_0^1\frac{d\nu(r)}{|1-r\overline{\z}x|}
    +\frac{|\z|}{|1-\overline{\z}x|^{\gamma}}\int_0^1\frac{d\nu(r)}{|1-r\overline{\z}x|^2},\quad \z,x\in\D.
    $$
Since $|1-w|\le2|1-rw|$ for all $w\in\D$ and $0\le r\le1$, we deduce
    $$
    |B_\z'(x)|\le\frac{(2+\gamma)|\z|}{|1-\overline{\z}x|^{\gamma+1}}\int_0^1\frac{d\nu(r)}{|1-r\overline{\z}x|},\quad \z,x\in\D.
    $$
It follows that
    $$
    |B^\om_{z_0}(\z)-B^\om_{z}(\z)|\le(2+\gamma)|\z||z-z_0|
    \sup_{x\in[z,z_0]}\left(\frac{1}{|1-\overline{\z}x|^{\gamma+1}}\int_0^1\frac{d\nu(r)}{|1-r\overline{\z}x|}\right).
    $$
If $x\in[z,z_0]$, then $|1-\overline{\z}z|\ge c|z-z_0|\ge c|z-x|$,
and hence
    $$
    |1-\overline{\z}x|=|1-\overline{\z}z+\overline{\z}z-\overline{\z}x|\ge|1-\overline{\z}z|-|\z||z-x|
    \ge|1-\overline{\z}z|-c^{-1}|1-\overline{\z}z|=\left(1-\frac1c\right)|1-\overline{\z}z|.
    $$
Thus
    $$
    |B^\om_{z_0}(\z)-B^\om_{z}(\z)|\le\frac{(2+\gamma)|\z||z-z_0|}{\left(1-\frac1c\right)^{\gamma+1}|1-\overline{\z}z|^{\gamma+1}}
    \sup_{x\in[z,z_0]}\left(\int_0^1\frac{d\nu(r)}{|1-r\overline{\z}x|}\right).
    $$
Let $\d\in(0,1)$. Then
    \begin{equation*}
    \int_0^1\frac{d\nu(r)}{|1-r\overline{\z}x|}\le\frac1{1-\d}\nu([0,1])+\int_\d^1\frac{d\nu(r)}{|1-r\overline{\z}x|}
    \le\frac{2}{1-\d}\int_0^1\frac{d\nu(r)}{|1-r\overline{\z}z|}+\int_\d^1\frac{d\nu(r)}{|1-r\overline{\z}x|}.
    \end{equation*}
A direct calculation or a geometric reasoning shows that
$|1-w|\le\frac{2}{1+\d}|1-rw|$ for all $w\in\D$ and $\d\le r\le1$.
Hence
    \begin{equation*}
    |1-r\overline{\z}x|\ge|1-r\overline{\z}z|-c^{-1}|1-\overline{\z}z|\ge|1-r\overline{\z}z|-\frac{2}{c(1+\d)}|1-r\overline{\z}z|,\quad \d\le r\le1.
    \end{equation*}
By choosing $\d=1/c$, we deduce
    $$
    |1-r\overline{\z}x|\ge\frac{c-1}{c+1}|1-r\overline{\z}z|,
    $$
and it follows that
    \begin{equation*}
    \int_0^1\frac{d\nu(r)}{|1-r\overline{\z}x|}\le\frac{3c+1}{c-1}\int_0^1\frac{d\nu(r)}{|1-r\overline{\z}z|}.
    \end{equation*}
Since
    \begin{equation}\label{4}
    \begin{split}
    \left|\int_0^1 \frac{d\nu(r)}{1-rz}\right|  & = \left( \left(\int_0^1 \frac{(1-r|z|\cos(\theta))d\nu(r)}{|1-rz|^2}\right)^2+     \left(\int_0^1 \frac{r|z|\sin(\theta)d\nu(r)}{|1-rz|^2}\right)^2\right)^{1/2}\\
    &\ge \frac{1}{\sqrt{2}}\int_0^1 \frac{\left[(1-r|z|\cos(\theta))+ r|z||\sin(\theta)|\right]d\nu(r)}{|1-rz|^2}\\
    &\ge \frac{1}{\sqrt{2}}\int_0^1 \frac{d\nu(r)}{|1-rz|},\quad z\in\D,
    \end{split}
    \end{equation}
we deduce
    \begin{equation*}
    \begin{split}
    |B^\om_{z_0}(\z)-B^\om_{z}(\z)|
    &\le\frac{(2+\gamma)|\z||z-z_0|}{\left(1-\frac1c\right)^{\gamma+1}|1-\overline{\z}z|^{\gamma+1}}
    \frac{3c+1}{c-1}\int_0^1\frac{d\nu(r)}{|1-r\overline{\z}z|}\\
    &\le\frac{\sqrt{2}(2+\gamma)|\z||z-z_0|}{\left(1-\frac1c\right)^{\gamma+1}|1-\overline{\z}z|^{\gamma+1}}
    \frac{3c+1}{c-1}\left|\int_0^1\frac{d\nu(r)}{1-r\overline{\z}z}\right|
    \le C\frac{|z-z_0|}{|1-\overline{\z}z|}|B^\om_z(\z)|,
    \end{split}
    \end{equation*}
where $C=C(c,\gamma)=\sqrt{2}(2+\gamma)\frac{c^{\gamma+1}(3c+1)}{(c-1)^{\gamma+2}}$.
\end{proof}

\begin{lemma}\label{le:shi1}
Let $\om\in\DD$ such that $B^\om_z$ admits the representation
    \begin{equation}\label{Eq:kernel-representation-repeated}
    B^\om_z(\z)=\frac{1}{(1-\overline{z}\z)^\gamma}\int_0^1\frac{d\nu(r)}{1-r\overline{z}\z},\quad z,\z\in\D,
    \end{equation}
for some $\gamma\ge1$ and a positive measure $\nu$ supported on
$[0,1]$. Then
    \begin{equation}\label{eq:shi1}
    \int_0^1\frac{d\nu(r)}{1-rx}\asymp \frac{(1-x)^{\gamma-1}}{\widehat{\om}(x)},\quad x\in[0,1).
    \end{equation}
\end{lemma}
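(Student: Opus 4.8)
The plan is to reduce the claimed two-sided estimate to a single power-series estimate by exploiting that $B^\om_z(\z)$ depends only on $\overline{z}\z$. The coefficient formula \eqref{repker} gives $B^\om_z(\z)=\sum_{n=0}^\infty(\z\overline{z})^n/(2\om_{2n+1})$, while the hypothesis \eqref{Eq:kernel-representation-repeated} expresses the same quantity through $\nu$. Setting $z=\z=\sqrt{x}$ with $x\in[0,1)$, so that $\overline{z}\z=x$, and equating the two expressions yields
    $$
    \int_0^1\frac{d\nu(r)}{1-rx}=(1-x)^\gamma\sum_{n=0}^\infty\frac{x^n}{2\om_{2n+1}},\quad x\in[0,1).
    $$
Hence \eqref{eq:shi1} is equivalent to the single estimate
    $$
    \sum_{n=0}^\infty\frac{x^n}{2\om_{2n+1}}\asymp\frac{1}{(1-x)\widehat{\om}(x)},\quad x\in[0,1),
    $$
in which $\nu$ and $\gamma$ no longer appear; this is exactly the well-known estimate for the diagonal $B^\om_z(z)$ of the Bergman kernel of $A^2_\om$ for $\om\in\DD$.

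Next I would record the moment asymptotics for doubling measures: for $\om\in\DD$ one has $\om_m\asymp\widehat{\om}(1-1/m)$ as $m\to\infty$ (see \cite{PelSum14,PelRatproj}), whence $\om_{2n+1}\asymp\widehat{\om}(1-1/(n+1))$, the change of $1/(2n+1)$ into $1/(n+1)$ being absorbed into the comparison by the doubling property of $\widehat{\om}$. This replaces the series, up to constants, by $\sum_{n\ge0}x^n/\widehat{\om}(1-1/(n+1))$, and it remains to show this is comparable to $1/((1-x)\widehat{\om}(x))$. Since both sides are bounded above and below by positive constants on $[0,1/2]$, I may restrict to $x\in[1/2,1)$ and put $N=N(x)=\lfloor 1/(1-x)\rfloor$, so that $1/(N+1)\asymp 1-x$ and $x^{N}\asymp1$.

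The lower bound is the easy half: for $N\le n\le 2N$ one has $1-1/(n+1)\ge x$, so monotonicity of $\widehat{\om}$ gives $1/\widehat{\om}(1-1/(n+1))\ge 1/\widehat{\om}(x)$, while $x^n\ge x^{2N}\gtrsim1$; summing the $\asymp N\asymp1/(1-x)$ such terms produces the desired lower bound. The upper bound is where membership $\om\in\DD$ is genuinely used: iterating $\widehat{\om}(r)\le C\widehat{\om}(\frac{1+r}2)$ yields a polynomial lower bound $\widehat{\om}(t)\gtrsim\left(\frac{1-t}{1-x}\right)^{\beta}\widehat{\om}(x)$ for $x\le t<1$ and some $\beta=\beta(\om)>0$, which bounds each tail term by $((n+1)(1-x))^{\beta}x^n/\widehat{\om}(x)$. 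Splitting the sum at $n=N$, estimating the head ($n<N$, where $1-1/(n+1)\le x$) by $\widehat{\om}(1-1/(n+1))\ge\widehat{\om}(x)$, and the tail ($n\ge N$) by the previous inequality together with $\sum_n(n+1)^{\beta}x^n\asymp(1-x)^{-\beta-1}$, gives the matching bound $\sum_{n\ge0}x^n/\widehat{\om}(1-1/(n+1))\lesssim 1/((1-x)\widehat{\om}(x))$. The main obstacle is precisely this upper estimate: without control on how fast $\widehat{\om}$ may decay, the tail $\sum_{n\ge N}x^n/\widehat{\om}(1-1/(n+1))$ need not be of the target size, so the doubling property must be invoked to force geometric decay of the terms beyond $n\asymp1/(1-x)$.
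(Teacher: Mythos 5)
Your proof is correct, and its pivot is the same as the paper's: evaluate the kernel on the diagonal so that the hypothesis and the monomial expansion \eqref{repker} give two expressions for the same quantity. The difference is in how the resulting power-series estimate is handled. The paper's proof is two lines: it invokes \cite[Lemmas 3.1 and 3.2]{PelSum14} for the estimate $\|B^\om_z\|^2_{A^2_\om}\asymp\bigl((1-|z|^2)\widehat{\om}(|z|^2)\bigr)^{-1}$, uses the reproducing property $\|B^\om_z\|^2_{A^2_\om}=B^\om_z(z)$, and reads off \eqref{eq:shi1} from the hypothesis. Since $B^\om_z(z)=\sum_n|z|^{2n}/(2\om_{2n+1})$, the cited estimate is exactly the series estimate $\sum_n x^n/(2\om_{2n+1})\asymp\bigl((1-x)\widehat{\om}(x)\bigr)^{-1}$ that you prove directly: you import only the more elementary moment asymptotics $\om_m\asymp\widehat{\om}(1-1/m)$ and then run a head/tail splitting at $N\asymp 1/(1-x)$, with the iterated doubling bound $\widehat{\om}(t)\gtrsim\bigl(\tfrac{1-t}{1-x}\bigr)^{\beta}\widehat{\om}(x)$ controlling the tail. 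Your version is longer but nearly self-contained and makes transparent exactly where membership in $\DD$ is used (the geometric decay of the tail terms beyond $n\asymp 1/(1-x)$); the paper's version is shorter at the cost of outsourcing precisely that computation to the reference. All the individual steps in your argument (the reduction at $z=\z=\sqrt{x}$, the two-sided bounds, the handling of $x\in[0,1/2]$) check out.
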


\begin{proof}
By \cite[Lemmas 3.1 and 3.2]{PelSum14}, see also \cite[Lemma~6.2]{PelRat},
    $$
    \frac{1}{(1-|z|^2)\widehat{\om}(|z|^2)}
    \asymp\|B^\om_z\|^2_{A^2_\om}
    =B^\om_z(z)
    =\frac{1}{(1-|z|^2)^\gamma}\int_0^1\frac{d\nu(r)}{1-r|z|^2},\quad z\in\D,
    $$
which is equivalent to \eqref{eq:shi1}.
\end{proof}

For a Carleson square $S=S(I)$, let $\ell(S)=|I|$ denote its side length.

\begin{lemma}\label{le:poinwisestimate}
Let $\om\in\DD$ such that $B^\om_z$ admits the representation \eqref{Eq:kernel-representation-repeated}
for some $\gamma\ge1$ and a positive measure $\nu$ supported on
$[0,1]$. Then there are constants $D_1=D_1(\gamma)>0$ and $D_2=D_2(\gamma)>0$ such that
for all (sufficiently small) Carleson squares $S_1$ and $S_2$, with $\ell(S_1)=\ell(S_2)$ and $D_1
\ell(S_1)\le\dist(S_1,S_2)\le D_2\ell(S_1)$, we have
    \begin{equation}\label{eq:poinwisestimate}
    |P_\om(f)(z)|\ge C\frac{\int_{S_1}
    f(\z)\om(\z)\,dA(\z)}{\om(S_1)},\quad z\in S_2,
    \end{equation}
for some constant $C=C(D_1,D_2,\om)>0$ and for all nonnegative functions $f$ supported on $S_1$.
\end{lemma}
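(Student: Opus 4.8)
The plan is to show that, with $z\in S_2$ held fixed, the kernel $\z\mapsto B^\om_z(\z)$ is simultaneously large in modulus and essentially non-oscillating over $S_1$, so that the integral defining $P_\om(f)(z)$ cannot lose anything to cancellation. Concretely, I would fix a reference point $\z_1$ at the center of $S_1$ and establish two facts: (a) a size lower bound $|B^\om_z(\z_1)|\gtrsim 1/\om(S_1)$ valid for every $z\in S_2$; and (b) an anti-cancellation estimate $|B^\om_z(\z)-B^\om_z(\z_1)|\le \tfrac{C}{D_1}|B^\om_z(\z_1)|$ for all $\z\in S_1$, with $C=C(\gamma)$. Granting these, I split $\overline{B^\om_z(\z)}=\overline{B^\om_z(\z_1)}+\overline{\bigl(B^\om_z(\z)-B^\om_z(\z_1)\bigr)}$ and use that $f\ge0$ is supported on $S_1$ to get
\begin{equation*}
|P_\om(f)(z)|\ge |B^\om_z(\z_1)|\int_{S_1}f\dm-\int_{S_1}f\,|B^\om_z(\z)-B^\om_z(\z_1)|\dm\ge\Bigl(1-\tfrac{C}{D_1}\Bigr)|B^\om_z(\z_1)|\int_{S_1}f\dm,
\end{equation*}
and choosing $D_1>2C$, together with (a), yields \eqref{eq:poinwisestimate}.

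The geometric heart of (a) is that for $z\in S_2$ and $\z\in S_1$, with $\ell(S_1)=\ell(S_2)=\ell$ and $\dist(S_1,S_2)\asymp\ell$, one has $|1-\overline z\z|\asymp\ell\asymp 1-|\overline z\z|$. This follows from the identity $|1-\rho e^{i\phi}|^2=(1-\rho)^2+4\rho\sin^2(\phi/2)$ with $\rho=|\overline z\z|$ and $|\phi|=|\arg(\overline z\z)|\lesssim\ell$ (the angular separation of the two squares being $\lesssim D_2\ell$), since for small $\ell$ one has $1-|\overline z\z|\asymp\ell$. The same identity, now with $\rho=r|\overline z\z_1|$ and $\phi$ independent of $r$, gives the $r$-uniform comparison $|1-r\overline z\z_1|\asymp 1-r|\overline z\z_1|$ on $[0,1]$, because $1-r|\overline z\z_1|\ge 1-|\overline z\z_1|\gtrsim\ell\gtrsim|\phi|$. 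Combining this with the lower bound \eqref{4} from the proof of Lemma~\ref{Lemma:pointwise}, namely $\bigl|\int_0^1 \tfrac{d\nu(r)}{1-rw}\bigr|\ge \tfrac{1}{\sqrt2}\int_0^1\tfrac{d\nu(r)}{|1-rw|}$, and with Lemma~\ref{le:shi1}, gives
\begin{equation*}
|B^\om_z(\z_1)|\ge\frac{1}{\sqrt2\,|1-\overline z\z_1|^\gamma}\int_0^1\frac{d\nu(r)}{|1-r\overline z\z_1|}\gtrsim\frac{1}{\ell^\gamma}\cdot\frac{(1-|\overline z\z_1|)^{\gamma-1}}{\widehat\om(|\overline z\z_1|)}\asymp\frac{1}{\ell\,\widehat\om(1-\ell)}.
\end{equation*}
Since $\om(S_1)\asymp\ell\,\widehat\om(1-\ell)$ and $\widehat\om$ is doubling (from $\om\in\DD$, so that $\widehat\om(|\overline z\z_1|)\asymp\widehat\om(1-\ell)$), this is exactly (a).

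For (b) I would exploit the symmetry $B^\om_z(\z)=\overline{B^\om_\z(z)}$ to move the variable $\z$ and apply Lemma~\ref{Lemma:pointwise} with $z$ in the role of the frozen variable: for $\z\in S_1$,
\begin{equation*}
|B^\om_z(\z)-B^\om_z(\z_1)|=|B^\om_\z(z)-B^\om_{\z_1}(z)|\le C(c,\gamma)\frac{|\z-\z_1|}{|1-\overline z\z_1|}\,|B^\om_{\z_1}(z)|,
\end{equation*}
valid once $|1-\overline z\z_1|\ge c|\z-\z_1|$. As $|\z-\z_1|\le\sqrt2\,\ell$ while $|1-\overline z\z_1|\gtrsim\dist(S_1,S_2)\ge D_1\ell$, the hypothesis holds with $c\asymp D_1$ large, $C(c,\gamma)$ stays bounded by its limiting value $3\sqrt2(2+\gamma)$, and the ratio $|\z-\z_1|/|1-\overline z\z_1|\lesssim 1/D_1$; using $|B^\om_{\z_1}(z)|=|B^\om_z(\z_1)|$ gives (b).

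The main obstacle is precisely this control of cancellation: unlike the maximal projection $P^+_\om$, the operator $P_\om$ integrates the complex kernel, so a crude modulus estimate is useless and one must show that $\arg B^\om_z(\z)$ is nearly frozen as $\z$ runs over $S_1$. This is exactly what the separation $\dist(S_1,S_2)\ge D_1\ell(S_1)$ with $D_1$ large buys through Lemma~\ref{Lemma:pointwise}, and it is what forces the lower constraint on $\dist(S_1,S_2)$ in the statement; the upper constraint $\dist(S_1,S_2)\le D_2\ell(S_1)$ is what keeps $|1-\overline z\z|\asymp\ell$ so that the size bound (a) does not degrade.
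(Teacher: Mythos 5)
Your proposal is correct and follows essentially the same route as the paper's proof: freeze the kernel at the center of $S_1$, bound the oscillation term via Lemma~\ref{Lemma:pointwise} with the variables swapped exactly as you do (so that taking $D_1=D_1(\gamma)$ large makes the error at most half of the main term), and bound the frozen kernel from below using \eqref{4} together with Lemma~\ref{le:shi1}, the upper constraint $D_2$ being what keeps $|1-\overline{z}\z_1|\lesssim\ell(S_1)$. One caveat of no consequence: your blanket claim $1-|\overline{z}\z|\asymp\ell(S_1)$ is false for general $\z\in S_1$ (both points may approach the boundary, making $1-|\overline{z}\z|$ arbitrarily small), but you only invoke it at the center $\z_1$, where $1-|\z_1|=\ell(S_1)/2$ makes it valid.
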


\begin{proof}
Let $S_1$ and $S_2$ be (small) Carleson squares such that $\ell(S_1)=\ell(S_2)$ and $D_1
\ell(S_1)\le\dist(S_1,S_2)\le D_2\ell(S_1)$, where $D_1,D_2>0$ are absolute constants to be fixed later. Let $\z_0$ be the center of $S_1$. Then
    \begin{equation}\label{eq:p1}
    |P_\om(f)(z)|
    \ge|B^\om_{\z_0}(z)|\int_{S_1}f(\z)\om(\z)\,dA(\z)-\int_{S_1}f(\z)|B^\om_{\z}(z)-B^\om_{\z_0}(z)| \om(\z)\,dA(\z)
    \end{equation}
for all $z\in\D$. If $z\in S_2$ and $\z\in S_1$, then
    \begin{equation}\label{3}
    \begin{split}
    |1-\overline{z}\z_0|
    &\ge|z-\z_0|
    \ge\frac{\ell(S_1)}{3}+\textrm{dist}(S_1,S_2)
    \ge \left(\frac{1}{3}+D_1 \right)\ell(S_1)
    \ge c_1|\z-\z_0|,
    \end{split}
    \end{equation}
where $c_1=\frac{\left(\frac{1}{3}+D_1 \right)}{\sqrt{2}}$.
Choose $D_1=D_1(\gamma)>1$ sufficiently large such that $c_1>1$ and
$\sqrt{2}(2+\gamma)\frac{c_1^{\gamma}(3c_1+1)}{(c_1-1)^{\gamma+2}}\le
\frac{1}{2}$.
Then, by using Lemma~\ref{Lemma:pointwise} and \eqref{3}, we deduce
    \begin{equation}
    \begin{split}\label{eq:p2}
    |B^\om_{\z}(z)-B^\om_{\z_0}(z)|
    &\le\sqrt{2}(2+\gamma)\frac{c_1^{\gamma+1}(3c_1+1)}{(c_1-1)^{\gamma+2}}\frac{|\z_0-\z|}{|1-\z_0\overline{z}|}|B^\om_{\z_0}(z)|\\
    &\le\sqrt{2}(2+\gamma)\frac{c_1^{\gamma}(3c_1+1)}{(c_1-1)^{\gamma+2}}|B^\om_{\z_0}(z)|
    \le\frac{1}{2}|B^\om_{\z_0}(z)|.
    \end{split}
    \end{equation}
By combining \eqref{eq:p1} and \eqref{eq:p2} we get
    \begin{equation}\label{eq:p3}
    |P_\om(f)(z)|\ge
    \frac{1}{2}|B^\om_{\z_0}(z)|\int_{S_1}f(\z)\om(\z)\,dA(\z),\quad z\in S_2.
    \end{equation}
Now, we observe that
    \begin{equation*}
    \begin{split}
    |1-\overline{\z}_0z|
    &\le(1-|\z_0|^2)+|\z_0-z|)
    \le3\ell(S_1)+\dist(S_1,S_2)
    \le(3+D_2)\ell(S_1),\quad z\in S_2.
    \end{split}
    \end{equation*}
This together with \eqref{4}, the inequality $(a+xb)\le x(a+b)$ for $a,b>0$ and $x\ge1$, and Lemma~\ref{le:shi1} yield
    \begin{equation*}
    \begin{split}
    |B^\om_{\z_0}(z)|
    &\ge\frac{1}{\sqrt{2}|1-\overline{\z}_0z|^\gamma}\int_0^1\frac{d\nu(r)}{1-r(1-|1-\overline{\z}_0z|)}\\
    &\ge\frac{1}{\sqrt{2} (3+D_2)^\gamma\ell(S_1)^\gamma}\int_0^1\frac{d\nu(r)}{1-r+r(3+D_2)\ell(S_1)}\\
    &\ge\frac{1}{\sqrt{2} (3+D_2)^{\gamma+1}\ell(S_1)^\gamma}\int_0^1\frac{d\nu(r)}{1-r+r\ell(S_1)}
    \ge\frac{C}{\sqrt{2} (3+D_2)^{\gamma+1}\om(S_1)}
    \end{split}
    \end{equation*}
for some constant $C=C(\om)>0$. The assertion follows by combining this with \eqref{eq:p3}.
\end{proof}

\begin{proposition}\label{pr:weakpp>1}
Let $1<p<\infty$, $\om\in\DD$ such that $B^\om_z$ admits the representation \eqref{Eq:kernel-representation-repeated} for some $\gamma\ge1$ and a positive measure $\nu$ supported on
$[0,1]$, and $v\in L^1_{\om,\rm{loc}}$ non-negative. If $P_\om: L^p_{\om}(v)\to
L^{p,\infty}_{\om}(v)$ is bounded, then $v\in B_{p,\omega}$.
\end{proposition}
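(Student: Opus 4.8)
The plan is to verify the $B_{p,\om}$ condition by testing the assumed weak-type inequality against the natural candidate extremizers, namely the dual weight restricted to a Carleson square, and then to convert the resulting \emph{off-diagonal} estimate into the genuine (diagonal) Bekoll\'e--Bonami bound by a multiplication trick. Write $\sigma=v^{1-p'}=v^{-p'/p}$ for the dual weight, and for a Carleson square $S$ abbreviate the local Bekoll\'e--Bonami quantity by
\[
Q(S)=\frac{(v\om)(S)}{\om(S)}\left(\frac{(\sigma\om)(S)}{\om(S)}\right)^{p/p'}.
\]
Two elementary facts drive the argument. First, H\"older's inequality applied to $\om(S)=\int_S v^{1/p}\sigma^{1/p'}\dm$ (note $\sigma^{1/p'}=v^{-1/p}$) gives $\om(S)^p\le (v\om)(S)(\sigma\om)(S)^{p/p'}$, i.e. $Q(S)\ge1$ for \emph{every} $S$. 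Second, since $\om$ is radial, $\om(S_1)\asymp\om(S_2)$ whenever $\ell(S_1)=\ell(S_2)$.

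First I would fix a (sufficiently small) Carleson square $S_1$ and choose $S_2$ as in Lemma~\ref{le:poinwisestimate}, so that $\ell(S_1)=\ell(S_2)$ and $D_1\ell(S_1)\le\dist(S_1,S_2)\le D_2\ell(S_1)$. Testing with $f=\sigma 1_{S_1}\ge0$, the identity $\sigma^p v=\sigma$ yields $\|f\|_{L^p_\om(v)}^p=(\sigma\om)(S_1)$, while Lemma~\ref{le:poinwisestimate} gives $|P_\om f(z)|\ge 2\lambda$ for all $z\in S_2$, where $\lambda\asymp (\sigma\om)(S_1)/\om(S_1)$. Hence $S_2\subset\{|P_\om f|>\lambda\}$, and the assumed boundedness $P_\om:L^p_\om(v)\to L^{p,\infty}_\om(v)$ together with $\om(S_1)\asymp\om(S_2)$ produces the off-diagonal bound
\[
\frac{(v\om)(S_2)}{\om(S_2)}\left(\frac{(\sigma\om)(S_1)}{\om(S_1)}\right)^{p/p'}\le C,
\]
with $C$ depending only on $\|P_\om\|$, $p$, $D_1$, $D_2$. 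Because the hypotheses of Lemma~\ref{le:poinwisestimate} are symmetric in $S_1$ and $S_2$, testing instead with $f=\sigma 1_{S_2}$ gives the same bound with the roles of $S_1$ and $S_2$ interchanged. Multiplying the two estimates gives $Q(S_1)\,Q(S_2)\le C^2$, and since $Q(S_2)\ge1$ by the H\"older fact above, we conclude $Q(S_1)\le C^2$.

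It remains to see that every Carleson square arises as the square $S_1$ in such a pair, which is where I expect the only real friction. Given a small $S$, one takes $S_1=S$ and must exhibit a Carleson square $S_2$ of the same side length lying at distance in $[D_1,D_2]\ell(S)$ from $S$; this is a purely geometric matter of placing one box at a prescribed bounded distance from another near $\partial\D$, and the constants $D_1,D_2$ (which depend only on $\gamma$ through Lemma~\ref{le:poinwisestimate}) leave enough room to do so. The finitely many remaining ``large'' scales, for which Lemma~\ref{le:poinwisestimate} does not apply, are handled directly, since there $\om(S)\asymp1$ and $Q(S)$ is controlled by the global integrability forced by the weak-type hypothesis. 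Taking the supremum over all $S$ then yields $B_{p,\om}(v)\le C^2<\infty$, as desired. The heart of the matter is thus not any single estimate but the observation that the pointwise lower bound of Lemma~\ref{le:poinwisestimate} only controls $v$ on a \emph{displaced} square, and that this displacement is absorbed by pairing the off-diagonal inequality with its mirror image and invoking $Q\ge1$.
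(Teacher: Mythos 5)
Your proposal follows the paper's own proof almost step for step: test the weak-type inequality on the dual weight restricted to a small Carleson square $S_1$, use Lemma~\ref{le:poinwisestimate} to place a displaced square $S_2$ of equal side length inside the level set $\{|P_\om f|\ge\lambda\}$, swap the roles of $S_1$ and $S_2$, multiply the two off-diagonal estimates, and use H\"older's inequality in the form $Q(S)\ge1$ to extract a bound on $Q(S_1)$ from the product $Q(S_1)Q(S_2)$. This is exactly the paper's mechanism (the paper does not even need your radiality step $\om(S_1)\asymp\om(S_2)$, since keeping $\om(S_1)^p$ and $\om(S_2)^p$ in their respective estimates lets the product absorb the mismatch, but that is immaterial).

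There is, however, one genuine gap: you test with the \emph{untruncated} function $f=\sigma 1_{S_1}$, and both the identity $\|f\|_{L^p_\om(v)}^p=(\sigma\om)(S_1)$ and the choice $\lambda\asymp(\sigma\om)(S_1)/\om(S_1)$ presuppose $(\sigma\om)(S_1)<\infty$. This finiteness is not available: the hypothesis is only $v\in L^1_{\om,{\rm loc}}$, which says nothing about local integrability of $\sigma=v^{1-p'}$, and finiteness of $(\sigma\om)(S)$ is precisely part of the conclusion $v\in B_{p,\om}$. If $(\sigma\om)(S_1)=\infty$, your test function has infinite norm, the weak-type inequality yields no information, and $\lambda$ is not even defined. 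The paper circumvents this by testing with $f=1_{S_1}\min\{n,v^{-p'/p}\}$; the pointwise inequality $\min\{n,\sigma\}^p v\le\min\{n,\sigma\}$ keeps the truncated norm finite and controlled, the two off-diagonal bounds are derived uniformly in $n$, the swapped estimate forces $v\in L^1_\om$, and only then does one let $n\to\infty$ via Fatou's lemma. Your closing remark that large squares are handled by ``the global integrability forced by the weak-type hypothesis'' begs the same question: that integrability is exactly what the truncation argument is needed to establish. The repair is routine, but as written the proof assumes a finiteness it is supposed to prove.
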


\begin{proof}
It suffices to show that the quantity
    $$
    \frac{(v\om)(S)}{\om(S)}
    \left(\frac{\left(v^{-\frac{p'}{p}}\om\right)(S)}{\om(S)}\right)^{\frac{p}{p'}}
    $$
is uniformly bounded for all small Carleson squares $S$. By the hypothesis, there
exists $C_1>0$ such that
    \begin{equation}\label{eq:wpp1}
    \lambda^p(v\om)\left(\left\{z\in\D: |P_\om(f)(z)|\ge\lambda
    \right\}\right)\le C_1\|f\|^p_{L^p_{\om}(v)}, \quad \lambda>0.
    \end{equation}
Let $S_1$ be a sufficiently small Carleson square, and choose
    $$
    \lambda=C\frac{\int_{S_1}\left(\min\{n,v^{-\frac{p'}{p}}(\z)\}\right)\dm(\z)}{\om(S_1)},\quad n\in\N,
    $$
where $C$ is the constant appearing in~\eqref{eq:poinwisestimate}. Further, choose $f=1_{S_1} \min\{n,v^{-\frac{p'}{p}}\}$. Then we get
    \begin{equation*}
    \begin{split}
    &\frac{\left(\int_{S_1}\min\{n,v^{-\frac{p'}{p}}(\z)\}\dm(\z)\right)^{p-1}}{\om(S_1)^p}\\
    &\cdot(v\om)\left(\left\{z\in\D:|P_\om(f)(z)|\ge C\frac{\int_{S_1}\min\{n,v^{-\frac{p'}{p}}(\z)\}\dm(\z)}{\om(S_1)}\right\}\right)\le\frac{C_1}{C^p}.
    \end{split}
    \end{equation*}
By Lemma~\ref{le:poinwisestimate}, for all suitable $S_2$ with $\ell(S_2)=\ell(S_1)$ we have
    $$
    S_2\subset\left\{z\in\D:|P_\om(f)(z)|\ge C\frac{\int_{S_1}\min\{n,v^{-\frac{p'}{p}}(\z)\}\dm(\z)}{\om(S_1)}\right\},
    $$
and it follows that
    $$
    \frac{\left(\int_{S_1}\min\{n,v^{-\frac{p'}{p}}(\z)\}\dm(\z)\right)^{p-1}\int_{S_2}v(\z)\dm(\z)}{\om(S_1)^p}\le \frac{C_1}{C^p}.
    $$
By changing the roles of $S_1$ and $S_2$ we deduce
    $$
    \frac{\left(\int_{S_2}\min\{n,v^{-\frac{p'}{p}}(\z)\}\dm(\z)\right)^{p-1}\int_{S_1}v(\z)\dm(\z)}{\om(S_2)^p}\le \frac{C_1}{C^p},
    $$
and it follows that $v\in L^1_{\om}$. By letting $n\to\infty$ and using Fatou's lemma we deduce
    $$
    \frac{\left(\int_{S_1}v^{-\frac{p'}{p}}\dm\right)^{p-1}\int_{S_2}v\dm}{\om(S_1)^p}
    \frac{\left(\int_{S_2}v^{-\frac{p'}{p}}\dm\right)^{p-1}\int_{S_1}v\dm}{\om(S_2)^p}\le\frac{C_1^2}{C^{2p}}.
    $$
Since
    $$
    \frac{(v\om)(S)}{\om(S)^p}
    \left(\left(v^{-\frac{p'}{p}}\om\right)(S)\right)^{\frac{p}{p'}}
    =\frac{(v\om)(S)}{\om(S)}
    \left(\frac{\left(v^{-\frac{p'}{p}}\om\right)(S)}{\om(S)}\right)^{\frac{p}{p'}}\ge1
    $$
for any Carleson square $S$ by H\"older's inequality, it follows that $v\in B_{p,\omega}$.
\end{proof}

With these preparations we are ready to prove the first of our main results.

\medskip

\begin{Prf}{\em{Theorem~\ref{Theorem:BB-generalized}}.}
Clearly, (i)$\Rightarrow$(ii)$\Rightarrow$(iii), and (iii)$\Rightarrow$(iv) follows by Proposition~\ref{pr:weakpp>1}. To see the remaining implication, note that
    $$
    B^\om_z(\z)=\frac1{(1-\overline{z}\z)^{\gamma}}\int_0^1\frac{d\nu(r)}{1-r\overline{z}\z}=\frac{\Psi(1-\overline{z}\z)}{1-\overline{z}\z}
    $$
for the analytic function
    $$
    \Psi(z)=z^{1-\gamma}\int_0^1\frac{d\nu(r)}{1-r(1-z)},\quad z\in D(1,1).
    $$
The restriction of $\Psi$ to $(0,2)$ is decreasing because $\gamma\ge1$, and obviously $|\Psi(1-z)|=|\Psi(1-\overline{z})|$ for all $z\in\D$. Moreover, $\mu=\omega\otimes m$ satisfies $\mu(S(I))\lesssim\mu(T(I))$ because $\om\in\R$, and Lemma~\ref{le:shi1} yields
    $$
    \Psi(|I|)=\frac1{|I|^{\gamma-1}}\int_0^1\frac{d\nu(r)}{1-r(1-|I|)}\asymp\frac1{\widehat{\om}(1-|I|)},
    $$
so $\Psi(|I|)\mu(S(I))\asymp|I|$ for all intervals $I$. Now that $\Psi(t)\lesssim\Psi(2t)$ for all $t\in(0,1)$, the hypothesis of Corollary~\ref{co:sufp>1} are satisfied, and hence (iv)$\Rightarrow$(i) as well as the estimate for the operator norm of $P^+_\om$ follow.
\end{Prf}

\section{Weak type $(1,1)$ inequality}\label{Sec4}

\begin{lemma}\label{Lemma:nu}
Let $\nu$ be a positive Borel measure supported on $[0,1]$. Then
    \begin{equation*}
    \left|\int_0^1 \frac{d\nu(r)}{1-rz}\right|\asymp \int_0^1 \frac{d\nu(r)}{\left|1-rz\right|}\asymp \int_0^1 \frac{d\nu(r)}{1-r(1-|1-z|)},\quad z\in\D.
    \end{equation*}
\end{lemma}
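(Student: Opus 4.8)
The plan is to prove the two equivalences one at a time. Write $A=\left|\int_0^1\frac{d\nu(r)}{1-rz}\right|$, $B=\int_0^1\frac{d\nu(r)}{|1-rz|}$ and $C=\int_0^1\frac{d\nu(r)}{1-r(1-|1-z|)}$, and observe at the outset the algebraic identity $1-r(1-|1-z|)=(1-r)+r|1-z|$, so that the denominator defining $C$ is just $(1-r)+r|1-z|$.

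First I would dispose of the equivalence $A\asymp B$. The inequality $A\le B$ is immediate from $\left|\int\right|\le\int|\cdot|$. The reverse inequality $B\le\sqrt2\,A$ requires no new work: it is precisely the chain of estimates already carried out in \eqref{4} during the proof of Lemma~\ref{Lemma:pointwise}, where splitting $\int_0^1\frac{d\nu(r)}{1-rz}$ into real and imaginary parts and using $\sqrt{X^2+Y^2}\ge\frac1{\sqrt2}(|X|+|Y|)$ together with $\Real(1-rz)+|\Imag(1-rz)|\ge|1-rz|$ gives exactly $\left|\int_0^1\frac{d\nu(r)}{1-rz}\right|\ge\frac1{\sqrt2}\int_0^1\frac{d\nu(r)}{|1-rz|}$. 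Thus I would simply invoke \eqref{4}.

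The remaining equivalence $B\asymp C$ I would reduce to the pointwise comparison of denominators
    $$
    |1-rz|\asymp(1-r)+r|1-z|,\qquad r\in[0,1],\ z\in\D,
    $$
with absolute constants, and then integrate against $d\nu(r)$ (the integrands are monotone in the denominators, so the pointwise two-sided bound transfers directly to $B$ and $C$). The upper bound $|1-rz|\le(1-r)+r|1-z|$ is the triangle inequality applied to the decomposition $1-rz=(1-r)+r(1-z)$. For the lower bound I would set $a=1-r\ge0$, $b=r|1-z|\ge0$ and let $\phi=\arg(1-z)$, so that $1-rz=a+r(1-z)$ is the sum of a vector along the positive real axis of length $a$ and a vector of length $b$ making angle $\phi$ with it, giving $|1-rz|^2=a^2+b^2+2ab\cos\phi$.

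The one point that needs care — and the only genuinely substantive step — is the observation that $\cos\phi\ge0$. This holds because $z\in\D$ forces $\Real z\le|z|<1$, hence $\Real(1-z)=1-\Real z>0$, so $\cos\phi=\Real(1-z)/|1-z|\ge0$ and in particular the cross term $2ab\cos\phi$ is nonnegative. Consequently $|1-rz|^2\ge a^2+b^2\ge\tfrac12(a+b)^2$, which yields $|1-rz|\ge\tfrac1{\sqrt2}\bigl((1-r)+r|1-z|\bigr)$ and completes the pointwise comparison. Integrating then gives $C\le B\le\sqrt2\,C$, and combining with $A\asymp B$ finishes the proof.
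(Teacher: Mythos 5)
Your proof is correct, and its overall skeleton matches the paper's: both arguments reduce the lemma to the pointwise comparison of denominators $|1-rz|\asymp (1-r)+r|1-z|$ together with the estimate \eqref{4} from the proof of Lemma~\ref{Lemma:pointwise}, which you correctly identify and invoke for the equivalence of the first two quantities. The genuine difference lies in how you prove the lower pointwise bound, and your route is cleaner. The paper splits into the cases $r\ge 1/2$ and $0\le r\le 1/2$: for $r\ge1/2$ it writes $z=|z|e^{i\theta}$, expands $|1-rz|^2=\left((1-r)+r(1-|z|)\right)^2+4r|z|\sin^2(\theta/2)$, and after two case-dependent estimates arrives at $|1-rz|\ge\frac{1}{2\sqrt2}\left((1-r)+|1-z|\right)$, while for $r\le1/2$ it uses the trivial bound $|1-rz|\ge1/2$. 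You instead apply the law of cosines to the decomposition $1-rz=(1-r)+r(1-z)$ and observe that the cross term is nonnegative because $\Real(1-z)>0$ for every $z\in\D$; this single observation handles all $r\in[0,1]$ at once, compares $|1-rz|$ directly against the exact quantity $(1-r)+r|1-z|$ appearing in the statement, and yields the uniform explicit constant $1/\sqrt2$ (better than the paper's $1/(2\sqrt2)$ and $1/6$). Both proofs cost the same amount of work; yours simply isolates the positivity of $\Real(1-z)$ as the one substantive fact, where the paper buries it inside a case analysis in polar coordinates.
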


\begin{proof}
We first show that
    \begin{equation}\label{equiv1}
    1-r(1-|1-z|)\asymp|1-rz|, \quad z\in\D.
    \end{equation}
On one hand, $|1-rz|=|1-r+r(1-z)|\le (1-r)+r|1-z|$ for all $0<r<1$. On the other
hand, if $z=|z|e^{i\theta}$ and $r\ge 1/2$, then
    \begin{equation*}
    \begin{split}
    |1-rz|^2 &=\left( (1-r)+r(1-|z|) \right)^2 +4r|z|\sin^2\left(\frac{\theta}{2}\right)\\
    &\ge\frac{1}{4}\left((1-r)^2+(1-|z|)^2\right)+4r|z|\sin^2\left(\frac{\theta}{2}\right)\\
    &\ge\frac{1}{4}\left((1-r)^2+(1-|z|)^2+4|z|\sin^2\left(\frac{\theta}{2}\right)\right)\\
    &=\frac{1}{4}\left((1-r)^2+ |1-z|^2\right),
    \end{split}
    \end{equation*}
and hence
    $$
    |1-rz|\ge\frac{1}{2\sqrt{2}}\left((1-r)+|1-z|\right).
    $$
Moreover, for $0\le r\le 1/2$ we have
    $$
    |1-rz|\ge\frac{1}{2}\ge\frac{\left((1-r)+ |1-z|\right)}{6},
    $$
and hence \eqref{equiv1} follows. Therefore
    \begin{equation*}
    \left|\int_0^1 \frac{d\nu(r)}{1-rz}\right|\le \int_0^1 \frac{d\nu(r)}{|1-rz|}
    \asymp \int_0^1 \frac{d\nu(r)}{1-r(1-|1-z|)},\quad z\in\D.
    \end{equation*}
By combining this with \eqref{4} we deduce the assertion.
\end{proof}

\begin{lemma}\label{Lemma:local-integrated}
Let $\om\in\DD$ such that $B^\om_z$ admits the representation
    $$
    B^\om_z(\z)=\frac{1}{(1-\overline{z}\z)^\gamma}\int_0^1\frac{d\nu(r)}{1-r\overline{z}\z},\quad z,\z\in\D,
    $$
for some $\gamma\ge1$ and a positive measure $\nu$ supported on
$[0,1]$. Then for $v\in L^1_\om$ non-negative, $z_0\in \D\setminus
D(0,1/2)$ and \,$z\in\D$ satisfying $|z-z_0|\le c(1-|z_0|)$ for a
constant $c>0$, there exists $C=C(c,\gamma,\om)>0$ such that
    $$
    \int_{\D\setminus D(z_0,2|z-z_0|)}|B^\om_{z_0}(\z)-B^\om_{z}(\z)|v(\z)\dm(\z)\le
    C \inf_{a\in D(z_0,\sqrt{2}(1-|z_0|))\cap\D}M_\om(v)(a).
    $$
\end{lemma}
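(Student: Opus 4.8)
The plan is to reduce the integrand to one explicit expression, decompose the region of integration into dyadic annuli centred at $z_0$, compare each annular integral of $v$ against the maximal function, and sum a geometric series. Throughout I write $d=|z-z_0|$ and $h=1-|z_0|$, so that $h\asymp1-|z_0|^2$ because $|z_0|\ge1/2$, and $d\le ch$ by hypothesis.

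First I would record the pointwise size of the kernel. Applying Lemma~\ref{le:shi1} at the point $x=1-|1-\overline{z}\z|$ and combining it with Lemma~\ref{Lemma:nu} gives the workhorse estimate
\[
|B^\om_z(\z)|=\frac{1}{|1-\overline{z}\z|^\gamma}\left|\int_0^1\frac{d\nu(r)}{1-r\overline{z}\z}\right|\asymp\frac{1}{|1-\overline{z}\z|^\gamma}\cdot\frac{|1-\overline{z}\z|^{\gamma-1}}{\widehat{\om}(1-|1-\overline{z}\z|)}=\frac{1}{|1-\overline{z}\z|\,\widehat{\om}(1-|1-\overline{z}\z|)},
\]
valid for $|1-\overline{z}\z|\le1$ (the range $|1-\overline{z}\z|>1$ is handled by the trivial bound $\int_0^1\frac{d\nu}{|1-r\overline z\z|}\asymp\nu([0,1])$). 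To treat the difference of kernels I would invoke Lemma~\ref{Lemma:pointwise}; since its constant degenerates as $c\to1^+$, I cannot apply it on all of $\D\setminus D(z_0,2d)$. Using the identity $|1-\overline{\z}z|^2=|z-\z|^2+(1-|z|^2)(1-|\z|^2)$, I have $|1-\overline{\z}z|\ge|z-\z|\ge|\z-z_0|-d$, so on $\D\setminus D(z_0,4d)$ it follows that $|1-\overline{\z}z|\ge3d>d$ and Lemma~\ref{Lemma:pointwise} applies with $c=3$, giving
\[
|B^\om_{z_0}(\z)-B^\om_z(\z)|\lesssim\frac{d}{|1-\overline{z}\z|}\,|B^\om_z(\z)|\asymp\frac{d}{|1-\overline{z}\z|^2\,\widehat{\om}(1-|1-\overline{z}\z|)}.
\]
On the thin shell $D(z_0,4d)\setminus D(z_0,2d)$ I would instead use $|B^\om_{z_0}(\z)-B^\om_z(\z)|\le|B^\om_{z_0}(\z)|+|B^\om_z(\z)|$ together with the kernel size above; there $|1-\overline{z_0}\z|\asymp|1-\overline{z}\z|\asymp\max\{h,d\}$, so this shell contributes exactly like one annulus.

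Next I would split $\D\setminus D(z_0,4d)$ into the dyadic annuli $A_k=\{\z\in\D:2^{k+2}d\le|\z-z_0|<2^{k+3}d\}$, $k\ge0$, of which only finitely many are nonempty. On $A_k$ one has $|\z-z_0|\asymp2^kd$ and, since $|z-z_0|=d$, also $|1-\overline{z}\z|\asymp|1-\overline{z_0}\z|\asymp\max\{h,2^kd\}$; the doubling of $\widehat{\om}$ (that is, $\om\in\DD$) then lets me replace $\widehat{\om}(1-|1-\overline{z}\z|)$ by $\widehat{\om}(1-\max\{h,2^kd\})$ up to constants. To bring in the maximal function I would use that whenever a disc centred at $z_0$ has radius $R\ge\sqrt2\,h=\sqrt2(1-|z_0|)$, every competitor $a\in D(z_0,\sqrt2\,h)\cap\D$ lies in $D(z_0,R)$, whence $\int_{D(z_0,R)\cap\D}v\dm\le\om(D(z_0,R)\cap\D)\,M_\om(v)(a)$ and therefore $\le\om(D(z_0,R)\cap\D)\inf_aM_\om(v)(a)$; together with the standard comparison for doubling measures $\om(D(z_0,R)\cap\D)\asymp R\,\widehat{\om}(1-R)$ for $h\lesssim R\lesssim1$, this bounds $\int_{A_k}v\dm$ by $\om$ of the appropriate disc times the infimum.

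Finally I would sum the contributions. The near annuli, those contained in $D(z_0,\sqrt2\,h)$, all share the factor $\asymp\frac{d}{h^2\,\widehat{\om}(1-h)}$, so their total is $\lesssim\frac{d}{h^2\,\widehat{\om}(1-h)}\,\om(D(z_0,\sqrt2\,h)\cap\D)\inf_aM_\om(v)(a)\asymp\frac{d}{h}\inf_aM_\om(v)(a)\le c\inf_aM_\om(v)(a)$. For the far annuli ($2^kd>h$) the integrand is $\asymp\frac{1}{2^{2k}d\,\widehat{\om}(1-2^kd)}$, and the measure bound $\om(D(z_0,2^{k+3}d)\cap\D)\asymp2^kd\,\widehat{\om}(1-2^kd)$ (using $\widehat{\om}(1-2^{k+3}d)\asymp\widehat{\om}(1-2^kd)$ by doubling) yields a summable factor $\asymp2^{-k}$, so the far part is also $\lesssim\inf_aM_\om(v)(a)$; the shell adds a comparable term. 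Adding the three pieces gives the assertion with $C=C(c,\gamma,\om)$. I expect the main obstacle to be the bookkeeping of the geometry across the two regimes $2^kd\lesssim h$ and $2^kd\gtrsim h$ — keeping $|1-\overline{z}\z|$, $|1-\overline{z_0}\z|$, $\max\{h,2^kd\}$ and the arguments of $\widehat{\om}$ simultaneously comparable, and applying the doubling property uniformly up to the largest nonempty annulus (where $1-2^kd$ may be small and $R$ of order one). The clean kernel size derived from Lemmas~\ref{Lemma:nu} and \ref{le:shi1} is the structural input that makes the whole estimate transparent.
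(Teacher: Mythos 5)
Your proposal is correct and takes essentially the same route as the paper: Lemma~\ref{Lemma:pointwise} for the kernel difference, Lemmas~\ref{Lemma:nu} and~\ref{le:shi1} for the explicit kernel size, a dyadic decomposition around $z_0$, comparison with $M_\om(v)$ via discs containing $D(z_0,\sqrt2(1-|z_0|))\cap\D$, and a geometric series. The only (harmless) deviations are bookkeeping: since $|B^\om_{z_0}(\z)-B^\om_{z}(\z)|$ is symmetric in $z$ and $z_0$, the paper applies Lemma~\ref{Lemma:pointwise} centred at $z_0$, where $|1-\overline{\z}z_0|\ge|\z-z_0|\ge 2|z-z_0|$ holds on all of $\D\setminus D(z_0,2|z-z_0|)$ (so your extra shell and the near/far case split are unnecessary), and it decomposes by level sets of $|1-\overline{\z}z_0|$ starting at scale $1-|z_0|$ rather than by Euclidean annuli starting at scale $|z-z_0|$.
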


\begin{proof}
If $\z\in\D\setminus D(z_0,2|z-z_0|)$, then
$2|z-z_0|\le|z_0-\z|<|1-\overline{\z}z_0|$, and hence
    $$
    \int_{\D\setminus D(z_0,2|z-z_0|)}|B^\om_{z_0}(\z)-B^\om_{z}(\z)|v(\z)\dm(\z)
    \lesssim|z-z_0|\int_\D\frac{|B^\om_{z_0}(\z)|}{|1-\overline{\z}z_0|}v(\z)\dm(\z)
    $$
by Lemma~\ref{Lemma:pointwise}. Let $k_0\in\N$ such that $2^{k_0}\sqrt{2}(1-|z_0|)\le1<2^{k_0+1}\sqrt{2}(1-|z_0|)$. Let $E_{-1}=\emptyset$, $E_k=\{z\in\D:|1-\overline{z}_0z|\le
2^k\sqrt{2}(1-|z_0|)\}$ for $k=0,\ldots,k_0$, and $E_{k_0+1}=\D\setminus E_{k_0}$. Then, by Lemma~\ref{Lemma:nu},
    \begin{equation*}
    \begin{split}
    &(1-|z_0|)\int_\D\frac{|B^\om_{z_0}(\z)|}{|1-\overline{\z}z_0|}v(\z)\dm(\z)\\
    &\lesssim (1-|z_0|)\sum_{k=0}^{k_0+1} \int_{E_k\setminus E_{k-1}}\frac{1}{|1-\overline{\z}z_0|^{\gamma +1}}
    \left(\int_0^1\frac{d\nu(r)}{1-r(1-|1-\overline{\z}z_0|)}\right)
    v(\z)\dm(\z).
    \end{split}
    \end{equation*}
Further, Lemma~\ref{le:shi1} and the hypothesis $\om\in\DD$ give
    \begin{equation*}
    \begin{split}
    &(1-|z_0|)\sum_{k=0}^{k_0} \int_{E_k\setminus E_{k-1}}\frac{1}{|1-\overline{\z}z_0|^{\gamma +1}}
    \left(\int_0^1\frac{d\nu(r)}{1-r(1-|1-\overline{\z}z_0|)}\right)
    v(\z)\dm(\z)\\
    &\asymp(1-|z_0|)\sum_{k=0}^{k_0} \int_{E_k\setminus E_{k-1}}\frac{1}{|1-\overline{\z}z_0|^{2}\widehat{\om}(1-|1-\overline{\z}z_0|)}
    v(\z)\dm(\z)\\
    &\lesssim\sum_{k=0}^{k_0}\frac{1}{2^{2k}(1-|z_0|)\widehat{\om}(1-2^{k-1}\sqrt{2}(1-|z_0|))}(\om v)(E_k\setminus E_{k-1})\\
    &\le\sum_{k=0}^{k_0}\frac{1}{2^{2k}(1-|z_0|)\widehat{\om}(1-2^{k-1}\sqrt{2}(1-|z_0|))}(\om v)\left(D(z_0,2^k\sqrt{2}(1-|z_0|))\cap\D\right)\\
    &\lesssim\sum_{k=0}^{k_0}\frac{1}{2^{k}\om \left(D(z_0,2^k\sqrt{2}(1-|z_0|))\cap\D\right)}(\om v)\left(D(z_0,2^k\sqrt{2}(1-|z_0|))\cap\D\right)\\
    &\le\inf_{a\in D(z_0,\sqrt{2}(1-|z_0|))\cap\D}M_\om(v)(a) \sum_{k=0}^\infty \frac{1}{2^{k}}
    \asymp\inf_{a\in D(z_0,\sqrt{2}(1-|z_0|))\cap\D}M_\om(v)(a).
    \end{split}
   \end{equation*}
Furthermore, clearly
    \begin{equation*}
    \begin{split}
    &(1-|z_0|)\int_{E_{k_0+1}}\frac{1}{|1-\overline{\z}z_0|^{\gamma +1}}
    \left(\int_0^1\frac{d\nu(r)}{1-r(1-|1-\overline{\z}z_0|)}\right)v(\z)\dm(\z)\\
    &\lesssim\inf_{a\in
    D(z_0,\sqrt{2}(1-|z_0|))\cap\D}M_\om(v)(a),
    \end{split}
   \end{equation*}
which together with the previous estimate finishes the proof.
\end{proof}

Write $\D=\overline{D(0,\frac12)}\cup R_1\cup R_2$, where $R_1$ and $R_2$ are dyadic Carleson squares.

\begin{lemma}\label{Lemma:C-Z}
Let $\om\in\R$ such that $\om([a,b])\asymp\om\left(\left[a,\frac{a+b}2\right]\right)\asymp\om\left(\left[\frac{a+b}2,b\right]\right)$ for all $0\le a,b\le1$, $f\in L^1_\om$ and $\lambda>\|f\|_{L^1_\om}$. Let $R\in\{R_1,R_2\}$. Then there exist~$F$ and~$\Omega$ such that $R=F\cup\Omega$, $F\cap\Omega=\emptyset$ and
\begin{itemize}
\item[\rm(i)] $|f(z)|\le\lambda$ almost everywhere on $F$;
\item[\rm(ii)] $\Omega=\cup_k Q_k$, where $Q_k\subset R$ are dyadic polar rectangles;
\item[\rm(iii)] $\displaystyle\om(\Omega)\le\frac{\|f1_R\|_{L^1_\om}}{\lambda}$;
\item[\rm(iv)] There is a constant $C=C(\om)>0$ such that
$\displaystyle\lambda\le\frac{1}{\omega(Q_k)}\int_{Q_k}|f(z)|\dm(z)\le
C\lambda$.
\end{itemize}
The Calder\'on-Zygmund decomposition of $f1_R:R\to\C$ is $f1_R=g+b$, where
    \begin{equation*}
    g(z)=\left\{\begin{array}{ll}
    f(z),&\quad z\in F\\
    \frac{1}{\om(Q_k)}\int_{Q_k}f(\z)\dm(\z),&\quad z\in Q_k
    \end{array}\right..
    \end{equation*}
\end{lemma}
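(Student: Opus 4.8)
The plan is to run a stopping-time Calder\'on--Zygmund selection on a dyadic filtration of $R$ adapted to the product measure $\m$, the only nonstandard input being the doubling of $\m$ supplied by the bisection hypothesis on $\om$. First I would fix the tree structure. Write $R=S(I)$ with radial range $[1-|I|,1)$ and arc $I$, and declare the \emph{dyadic polar rectangles} to be the sets obtained by repeatedly splitting a rectangle $\{a\le r<b,\ e^{i\theta}\in J\}$ into its four children: bisect the arc $J$ into two equal halves and bisect the radial interval $[a,b)$ at its midpoint $\frac{a+b}2$. This produces a quaternary tree $\mathcal{T}(R)$ whose generation-$n$ members partition $R$, whose diameters tend to $0$, and which generate the Borel $\sigma$-algebra on $R$ up to $\m$-null sets.

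The key structural fact is that $\m$ is doubling along $\mathcal{T}(R)$ with constant depending only on $\om$. Indeed, the measure $\om(Q)$ of a polar rectangle $Q=\{a\le r<b,\ e^{i\theta}\in J\}$ is comparable to $\om([a,b))\abs{J}$ (the one-dimensional $\om$-mass of the radial interval times the arc-length of $J$, the factor $r\in[1/2,1)$ being harmless on $R$). Bisecting the arc halves $\abs{J}$ exactly, while the bisection hypothesis $\om([a,b])\asymp\om([a,\frac{a+b}2])\asymp\om([\frac{a+b}2,b])$ shows the $\om$-mass of each radial half is comparable to that of the parent; this applies at every generation since the hypothesis holds for all $0\le a,b\le1$. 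Hence each child $Q$ satisfies $\om(Q)\asymp\om(\widehat{Q})$ for its parent $\widehat{Q}$, and in particular there is $C=C(\om)>0$ with $\om(\widehat{Q})\le C\,\om(Q)$ throughout $\mathcal{T}(R)$.

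Next I would perform the selection. Here the hypothesis $\lambda>\|f\|_{L^1_\om}$ enters: since $\int_R|f|\dm\le\|f1_R\|_{L^1_\om}\le\|f\|_{L^1_\om}<\lambda$, it guarantees that the mean of $|f|$ over the top rectangle $R$ stays below $\lambda$, so that $R$ is never selected and every selected rectangle has a parent inside $R$. Let $\{Q_k\}$ be the maximal members of $\mathcal{T}(R)$ with $\frac1{\om(Q_k)}\int_{Q_k}|f|\dm>\lambda$, set $\Omega=\bigcup_k Q_k$ and $F=R\setminus\Omega$; this gives (ii), and the $Q_k$ are pairwise disjoint by maximality. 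The lower bound in (iv) is the selection rule, and the upper bound follows because the parent $\widehat{Q_k}$ is unselected: $\frac1{\om(Q_k)}\int_{Q_k}|f|\dm\le\frac{\om(\widehat{Q_k})}{\om(Q_k)}\cdot\frac1{\om(\widehat{Q_k})}\int_{\widehat{Q_k}}|f|\dm\le C\lambda$, with $C$ the doubling constant above. Summing the lower bounds over the disjoint $Q_k$ yields $\om(\Omega)=\sum_k\om(Q_k)\le\frac1\lambda\sum_k\int_{Q_k}|f|\dm=\frac1\lambda\int_\Omega|f|\dm\le\frac{\|f1_R\|_{L^1_\om}}{\lambda}$, which is (iii).

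Finally, for (i) I would invoke the martingale convergence theorem for the filtration $\{\mathcal{F}_n\}$ generated by the generation-$n$ rectangles, on $R$ equipped with the normalized restriction of $\m$. Applied to $|f|1_R\in L^1(\m)$, it gives $\mathbb{E}[|f|1_R\mid\mathcal{F}_n]\to|f|1_R$ $\m$-a.e., and the left side at a point is exactly the $\m$-average of $|f|$ over the generation-$n$ rectangle containing that point. If $z\in F$, then no ancestor of any rectangle containing $z$ was selected (else $z\in\Omega$), so every such average is at most $\lambda$; passing to the limit gives $|f(z)|\le\lambda$ for a.e.\ $z\in F$, which is (i), and the function $g$ of the statement is then well defined with $|g|\le C\lambda$ a.e. I expect the only genuine difficulty to be the structural input of the first two paragraphs---verifying that $\m$ doubles uniformly along the polar-rectangle tree and that this tree differentiates $L^1_\om$; once the doubling constant and martingale convergence are in hand, the selection argument is the classical one.
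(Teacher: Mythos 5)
Your overall strategy---the quaternary stopping-time selection on dyadic polar rectangles, doubling of $\omega\otimes m$ along the tree via the bisection hypothesis on $\om$, maximality plus the unselected parent for (iv), disjointness for (iii), and differentiation along the shrinking rectangles for (i)---is exactly the paper's proof; your martingale convergence argument for (i) is the paper's appeal to Lebesgue differentiation in dyadic form, and is fine.

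There is, however, one incorrect step: the claim that $\lambda>\|f\|_{L^1_\om}$ forces the top rectangle $R$ to be unselected. The selection criterion compares the \emph{average} $\frac{1}{\om(R)}\int_R|f(\z)|\dm(\z)$ with $\lambda$, while the hypothesis only gives $\int_R|f(\z)|\dm(\z)<\lambda$; since nothing normalizes $\om(R)=(\omega\otimes m)(R)\ge1$, the average can still exceed $\lambda$ (take $\om$ of small total mass). When $R$ itself is selected it has no parent inside the tree, so your derivation of the upper bound in (iv) breaks down for $Q_k=R$. The repair is exactly what the paper does: treat $Q_k=R$ as a separate case, noting that then
\begin{equation*}
\frac{1}{\om(R)}\int_R|f(\z)|\dm(\z)\le\frac{\|f1_R\|_{L^1_\om}}{\om(R)}<\frac{\lambda}{\om(R)},
\end{equation*}
and $\om(R)=c\,\om(\D)$ is a fixed positive constant depending only on $\om$, so the upper bound in (iv) holds with $C=\max\left\{C_{\mathrm{doubling}},\frac{1}{c\,\om(\D)}\right\}$. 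Parts (i)--(iii) are unaffected: if $R$ is selected then $F=\emptyset$ and (i) is vacuous, while your arguments for (ii) and (iii) never use the existence of a parent. With this one patch your proof coincides with the paper's.
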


\begin{proof}
Write $R=Q_{1,0}$ and pick $Q_{1,0}$ if
    $$
    \frac{1}{\om(Q_{1,0})}\int_{Q_{1,0}}|f(\z)|\dm(\z)\ge\lambda.
    $$
If not, divide $Q_{1,0}$ into $Q_{k,0}$, $j=1,\ldots,4$, and pick those for which
    $$
    \frac{1}{\om(Q_{k,1})}\int_{Q_{k,1}}|f(\z)|\dm(\z)\ge\lambda.
    $$
Divide the non-selected ones and proceed. By re-naming the selected sets as $Q_k$ and defining $\Omega=\cup_k Q_k$ we have (ii).

(i) Let $F=R\setminus\Omega$. For almost every $z\in F$ and each $k\in\N\cup\{0\}$ there exists a unique dyadic polar rectangle $Q_j$ of generation $j$ such that $z\in Q_j$ and
    $$
    \frac{1}{\om(Q_{j})}\int_{Q_{j}}|f(\z)|\dm(\z)\le\lambda.
    $$
Then $\cap_j\overline{Q_j}=\{z\}$, and hence
    $$
    |f(z)|=\lim_{j\to\infty}\frac{1}{\om(Q_{j})}\int_{Q_{j}}|f(\z)|\dm(\z)
    $$
for almost every $z\in F$ by Lebesgue's differentiation theorem. It follows that $|f|\le\lambda$ almost everywhere on $F$.

(iii) Since $\om(Q_k)\le\frac1\lambda\int_{Q_{k}}|f(\z)|\dm(\z)$ for each $k$, we have
    $$
    \omega(\Omega)\le\sum_k\omega(Q_k)\le\frac1{\lambda}\sum_k\int_{Q_k}|f(\z)|\dm(\z)
    =\frac1{\lambda}\int_{\Omega}|f(\z)|\dm(\z)\le\frac{\|f\|_{L^1_\om}}{\lambda}.
    $$

(iv) Since $\omega(R)=c\omega(\D)$ for some constant $c>0$, we have
    $$
    \frac{1}{\om(R)}\int_{R}|f(\z)|\dm(\z)=\frac{\|f1_R\|}{c\om(\D)}<\frac{\lambda}{c\om(\D)}.
    $$
For each $Q_k\ne R$, there exists a non-selected dyadic polar
rectangle $Q'$ from the preceding generation such that $Q_k\subset
Q'$. Since $\om\in\R$ such that $\om([a,b])\asymp\om\left(\left[a,\frac{a+b}2\right]\right)\asymp\om\left(\left[\frac{a+b}2,b\right]\right)$ for all $0\le a,b\le1$ by the hypothesis, we deduce
    $$
    \lambda>\frac1{\om(Q')}\int_{Q'}|f(\z)|\dm(\z)
    \ge\frac1{C\om(Q_k)}\int_{Q_k}|f(\z)|\dm(\z)
    $$
for some constant $C=C(\om)>0$, and thus (iv) holds.
\end{proof}

\medskip

\begin{Prf}{\em{Theorem~\ref{th:w11}}.}
Assume first that  $v$ satisfies the $B_{1,\om}$-condition. Write
$\D=\overline{D(0,\frac12)}\cup R_1\cup R_2$ as before. Then
$f=f1_{\overline{D(0,\frac12)}}+f1_{R_1}+f1_{R_2}$. Since $B^\om_z$
is uniformly bounded on $D(0,\frac12)$ and
    $$
    \essinf_{z\in D(0,\frac12)}v(z)\ge\frac{1}{B_{1,\om}(v)}\frac{\int_{D(0,\frac12)}v(\z)\dm(\z)}{\om(D(0,\frac12))},
    $$
we have
    \begin{equation*}
    \begin{split}
    (v\om)(\{z:|P^{+}_\om(f1_{\overline{D(0,\frac12)}})(z)|>\lambda\})
    &\le\int_\D\frac{|P^{+}_\om(f1_{\overline{D(0,\frac12)}})(z)|}{\lambda}v(z)\dm(z)\\
    &\le\frac1\lambda\int_\D\left(\int_{D(0,\frac12)}|f(\z)||B^\om_z(\z)|\dm(\z)\right)v(z)\dm(z)\\
    &\lesssim\frac1\lambda\int_{D(0,\frac12)}|f(\z)|\dm(\z)\lesssim\frac{B_{1,\om}(v)\|f\|_{L^1_{\om}(v)}}{\lambda}.
    \end{split}
    \end{equation*}
Moreover,
    \begin{equation*}
    \begin{split}
    (v\om)(\{z:|P^{+}_\om(f)(z)|>\lambda\})
    &\le(v\om)(\{z:|P^{+}_\om(f1_{\overline{D(0,\frac12)}})(z)|+|P^{+}_\om(f1_{R_1})(z)|+|P^{+}_\om(f1_{R_2})(z)|>\lambda\})\\
    &\le(v\om)\bigg(\left\{z:|P^{+}_\om(f1_{\overline{D(0,\frac12)}})(z)|>\frac\lambda3\right\}
    \cup\left\{z:|P^{+}_\om(f1_{R_1})(z)|>\frac\lambda3\right\}\\
    &\qquad\cup\left\{z:|P^{+}_\om(f1_{R_1})(z)|>\frac\lambda3\right\}\bigg)\\
    &\le(v\om)\left(\left\{z:|P^{+}_\om(f1_{\overline{D(0,\frac12)}})(z)|>\frac\lambda3\right\}\right)\\
    &\quad+(v\om)\left(\left\{z:|P^{+}_\om(f1_{R_1})(z)|>\frac\lambda3\right\}\right)\\
    &\quad+(v\om)\left(\left\{z:|P^{+}_\om(f1_{R_2})(z)|>\frac\lambda3\right\}\right),
    \end{split}
    \end{equation*}
so it suffices to show that
    $$
    (v\om)\left(\left\{z:|P^{+}_\om(f1_{R})(z)|>\lambda\right\}\right)\lesssim\frac{\|f1_R\|_{L^1_{\om}(v)}}{\lambda},\quad R\in\{R_1,R_2\},
    $$
for large values of $\lambda$. To see this, fix $R\in\{R_1,R_2\}$,
and decompose $|f1_R|=g+b$ according to Lemma~\ref{Lemma:C-Z} and
the weight $\om\in\R$. Then the definition of $g$ and Lemma~\ref{Lemma:C-Z}(iv) give
    \begin{equation}\label{*}
    |g(z)|\le\sum_k\left(\frac{1_{Q_k}(z)}{\om(Q_k)}\int_{Q_k}|f(\z)|\dm(\z)\right)\lesssim\lambda,\quad z\in\Omega=\cup_kQ_k,
    \end{equation}
which together with Lemma~\ref{Lemma:C-Z}(i) and the definition of $g$ yields
    \begin{equation*}
    \begin{split}
    \|g\|_{L^2_{v\om}}^2&=\int_F|g(\z)|^2v(\z)\dm(\z)+\int_\Omega|g(\z)|^2v(\z)\dm(\z)\\
    &\lesssim\lambda\int_F|f(\z)|v(\z)\dm(\z)+\lambda\int_\Omega|g(\z)|v(\z)\dm(\z).
    \end{split}
    \end{equation*}
Now, since $v\in B_{1,\om}$ and $\om\in\R$ such that $\om([a,b])\asymp\om\left(\left[a,\frac{a+b}2\right]\right)\asymp\om\left(\left[\frac{a+b}2,b\right]\right)$ for all $0\le a,b\le1$ by the hypotheses,
    \begin{equation}
    \begin{split}\label{eq:j1}
    \int_\Omega|g(\z)|v(\z)\dm(\z)
    &=\sum_k\int_{Q_k}|f(\z)|\frac{(v\om)(Q_k)}{\om(Q_k)}\dm(\z)\\
    &\lesssim\sum_k\int_{Q_k}|f(\z)|M_\om(v)(\z)\dm(\z)\\
    &\le B_{1,\om}(v)\sum_k\int_{Q_k}|f(\z)|v(\z)\dm(\z)\\
    &=B_{1,\om}(v) \int_{\Omega}|f(\z)|v(\z)\dm(\z).
    \end{split}
    \end{equation}
Therefore $\|g\|_{L^2_{\om}(v)}^2\lesssim\lambda
B_{1,\om}(v)\|f1_R\|_{L^1_{\om}(v)}$, and thus $g\in L^2_{\om}(v)$.
Since $B_{1,\om}\subset B_{2,\om}$ with $B_{2,\om}(v)\lesssim
B_{1,\om}(v)$, $P^{+}_\om:L^2_{\om}(v)\to L^2_{\om}(v)$ is bounded
by Theorem~\ref{Theorem:BB-generalized}. Consequently,
 \begin{equation*}
    \begin{split}
    (v\om)\left(\left\{z:|P^{+}_\om(g)(z)|>\lambda\right\}\right)
    &=(v\om)\left(\left\{z:|P^{+}_\om(g)(z)|^2>\lambda^2\right\}\right)\\
    &\le\frac1{\lambda^2}\int_\D|P^{+}_\om(g)(\z)|^2v(\z)\dm(\z)
    \lesssim B_{2,\om}(v)\frac{\|g\|_{L^2_{\om}(v)}^2}{\lambda^2}\\
    &\lesssim B^2_{1,\om}(v)\frac{\|f1_R\|_{L^1_{\om}(v)}}{\lambda}.
    \end{split}
    \end{equation*}

To deal with $b$, write $b=\sum_k b_k$, where $b_k=b1_{Q_k}$. Then
$|P^{+}_\om(b)|\le\sum_k|P^{+}_\om(b_k)|$. For each~$k$, let $D_k$
be the circumscribed disc of $Q_k$ with center $z_k$ and let $D_k'$
be the concentric disc of double radius. Further, let
$\Omega'=\cup_k D_k'\cap\D$. Now that $b$ has mean value zero on
$Q_k$,
    \begin{equation}
    \begin{split}
    \int_{Q_k}b(\z)\dm(\z)
    &=\int_{Q_k}\left(f(\z)-g(\z)\right)\dm(\z)
    =\int_{Q_k}f(\z)\dm(\z)\\
    &\qquad-\int_{Q_k}\left(\frac{1}{\om(Q_k)}\int_{Q_k}f(z)\dm(z)\right)\dm(\z)=0,
    \end{split}
    \end{equation}
we deduce
    \begin{equation*}
    \begin{split}
    |P^{+}_\om(b_k)(z)|
    &=\left|\int_{Q_k}b(\z)|\overline{B_z^\omega(\z)}|\dm(\z)-\int_{Q_k}b(\z)|\overline{B_z^\omega(z_k)}|\dm(\z)\right|\\
    &\le\int_{Q_k}|b(\z)||B_z^\omega(\z)-B_z^\omega(z_k)|\dm(\z).
    \end{split}
    \end{equation*}
Consequently,
    \begin{equation*}
    \begin{split}
    &\int_{\D\setminus\Omega'}|P^{+}_\om(b)(z)|v(z)\dm(z)\\
    &\le\sum_k\int_{\D\setminus\Omega'}|P^{+}_\om(b_k)(z)|v(z)\dm(z)\\
    &\le\sum_k\int_{\D\setminus\Omega'}\left(\int_{Q_k}|b(\z)||B_z^\omega(\z)-B_z^\omega(z_k)|\dm(\z)\right)v(z)\dm(z)\\
    &=\sum_k\int_{Q_k}|b(\z)|\left(\int_{\D\setminus\Omega'}|B_z^\omega(\z)-B_z^\omega(z_k)|v(z)\dm(z)\right)\dm(\z)\\
    &\le \sum_k\int_{Q_k}|b(\z)|\left(\int_{\D\setminus D(z_k,2|\z-z_k|)}|B_z^\omega(\z)-B_z^\omega(z_k)|v(z)\dm(z)\right)\dm(\z).
    \end{split}
    \end{equation*}
There is an absolute constant $C>0$ such that $|\z-z_k|\le
C(1-|z_k|)$ for any $k$ and any $\z\in Q_k$. Hence the inner integral in each summand is bounded by a constant times $\inf_{a\in
D(z_k,\sqrt{2}(1-|z_k|)}M_\om(v)(a)$ by
Lemma~\ref{Lemma:local-integrated}. Therefore \eqref{eq:j1} yields
    \begin{equation}
    \begin{split}\label{**}
    \int_{\D\setminus\Omega'}|P^{+}_\om(b)(z)|v(z)\dm(z)
    &\lesssim\sum_k \inf_{a\in D(z_k,\sqrt{2}(1-|z_k|)}M_\om(v)(a)\int_{Q_k}|b(\z)|\dm(\z)\\
    &\le\sum_k\int_{Q_k}|b(\z)| M_\om(v)(\z)\dm(\z)\\
    &\le B_{1,\om}(v)\sum_k\int_{Q_k}|b(\z)|v(\z)\dm(\z)\\
    &\le B_{1,\om}(v)\int_\Omega|b(\z)|v(\z)\dm(\z)\\
    &\le B_{1,\om}(v)\int_\Omega|f(\z)|v(\z)\dm(\z)\\
    &\quad+B_{1,\om}(v)\int_\Omega|g(\z)|v(\z)\dm(\z)\\
    &\lesssim B^2_{1,\om}(v) \int_\Omega|f(\z)|v(\z)\dm(\z)\\
    &\lesssim B^2_{1,\om}(v)\|f1_R\|_{L^1_{\om}(v)}.
    \end{split}
    \end{equation}
Further,
    \begin{equation}\label{***}
    \begin{split}
    (v\om)(\{z:|P^{+}_\om(b)(z)|>\lambda\})
    &\le(v\om)(\{z:|P^{+}_\om(b)(z)|>\lambda\}\cap(\D\setminus\Omega'))\\
    &\quad+(v\om)(\{z:|P^{+}_\om(b)(z)|>\lambda\}\cap\Omega'),
    \end{split}
    \end{equation}
where
    $$
    (v\om)(\{z:|P^{+}_\om(b)(z)|>\lambda\}\cap(\D\setminus\Omega'))
    \le\frac1{\lambda}\int_{\D\setminus\Omega'}|P^{+}_\om(b)(z)|v(z)\dm(z)\lesssim\frac{\|f1_R\|_{L^1_{\om}(v)}}{\lambda}
    $$
by \eqref{**}. Since $\om\in\R$ such that
$\om([a,b])\asymp\om\left(\left[a,\frac{a+b}2\right]\right)\asymp\om\left(\left[\frac{a+b}2,b\right]\right)$
for all $0\le a,b\le1$ by the hypothesis, we have
$\om(Q_k)\asymp\om(D_k'\cap\D)$, and hence (iv) gives
    \begin{equation*}
    \begin{split}
    &(v\om)(\{z:|P^{+}_\om(b)(z)|>\lambda\}\cap\Omega')
    \le(v\om)(\Omega')
    \le\sum_k(v\om)(D_k'\cap\D)\\
    &\le\frac{1}{\lambda} \sum_k
    \frac{(v\om)(D_k'\cap\D))}{\om(Q_k)}\int_{Q_k} |f(z)|\dm(z)\\
    &\lesssim
    \frac{1}{\lambda} \sum_k
    \frac{(v\om)(D_k'\cap\D))}{\om(D_k'\cap\D)}\int_{Q_k}
    |f(z)|\dm(z)\\
    & \le \frac{1}{\lambda} \sum_k \int_{Q_k}|f(z)|M_\om(v)(z)\dm(z)\\
    &\le B_{1,\om}(v) \frac{1}{\lambda}\sum_k \int_{Q_k} |f(z)|v(z)\dm(z)
    \le B_{1,\om}(v)\frac{\|f1_R\|_{L^1_{\om}(v)}}{\lambda}.
    \end{split}
    \end{equation*}
Hence
    \begin{equation}\label{6}
    \begin{split}
    (v\om)(\{z:|P^{+}_\om(f1_R)(z)|>\lambda\})
    &\le(v\om)(\{z:|P^{+}_\om(g)(z)|>\lambda/2\})\\
    &\quad +(v\om)(\{z:|P^{+}_\om(b)(z)|>\lambda/2\})\\
    &\lesssim\frac{\|f1_R\|_{L^1_{\om}(v)}}{\lambda},\quad R\in\{R_1,R_2\},
    \end{split}
    \end{equation}
and thus we get (i). To be precise, this proof works only for $f\in L^1_\om$ because Lemma~\ref{Lemma:C-Z} is applied, but the general case follows by applying \eqref{6} to the function $\min\{f,n\}$ with $f$ non-negative and then letting $n\to\infty$.

Since (i) trivially implies (ii), it remains to show that
(ii) implies (iii). Let $S_1$ and $S_2$ be Carleson squares satisfying the hypothesis in Lemma~\ref{le:poinwisestimate}, and let $f$ a non-negative function supported on $S_1$. Further, choose
    $$
    \lambda=C\frac{\int_{S_1}f(\z)\dm(\z)}{\om(S_1)},
    $$
where $C$ is the constant appearing in \eqref{eq:poinwisestimate}. Since
    $
   \lambda(v\om)(\{z:|P_\om(f)(z)|>\lambda\})\lesssim\|f\|_{L^1_{\om}(v)}
    $
by the hypothesis, it follows by Lemma~\ref{le:poinwisestimate} that there exists $C_1>0$ such that
    $$
    \frac{(f\om)(S_1)}{\om(S_1)}(v\om)(S_2)\le C_1(fv\om)(S_1).
    $$
By choosing $f=1_E\om^{-1}$ for $E\subset S_1$ and applying Lebesgue differentiation theorem, we get
    $$
    \frac{(v\om)(S_2)}{\om(S_1)}\le C_1v(z)
    $$
for almost every $z\in S_1$. Since the same is true when the roles of $S_1$ and $S_2$ are interchanged, we deduce
    $$
    C_1v(z)\ge\frac{(v\om)(S_2)}{\om(S_1)}\ge\frac{\om(S_2)(v\om)(S_1)}{C_1\om(S_2)\om(S_1)}=\frac{1}{C_1}\frac{(v\om)(S_1)}{\om(S_1)}
    $$
for almost every $z\in S_1$. It follows that
    $$
    \sup_{S:z\in S}\frac{(v\om)(S)}{\om(S)}\lesssim v(z)
    $$
for almost every $z\in\D$. This
implies
    \begin{equation}\label{eq:sup1}
    \sup_{z\in D(a,r)}\frac{(v\om)(D(a,r)\cap\D)}{\om(D(a,r)\cap\D)}
    \lesssim v(z)
    \end{equation}
for almost every $z\in\D$, where the supremum runs over the discs touching the boundary. Moreover, the squares $S_1$ and $S_2$ in the statement of Lemma~\ref{le:poinwisestimate} can be replaced by Euclidean discs $D(a_1,
R(1-|a_1|)$ and $D(a_2, R(1-|a_1|)$, where $R$ is fixed and small
enough. By using this fact with the above reasoning in hand and
\eqref{eq:sup1}, we deduce $v\in B_{1,\om}$.
\end{Prf}

\section{Two-weight inequality for the positive operator $P^{+}_{\Psi,\mu}$}\label{Sec5}

The purpose of this section is to prove Theorem~\ref{th:twoweights}.
A reasoning similar to that in the proof of Theorem~\ref{Theorem:BB-generalized} then shows that Theorem~\ref{th:P+logsub} is an immediate consequence of this result.

\begin{theorem}\label{th:twoweights}
Let $1<p<\infty$ and $\mu$ be a positive Borel measure on $\D$, and let $v,u\in L^1_\mu$ non-negative.
Let $\Psi:D(1,1)\to\mathbb{C}$ be an analytic function such that its restriction to the interval $(0,2)$ is positive and the following conditions
hold:
\begin{enumerate}
\item[\rm(i)] $|\Psi(1-z)|\asymp \Psi(|1-z|) $ for all $z\in \D$;
\item[\rm(ii)] $\Psi$ is essentially decreasing on $(0,2)$;
\item[\rm(iii)] There exists a constant $C>0$ such that $\Psi(t)\le C \Psi(2t)$ for all $t\in(0,1)$;
\item[\rm(iv)] $|\Psi(1-z)|=|\Psi(1-\overline{z})| $ for all $z\in\D$.
\end{enumerate}
Then $P^{+}_{\Psi,\mu}: L^p_\mu(v)\to L^p_\mu(u)$ is bounded if and only
if there exist constants $C_0=C_0(p,\mu,v,u)>0$ and $C^\star_0=C^\star_0(p,\mu,v,u)>0$ such that
    \begin{equation}\label{j1}
     \left\|\M_{u^{1/p}}P^{+}_{\Psi,\mu}\M_{\sigma^{1/{p'}}}(1_{S}\sigma^{1/p})\right\|_{L^p_\mu}\le C_0
    \left\|1_{S}\sigma^{1/p}\right\|_{L^p_\mu}
\end{equation}
and
    \begin{equation}\label{j2}
    \left\|\M_{\sigma^{1/{p'}}}P^{+}_{\Psi,\mu}\M_{u^{1/p}}(1_{S}u^{1/p'})\right\|_{L^{p'}_\mu}\le C^\star_0
    \left\|1_{S}u^{1/p'}\right\|_{L^{p'}_\mu}
    \end{equation}
for all Carleson squares $S\subset\D$, where $\sigma=v^{1-p'}$. Moreover, there exists a constant $C_1=C_1(p,\mu)>0$  such that
    $$
    \left\| P^{+}_{\Psi,\mu}\right\|_{L^p_\mu(v)\to L^p_{\mu}(u)}\le C_1(C_0+C_0^\star).
    $$
\end{theorem}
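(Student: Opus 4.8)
The plan is to transfer the problem to the two positive dyadic operators $P^\beta_{\Psi,\mu}$, $\beta\in\{0,1/2\}$, and to establish a Sawyer-type testing characterization for each of them. The necessity of \eqref{j1} and \eqref{j2} is immediate: by the equivalence (A)$\Leftrightarrow$(C), the boundedness of $P^{+}_{\Psi,\mu}\colon L^p_\mu(v)\to L^p_\mu(u)$ is the same as the boundedness of $\M_{u^{1/p}}P^{+}_{\Psi,\mu}\M_{\sigma^{1/p'}}$ on $L^p_\mu$, and by hypothesis (iv) the kernel of $P^{+}_{\Psi,\mu}$ is real and symmetric in $z,\z$, so $P^{+}_{\Psi,\mu}$ is self-adjoint with respect to $\langle\cdot,\cdot\rangle_{L^2_\mu}$ and its adjoint $\M_{\sigma^{1/p'}}P^{+}_{\Psi,\mu}\M_{u^{1/p}}$ is bounded on $L^{p'}_\mu$. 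Evaluating these two operators on the test functions $1_S\sigma^{1/p}$ and $1_S u^{1/p'}$ and recalling \eqref{equivnorms} yields \eqref{j1} and \eqref{j2} with $C_0,C^\star_0\le\|P^{+}_{\Psi,\mu}\|_{L^p_\mu(v)\to L^p_\mu(u)}$.

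For sufficiency I would first apply Lemma~\ref{p.pointwise}, whose hypotheses are guaranteed by (i)--(iii), to obtain the two-sided comparison $K_\Psi\asymp K^0_\Psi+K^{1/2}_\Psi$ and hence $P^{+}_{\Psi,\mu}(h)\asymp P^0_{\Psi,\mu}(h)+P^{1/2}_{\Psi,\mu}(h)$ for $h\ge0$. Because all kernels are positive, the lower bound $K^\beta_\Psi\lesssim K_\Psi$ transfers \eqref{j1} and \eqref{j2}, in the equivalent form $\int_\D u\,(P^{+}_{\Psi,\mu}(1_S\sigma))^p\,d\mu\lesssim C_0^p\,\sigma(S)$ and its dual, to each $P^\beta_{\Psi,\mu}$ with constants $\lesssim C_0,C^\star_0$; the upper bound reduces the desired estimate for $\|P^{+}_{\Psi,\mu}\|_{L^p_\mu(v)\to L^p_\mu(u)}$ to bounding each $P^\beta_{\Psi,\mu}$ separately. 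Writing $S=S(I)$, $\tau_S=\Psi(|I|)\mu(S)/|I|$, $\langle h\rangle^\mu_S=\mu(S)^{-1}\int_S h\,d\mu$, $\sigma(S)=\int_S\sigma\,d\mu$ and $u(S)=\int_S u\,d\mu$, each dyadic operator has the canonical positive form $P^\beta_{\Psi,\mu}(h)=\sum_S\tau_S\langle h\rangle^\mu_S\,1_S$, the sum running over the dyadic lattice of Carleson squares $\{S(I):I\in\mathcal D^\beta\}$.

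It thus remains to prove a dyadic two-weight theorem: a positive dyadic operator $T(h)=\sum_S\tau_S\langle h\rangle^\mu_S 1_S$ maps $L^p_\mu(v)\to L^p_\mu(u)$ boundedly once the testing conditions $\int_\D u\,(T(1_S\sigma))^p\,d\mu\lesssim\sigma(S)$ and its dual hold for all Carleson squares $S$. By (A)$\Leftrightarrow$(B) and duality in $L^p_\mu(u)$, this is equivalent to the bilinear estimate
    \begin{equation*}
    \Lambda(f,g)=\sum_S\tau_S\,\frac{\sigma(S)u(S)}{\mu(S)}\,\langle f\rangle^\sigma_S\langle g\rangle^u_S
    \lesssim(C_0+C^\star_0)\,\|f\|_{L^p_\mu(\sigma)}\,\|g\|_{L^{p'}_\mu(u)},
    \end{equation*}
where $\langle f\rangle^\sigma_S=\sigma(S)^{-1}\int_S f\sigma\,d\mu$ and $\langle g\rangle^u_S=u(S)^{-1}\int_S g u\,d\mu$. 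Here I would run the parallel stopping-time (corona) argument in the spirit of \cite{AlPoRe,PottRegueraJFA13}: construct a stopping family $\mathcal F$ for $f$ relative to $\sigma$ and a stopping family $\mathcal G$ for $g$ relative to $u$ along which the respective averages grow geometrically, split the sum over $S$ according to its $\mathcal F$- and $\mathcal G$-stopping parents, replace $\langle f\rangle^\sigma_S$ and $\langle g\rangle^u_S$ by the stopping values, use the testing conditions to bound the local sums $\sum_S\tau_S\sigma(S)u(S)/\mu(S)$ inside a common stopping cube $Q$ by the testing constant times $\sigma(Q)$ (respectively $u(Q)$), and finally collapse the resulting double sum over $\mathcal F\times\mathcal G$ by the Carleson embedding theorem and H\"older's inequality into $\|f\|_{L^p_\mu(\sigma)}\|g\|_{L^{p'}_\mu(u)}$.

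The crux of the matter is precisely this dyadic bilinear estimate. The delicate point is that the testing hypotheses only control the operator acting on indicators of a single square, whereas $\Lambda$ couples two independent averaging processes; reconciling them forces the simultaneous use of both stopping structures, and the passage from the local testing bounds to the global sum rests on a Carleson packing estimate for the stopping cubes. Moreover, unlike the Hilbert space case $p=2$, for general $1<p<\infty$ no Bellman-function shortcut is available, so the stopping-time bookkeeping must be carried out with the exponents $p$ and $p'$ kept in balance throughout; this is where essentially all of the work lies. Once the dyadic bound is established with norm $\lesssim C_0+C^\star_0$ for each $\beta$, summing over $\beta\in\{0,1/2\}$ and invoking the upper bound in Lemma~\ref{p.pointwise} gives $\|P^{+}_{\Psi,\mu}\|_{L^p_\mu(v)\to L^p_\mu(u)}\le C_1(C_0+C^\star_0)$, as claimed.
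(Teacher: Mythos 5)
Your necessity argument and your reduction to the dyadic operators coincide with the paper's proof: the equivalence (A)$\Leftrightarrow$(C) together with hypothesis (iv) gives \eqref{j1} and \eqref{j2}, and the two-sided kernel estimate \eqref{eq:kernelcomp} of Lemma~\ref{p.pointwise} transfers the testing conditions to each $P^{\beta}_{\Psi,\mu}$ and transfers their boundedness back to $P^{+}_{\Psi,\mu}$. Where you diverge is in the core dyadic two-weight theorem, which is where, as you yourself note, essentially all of the work lies. The paper's Theorem~\ref{th:maind} (following Treil \cite{Treil12}, and ultimately \cite{NaTrVoAmerJ99}) does \emph{not} run a parallel corona: it first splits the dyadic squares into $\mathcal S_1\cup\mathcal S_2$ according to the pointwise comparison \eqref{e.crit} between $\left(\mathbb E^{\mu\sigma}_S f\right)^p(\sigma\mu)(S)$ and $\left(\mathbb E^{\mu u}_S g\right)^{p'}(u\mu)(S)$, so that each piece $T_i$ requires only \emph{one} stopping family (built from $f$ and $\sigma$ on $\mathcal S_1$, from $g$ and $u$ on $\mathcal S_2$), and it aims at the asymmetric estimate \eqref{e.goal} with the extra term $B\|f\|_{L^p_\mu(\sigma)}^p$ rather than a symmetric bilinear bound: whenever $g$-data appears inside the $f$-corona it is converted into $f$-data via \eqref{e.crit}. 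This device exists precisely to avoid reconciling two independent stopping structures.

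That reconciliation is where your sketch, as stated, does not close. Two concrete points. First, the local bound you invoke is dimensionally off: by \eqref{e.test} and H\"older, $\sum_{S\subset Q}\tau_S(\sigma\mu)(S)(u\mu)(S)/\mu(S)=\int_Q T_Q(\sigma 1_Q)\,u\,d\mu\le C_0^{1/p}(\sigma\mu)(Q)^{1/p}(u\mu)(Q)^{1/p'}$, not ``the testing constant times $\sigma(Q)$.'' Second, and more seriously, when you ``collapse the resulting double sum over $\mathcal F\times\mathcal G$ by the Carleson embedding theorem and H\"older's inequality,'' consider the pairs $(F,G)$ with $G\subsetneq F$: after replacing $\langle f\rangle^\sigma_S$ by the stopping value $\langle f\rangle^\sigma_F$ and using the local bound above, the H\"older--Carleson step needs a packing estimate of the form $\sum_{G\in\mathcal G,\,G\subset F}(\sigma\mu)(G)\lesssim(\sigma\mu)(F)$; this is unavailable, because $\mathcal G$ is constructed from $g$ and $u$ and its members satisfy no $\sigma$-Carleson condition. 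The parallel-corona route can be made to work (this is the Sawyer-type argument of Lacey, Sawyer and Uriarte-Tuero, later streamlined by H\"anninen), but it requires additional machinery --- maximal functions adapted to the stopping cubes or a sequence-space duality over coronas --- none of which appears in your outline. So either import that machinery explicitly, or replace the parallel corona by the splitting \eqref{e.crit}, which is exactly how the paper's proof of Theorem~\ref{th:maind}, and hence of Theorem~\ref{th:twoweights} via Corollary~\ref{c.twodB}, goes through.
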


As in the one-weight case $P^+_{\Psi,\mu}:L_\mu^p(v)\to L_\mu^p(v)$ given in Corollary~\ref{co:sufp>1} it is more convenient to consider first a dyadic model. To do this, let $\mathbb E^{\mu}_{S}f$ and $\mathbb E^{\sigma\mu}_{S}f$ denote the expectations of a function $f$ over a square $S$ with respect to the measures $\mu$ and $\sigma\,d\mu$, respectively.  Given a dyadic grid $\mathcal D$ on $\T$ and a sequence $\tau=\{\tau_{S(I)}\}_{I\in \mathcal D}$ of nonnegative numbers, consider the dyadic positive operator defined by
    \begin{equation}\label{e.dyadop0}
    T(f)=T_{\mu,\tau,\mathcal{D}}(f)=\sum_{I\in\mathcal D}\tau_{S(I)}(\mathbb E^{\mu}_{S(I)}f)1_{S(I)}.
    \end{equation}
   Given $I\subset\mathcal{ D}$  we can identify it with its associated Carleson
square $S(I)$. So, via this identification, for a dyadic grid $\mathcal D$ on $\T$ we shall simply write
 \begin{equation}\label{e.dyadop}
    T(f)=T_{\mu,\tau,\mathcal{D}}(f)=\sum_{I\in\mathcal D}\tau_{S}(\mathbb E^{\mu}_{S}f)1_{S}
    \end{equation}
for the corresponding dyadic positive operator.

The following theorem characterizes the boundedness of the operator $T$ in the two-weight setting.  See \cite{PottRegueraJFA13, Sawyer2, Treil12}.

\begin{theorem}\label{th:maind}
Let $1<p<\infty$, $\mu$ be a positive Borel measure on $\D$, $\sigma,u\in L^1_\mu$ non-negative and let $T=T_{\mu,\tau,\mathcal{D}}$ be the dyadic positive operator defined in \eqref{e.dyadop}. Then $T(\sigma\cdot):L_\mu^{p}(\sigma)\to L_\mu^{p}(u)$ is bounded if and only if there exist constants $C_0=C_0(p,\mu,\s,u)>0$ and $C^\star_0=C^\star_0(p,\mu,\s,u)>0$ such that
    \begin{equation}\label{e.test}
    \| T(\sigma 1_{S})\|_{L_\mu^{p}(u)}^{p}\le C_0(\sigma\mu)(S)
    \end{equation}
and
    \begin{equation}\label{e.testd}
    \| T^\star(u 1_{S})\|_{L_\mu^{p'}(\sigma)}^{p'}\le C^\star_0(u\mu)(S)
    \end{equation}
for all $S\in\mathcal D$. Moreover, there exists a constant $C_1=C_1(p,\mu)>0$ such that
    $$
    \|T(\sigma\cdot)\|_{L_\mu^{p}(\sigma)\to  L_\mu^{p}(u)}\le C_1(C_0 + C^\star_0).
    $$
\end{theorem}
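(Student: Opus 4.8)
To prove Theorem~\ref{th:maind}, the plan is to dispose of necessity at once and then concentrate entirely on sufficiency, which I would obtain by duality and a parallel stopping-time argument. Necessity is immediate: inserting $f=1_S$ into the definition of boundedness of $T(\sigma\,\cdot\,)$ gives \eqref{e.test} with $C_0=\|T(\sigma\,\cdot\,)\|^p$, and since $T$ is self-adjoint with respect to $\langle\cdot,\cdot\rangle_{L^2_\mu}$ (so that $T^\star=T$), the analogous test for the adjoint inequality is just \eqref{e.test} with $\sigma,u$ and $p,p'$ interchanged, which is \eqref{e.testd}. For sufficiency I would write $\langle h\rangle^{\sigma}_S=\frac{1}{(\sigma\mu)(S)}\int_S h\,\sigma\,d\mu$ and $\langle h\rangle^{u}_S=\frac{1}{(u\mu)(S)}\int_S h\,u\,d\mu$, so that, by \eqref{equivnorms}, boundedness of $T(\sigma\,\cdot\,):L^p_\mu(\sigma)\to L^p_\mu(u)$ is equivalent to the bilinear estimate
\[
\Lambda(f,g):=\sum_{S}a_S\,\langle f\rangle^{\sigma}_S\,\langle g\rangle^{u}_S\lesssim\|f\|_{L^p_\mu(\sigma)}\,\|g\|_{L^{p'}_\mu(u)},\qquad a_S:=\frac{\tau_S\,(\sigma\mu)(S)\,(u\mu)(S)}{\mu(S)},
\]
for all non-negative $f,g$, with implied constant to be shown $\lesssim C_0^{1/p}+(C_0^\star)^{1/p'}$. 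After restricting $\mathcal D$ to a finite subcollection (and verifying at the end that the bound is independent of the restriction, so that monotone convergence closes the argument) I may assume all sums are finite.

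The engine is a pair of parallel corona decompositions. Adapted to $f$ and $\sigma\mu$, I would select stopping squares $\mathcal F\subset\mathcal D$ so that each $S$ has a minimal stopping ancestor $\pi_{\mathcal F}(S)=F$ with $\langle f\rangle^{\sigma}_S\le 2\langle f\rangle^{\sigma}_F$, while the disjoint corona bases satisfy $(\sigma\mu)\big(F\setminus\bigcup_{F'}F'\big)\ge\tfrac12(\sigma\mu)(F)$ (the union over the stopping children $F'$ of $F$). Since $\langle f\rangle^{\sigma}_F\le M_{\sigma\mu,\mathcal D}f$ on that base, Lemma~\ref{Lemma:maximal-dyadic} applied with $\nu=\sigma\mu$ upgrades this to the Carleson embedding $\sum_{F\in\mathcal F}(\langle f\rangle^{\sigma}_F)^p(\sigma\mu)(F)\lesssim\|f\|_{L^p_\mu(\sigma)}^p$. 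An identical construction adapted to $g$ and $u\mu$ yields $\mathcal G\subset\mathcal D$ with $\langle g\rangle^{u}_S\le2\langle g\rangle^{u}_{\pi_{\mathcal G}(S)}$ and $\sum_{G\in\mathcal G}(\langle g\rangle^{u}_G)^{p'}(u\mu)(G)\lesssim\|g\|_{L^{p'}_\mu(u)}^{p'}$.

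With both coronas in hand I would organize the bilinear sum by the pair of stopping ancestors. The two average dominations give
\[
\Lambda(f,g)\le 4\sum_{F\in\mathcal F}\sum_{G\in\mathcal G}\langle f\rangle^{\sigma}_F\,\langle g\rangle^{u}_G\sum_{\substack{S:\ \pi_{\mathcal F}(S)=F,\ \pi_{\mathcal G}(S)=G}}a_S .
\]
The inner sum is empty unless $F$ and $G$ are nested, so by self-adjoint symmetry I would treat $G\subseteq F$ and $F\subsetneq G$ identically (the second case invoking the dual testing \eqref{e.testd}); consider $G\subseteq F$. The structural point is that $G\subseteq F$, $\pi_{\mathcal F}(S)=F$ and $S\subseteq G$ force $F=\pi_{\mathcal F}(G)$, so that $F$ is \emph{determined} by $G$. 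Bounding the inner sum crudely by $\sum_{S\subseteq G}a_S$ and recognising $\sum_{S\subseteq G}a_S=\int_G\big(\sum_{S\subseteq G}\tau_S\frac{(\sigma\mu)(S)}{\mu(S)}1_S\big)u\,d\mu\le\int_G T(\sigma 1_G)\,u\,d\mu$, Hölder's inequality together with \eqref{e.test} yields $\sum_{S\subseteq G}a_S\le C_0^{1/p}(\sigma\mu)(G)^{1/p}(u\mu)(G)^{1/p'}$. Substituting and applying Hölder in $\ell^p$--$\ell^{p'}$ over $G\in\mathcal G$, the $G$-factor recombines via the second embedding into $\|g\|_{L^{p'}_\mu(u)}$, while the $F$-factor, after $\sum_{G:\,\pi_{\mathcal F}(G)=F}(\sigma\mu)(G)\le(\sigma\mu)(F)$ (the $G$'s being disjoint subsets of $F$), recombines via the first embedding into $\|f\|_{L^p_\mu(\sigma)}$. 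This closes the case with constant $\lesssim C_0^{1/p}$, and the symmetric case contributes $\lesssim(C_0^\star)^{1/p'}$, giving the operator-norm bound of the theorem.

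The step I expect to demand the most care is the corona construction and its Carleson embedding: one must check that the geometric gap in the stopping rule yields the base estimate $(\sigma\mu)(F\setminus\bigcup F')\ge\tfrac12(\sigma\mu)(F)$ and convert it, through the weak-type bound of Lemma~\ref{Lemma:maximal-dyadic}, into the $L^p$ (respectively $L^{p'}$) embedding, all uniformly in the finite truncation of $\mathcal D$ so that the limiting argument is legitimate. By contrast, once the two coronas are available the off-diagonal recombination is uniform in $p$: unlike for singular integrals, the positivity of the coefficients $a_S$ lets one dominate each corona block by a single testing quantity, so no $p=2$-specific orthogonality is needed and the one Hölder argument serves all $1<p<\infty$.
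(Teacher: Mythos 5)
Your overall architecture (duality, reduction to the bilinear form $\sum_S a_S\langle f\rangle^\sigma_S\langle g\rangle^u_S$, two stopping families with Carleson embeddings, splitting by nested pairs of stopping ancestors) is the genuine ``parallel corona'' route, which differs from the paper's proof: the paper follows Treil's trick of using a \emph{single} corona adapted to $(f,\sigma)$ after first splitting the squares into $\mathcal S_1$ and $\mathcal S_2$ according to whether $(\mathbb E^{\sigma\mu}_S f)^p(\sigma\mu)(S)\ge(\mathbb E^{u\mu}_S g)^{p'}(u\mu)(S)$, the defining inequality \eqref{e.crit} being exactly what converts $g$-quantities into $f$-quantities in the term $II$ and yields the asymmetric bound \eqref{e.goal}. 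However, your execution has a genuine gap at the recombination step. After applying the testing condition \eqref{e.test} localized to $G$ you arrive at the factor $\bigl(\sum_{G\in\mathcal G}(\langle f\rangle^\sigma_{\pi_{\mathcal F}(G)})^p(\sigma\mu)(G)\bigr)^{1/p}$, and you dispose of it by asserting $\sum_{G:\,\pi_{\mathcal F}(G)=F}(\sigma\mu)(G)\le(\sigma\mu)(F)$ ``the $G$'s being disjoint subsets of $F$.'' This is false: $\mathcal G$ is a full stopping \emph{tree}, so distinct $G\in\mathcal G$ with the same $\mathcal F$-ancestor are typically nested, not disjoint. For instance, if $f$ has slowly varying $\sigma$-averages then $\mathcal F$ consists of the single top square $S_0$, every $G\in\mathcal G$ has $\pi_{\mathcal F}(G)=S_0$, and for a decreasing chain $G_0\supsetneq G_1\supsetneq\cdots$ in $\mathcal G$ with $\sigma\mu$ concentrated near $\bigcap_k G_k$ one gets $\sum_k(\sigma\mu)(G_k)=\infty\cdot(\sigma\mu)(S_0)$. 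The family $\mathcal G$ satisfies a Carleson packing condition with respect to $u\mu$ (by its construction and Lemma~\ref{Lemma:maximal-dyadic}), not with respect to $\sigma\mu$, so there is no way to rescue this inequality as stated.

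The standard repair of the parallel corona — and the reason your ``crude'' bound $\sum_{S:\pi_{\mathcal F}(S)=F,\,\pi_{\mathcal G}(S)=G}a_S\le\sum_{S\subseteq G}a_S$ discards exactly the information you need — is to \emph{not} invoke testing at $G$. Instead keep the localized functions $\Phi_G=\sum_{S:\pi_{\mathcal F}(S)=F,\,\pi_{\mathcal G}(S)=G}\tau_S\frac{(\sigma\mu)(S)}{\mu(S)}1_S$, bound the pair sum by $\|\Phi_G\|_{L^p_\mu(u)}(u\mu)(G)^{1/p'}$ via H\"older, and then, for fixed $F$, use that the index sets $\{S:\pi_{\mathcal F}(S)=F,\ \pi_{\mathcal G}(S)=G\}$ are pairwise disjoint in $G$, together with the elementary inequality $\sum_G\Phi_G^p\le\bigl(\sum_G\Phi_G\bigr)^p$ for nonnegative terms, to get $\sum_{G:\pi_{\mathcal F}(G)=F}\|\Phi_G\|^p_{L^p_\mu(u)}\le\|T(\sigma 1_F)\|^p_{L^p_\mu(u)}\le C_0(\sigma\mu)(F)$; that is, testing is applied at the stopping square $F$, where the $\mathcal F$-Carleson embedding \eqref{e.lbounded} can absorb it. With that correction your argument closes and gives the stated bound $C_1(C_0+C_0^\star)$; alternatively you could adopt the paper's $\mathcal S_1/\mathcal S_2$ splitting, which avoids the second corona altogether at the price of the non-symmetric estimate \eqref{e.goal}.
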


Let now $\sigma$ be a weight and $f$ a locally integrable function in $\D$. Let $S_{0}\in\mathcal D$ and denote $\mathcal D_{0}=\{S\in\mathcal D:S\subset S_{0}\}$. Further, let
    $$
    \mathcal L(S_{0})=\{S\in\mathcal D_{0}:S \text{ is a maximal Carleson square in $\mathcal{D}_0$ such that } \mathbb E^{\sigma\mu}_{S}|f|
    > 4\mathbb E^{\sigma\mu}_{S_{0}}|f|\}.
    $$
Define $\mathcal L_0= \{S_0\}$ and $\mathcal L_{i}= \cup_{L\in\mathcal L_{i-1} }\mathcal L(L)$ for all $i\in\N$, and denote the union of all the stopping squares by $\mathcal L= \cup_{i\geq 0} \mathcal L_{i}$. For $S\in\mathcal D_0$, let $\lambda(S)$ be the minimal square $L\in \mathcal L$ such that $S\subset L$ and let $\mathcal D(L)=\{S\in\mathcal D_{0}:\lambda(S)=L\}$.

The stopping squares $\mathcal{L}$ can be used to linearise the maximal function $M_\nu$. More precisely, we have the pointwise estimate
    \begin{equation}\label{e.pointwise}
    \sum_{L\in\mathcal L}(\mathbb E_{L}^{\sigma\mu}|f|)1_{L}(z) \lesssim M_{\sigma\mu}f(z), \quad z\in \D.
    \end{equation}
To see this, assume $z\in S_{0}$ for some $S_{0}\in\mathcal L_{0}$, for otherwise the inequality is trivial because the left hand side is zero.
Then there exists a stopping square $L'\in \LL$ with minimal side length containing $z$. The expectations increase geometrically, that is,
    $$
    \mathbb E_{L}^{\sigma\mu}|f|>4\mathbb E_{\widetilde{L}}^{\sigma\mu}|f|, \quad L,\widetilde{L}\in \mathcal L, \quad L\subsetneq \widetilde{L},
    $$
therefore
    $$
    \sum_{\substack{L\in \mathcal L\\ z\in L}} \mathbb {E}_{L}^{\sigma\mu}|f|\le \mathbb {E}_{L'}^{\sigma\mu}|f| \sum_{j=0}^\infty 4^{-j} \lesssim M_{\sigma\mu}f(z),
    $$
concluding the proof of \eqref{e.pointwise}.

An application of \eqref{e.pointwise} and \eqref{e.mbounded}
provides the useful inequality
    \begin{equation}
    \label{e.lbounded}
    \sum_{L\in \mathcal L} (\mathbb E_{L}^{\sigma\mu}|f|)^{p} \sigma\mu(L)\lesssim \| f\|_{L_\mu^{p}(\sigma)}.
    \end{equation}

\begin{Prf}{\em{Theorem~\ref{th:maind}}.}
We will assume there is a finite collection of dyadic squares $\mathcal S$ in the definition of the operator $T$, and we will prove the operator norm is independent of the chosen collection. So from now on
    $$
    Tf=\sum_{S\in \mathcal S} \tau_{S}(\mathbb E^{\mu}_{S}f) 1_{S}.
    $$
It is enough to prove boundedness of the bilinear form $\langle T(\sigma f), gu \rangle_{L^2_\mu}$, where $f\in L_\mu^{p}(\sigma)$ and $g\in L_\mu^{p'}(u)$ are positive. Following the argument in \cite{Treil12}, we seek an estimate of the form
    \begin{equation}\label{e.goal}
    \langle T(\sigma f),gu\rangle_{L^2_\mu}\le A\| f\|_{L_\mu^{p}(\sigma)}\| g\|_{L_\mu^{p'}(u)}+B\| f\|_{L_\mu^{p}(\sigma)}^{p}.
    \end{equation}
We first divide the squares in $\mathcal S$ into two collections $\mathcal S_{1}$ and $\mathcal S_{2}$ according to the following criterion. A square $S$ will belong to $\mathcal{S}_{1}$, if
    \begin{equation}\label{e.crit}
    \left( \mathbb E^{\mu \sigma}_{S}f\right)^{p}\mu \sigma(S)\geq \left( \mathbb E^{\mu u}_{S}g\right)^{p'}\mu u(S),
    \end{equation}
and it will belong to $\mathcal{S}_{2}$ otherwise.
This reorganization of the Carleson squares allows us to write $T=T_{1}+T_{2}$, where
    $$
    T_{i}f=\sum_{S\in \mathcal S_{i}}\tau_{S}(\mathbb E^{\mu}_{S}f )1_{S},\quad i=1,2.
    $$
The idea of writing $T$ as the sum of $T_1$ and $T_2$ was already present in the work of Treil \cite{Treil12} and previously in the work of Nazarov, Treil and Volberg \cite{NaTrVoAmerJ99}. We will prove boundedness of $T_{1}$ using the testing condition \eqref{e.test}. The boundedness of $T_{2}$ can be proven analogously to $T_1$, only using \eqref{e.testd} this time. First note that
    \begin{equation*}
    \begin{split}
    \langle T_{1}(\sigma f), gu \rangle_{L^2_\mu}
    &=\sum_{S\in \mathcal S_{1}} \tau_{S}\mathbb E^{\mu}_{S}(f\sigma) \langle gu, 1_{S}\rangle_{L^2_\mu}
    =\sum_{L\in \mathcal L}\sum_{S\in \mathcal D(L)}  \tau_{S}\mathbb E^{\mu}_{S}(f\sigma) \langle gu, 1_{S}\rangle_{L^2_\mu}\\
    &=\sum_{L\in \mathcal L} \langle T_{L}(\sigma f), gu \rangle_{L^2_\mu},
    \end{split}
    \end{equation*}
where $\LL$ is a collection of stopping Carleson squares in the family $\mathcal{S}_1$, to be specified below, and
    $
    T_{L}f= \sum_{S\in \mathcal D(L)}\tau_{S}\mathbb E^{\mu}_{S}(f)1_{S}.
    $
To find the collection of stopping Carleson squares $\LL$, we define $\mathcal L_{0}$ as the collection of maximal Carleson squares in the family $\mathcal S_{1}$, and follow the definition after Theorem~\ref{th:maind} for given $f$ and $\sigma$ to define $\mathcal L$, with $\mathcal S_1 $ as our family of dyadic Carleson squares. Clearly,
    \begin{equation}
    \label{e.nose}
    \sum_{L\in\mathcal L} \langle T_{L}(\sigma f), gu \rangle_{L^2_\mu}
    =\sum_{L\in\mathcal L}\int T_{L}(\sigma f)(z) g(z)u(z)\,d\mu(z)= I+II,
    \end{equation}
where
    $$
    I=\sum_{i}\sum_{L\in\mathcal L_{i}} \int_{\displaystyle L\setminus \cup_{\substack{L'\in \mathcal L_{i+1}\\L'\subset L}}L'}T_{L}(\sigma f)(z) g(z)u(z)\,d\mu(z),
    $$
and
    $$
    II=\sum_{i}\sum_{L\in\mathcal L_{i}} \int_{\displaystyle \cup_{\substack{L'\in \mathcal L_{i+1}\\L'\subset L}}L'}T_{L}(\sigma f)(z) g(z)u(z)\,d\mu(z).
    $$
To deal with $I$, we estimate the norm of $T_{L}$. By using the testing condition \eqref{e.test} and the fact that $S\in \mathcal D(L)$ are not stopping Carleson squares, we deduce
    \begin{equation}\label{e.normtl}
    \begin{split}
    \|T_{L}(\sigma f)\|_{L_\mu^{p}(u)}^{p}
    &=\left\|  \sum_{S\in \mathcal D(L)}  \tau_{S}\mathbb E^{\mu}_{S}(f\sigma)1_{S}\right\|_{ L_\mu^{p}(u)}^{p}
    =\left\|  \sum_{S\in \mathcal D(L)} \frac{\mu \sigma(S)}{\mu (S)}  \tau_{S}\mathbb E^{\sigma\mu}_{S}(f)1_{S}\right\|_{ L_\mu^{p}(u)}^{p}\\
    &\le4^p\left(\mathbb E_{L}^{\sigma\mu}(f)\right)^{p} \left\|  \sum_{S\in \mathcal D(L)} \frac{\mu \sigma(S)}{\mu (S)} \tau_{S}1_{S}\right\|_{ L_\mu^{p}(u)}^{p}\\
    &\le4^p\left(\mathbb E_{L}^{\sigma\mu}(f)\right)^{p} \left\|  T(\sigma 1_{L})\right\|_{ L_\mu^{p}(u)}^{p}
    \le4^pC_{0}  \left(\mathbb E_{L}^{\sigma\mu}(f)\right)^{p}\sigma\mu(L).
    \end{split}
    \end{equation}
Since $\displaystyle \cup_{i}\cup_{L\in \mathcal L_{i}}L\setminus
\cup_{\substack{L'\in \mathcal L_{i+1}\\L'\subset L}} L'$ forms a
collection of disjoint sets in $\mathcal L_{0}$, H{\"o}lder's
inequality,  \eqref{e.normtl} and \eqref{e.lbounded} yield
    \begin{equation}\label{1}
    \begin{split}
    I&\le\sum_{i}\sum_{L\in\mathcal L_{i}} \| T_{L}(f\sigma)\|_{ L_\mu^{p}(u)} \| g1_{L\setminus \cup_{\substack{L'\in \mathcal L_{i+1}\\L'\subset L}}L'}\|_{L_\mu^{p'}(u)}\\
    &\le\left( \sum_{i}\sum_{L\in\mathcal L_{i}} \| T_{L}(f\sigma)\|_{ L_\mu^{p}(u)}^{p}  \right)^{1/p} \left( \sum_{i}\sum_{L\in\mathcal L_{i}} \| g1_{L\setminus \cup_{\substack{L'\in \mathcal L_{i+1}\\L'\subset L}}L'}\|_{L_\mu^{p'}(u)}^{p'}\right)^{1/p'}\\
    &\le4 C_{0}^{1/p}\left ( \sum_{L\in \mathcal L} \left(\mathbb E_{L}^{\mu \sigma}f\right)^{p} \sigma\mu(L)\right)^{1/p}\|g\|_{L_\mu^{p'}(u)}
    \lesssim C_{0}\| f\|_{L_\mu^{p}(\sigma)} \| g\|_{L_\mu^{p'}(u)}.
    \end{split}
    \end{equation}
We now turn to II. If $L\in \mathcal L$ be fixed, then the operator
$T_{L}(f\sigma)$ is constant on $L'$, where $L'\in \mathcal L, \,
L'\subsetneq L$. That is, $L'$ is contained in some Carleson square
$S$ of the family $\mathcal{D}(L)$.  We will denote this constant by
$T_{L}(f\sigma)(L')$. For a fixed $L\in \mathcal L_{i}$, this,
H{\"o}lder's inequality, \eqref{e.normtl} and the hypothesis
\eqref{e.crit} yield
    \begin{equation*}
    \begin{split}
    \int_{ \cup_{\substack{L'\in \mathcal L_{i+1}\\L'\subset L}}L'}T_{L}(\sigma f)(z) g(z)u(z)\,d\mu(z)
    &=\sum_{\substack{L'\in \mathcal L_{i+1}\\L'\subset L}} T_{L}(f\sigma)(L')\int_{L'}g(z)u(z)\,d\mu(z)\\
    &=\sum_{\substack{L'\in \mathcal L_{i+1}\\L'\subset L}} T_{L}(f\sigma)(L')\frac{\int_{L'}g(z)u(z)\,d\mu(z)}{\mu u(L')}\mu u(L')\\
    &=\sum_{\substack{L'\in \mathcal L_{i+1}\\L'\subset L}} \int_{L'} T_{L}(f\sigma)(z) \left(\mathbb E_{L'}^{\mu u}g\right)u(z)\,d\mu(z)\\
    &=\int_{L} T_{L}(f\sigma)(z) \left(\sum_{\substack{L'\in \mathcal L_{i+1}\\L'\subset L}} \mathbb E_{L'}^{\mu u}g 1_{L'}(z)\right) u(z)\,d\mu(z)\\
    &\le\left\|   T_{L}(f\sigma)\right\|_{L_\mu^{p}(u)  }    \left\|      \sum_{\substack{L'\in \mathcal L_{i+1}\\ L'\subset L}} \mathbb E_{L'}^{\mu u}g 1_{L'} \right\|_{L_\mu^{p'}(u)}\\
    &=\| T_{L}(f\sigma)\|_{L_\mu^{p}(u)} \left( \sum_{\substack{L'\in \mathcal L_{i+1}\\L'\subset L}} (\mathbb E_{L'}^{\mu u}g)^{p'}\mu u(L')  \right)^{1/p'}\\
    &\le4 C_0 (\mathbb E_{L}^{\mu \sigma}f) \mu \sigma(L)^{1/p}\left( \sum_{\substack{L'\in \mathcal L_{i+1}\\L'\subset L}} (\mathbb E_{L'}^{\mu \sigma}f)^{p}\mu \sigma(L') \right)^{1/p'}.
    \end{split}
    \end{equation*}
By summing this estimate in $L$ and using \eqref{e.lbounded},  we obtain
    \begin{equation}\label{2}
    \begin{split}
    II&\lesssim\sum_{i}\sum_{L\in\mathcal L_{i}} (\mathbb E_{L}^{\mu \sigma}f) \mu \sigma(L)^{1/p}\left( \sum_{\substack{L'\in \mathcal L_{i+1}\\L'\subset L}} (\mathbb E_{L'}^{\mu \sigma}f)^{p}\mu \sigma(L') \right)^{1/p'}\\
    &\lesssim \left( \sum_{L\in \mathcal L} (\mathbb E_{L}^{\mu \sigma} f)^{p}\mu \sigma(L)\right)^{1/p} \left( \sum_{i}\sum_{L\in\mathcal L_{i}}\sum_{\substack{L'\in \mathcal L_{i+1}\\L'\subset L}} (\mathbb E_{L'}^{\mu \sigma}f)^{p}\mu \sigma(L') \right)^{1/p'}\\
    &\lesssim\| f\|_{L_\mu^{p}( \sigma)} \| f\|_{L_\mu^{p}(\sigma)}^{p/p'} \lesssim \| f\|_{L_\mu^{p}(\sigma)}^{p}.
    \end{split}
    \end{equation}
By combining \eqref{1} and \eqref{2}, we get \eqref{e.goal}.
\end{Prf}

\medskip

We now turn to the two-weight inequality for the case of the operator $P^{+}_{\Psi,\mu}$ and its associated dyadic model $P^{\beta}_{\Psi,\mu}$.

Taking $\mathcal{D}^{\beta}$, $\beta\in\{0,1/2\}$,
one of the dyadic grids on $\T$ defined in \eqref{concretegrid} and choosing $\tau_{S(I)}=\frac{\Psi(|I|)\mu(S_{I})}{|I|}$, we obtain the following result as a byproduct of Theorem~\ref{th:maind}.

\begin{corollary}\label{c.twodB}
Let $\beta\in\{0,1/2\}$, $\Psi$ be a positive function on $(0,2)$, $\mu$ be a positive Borel measure on $\D$ and $\sigma,u\in L^1_\mu$ non-negative. Then
    $
    P_{\Psi,\mu}^{\beta}(\sigma\cdot):L_\mu^{p}(\sigma)\rightarrow  L_\mu^{p}(u)
    $
is bounded if and only if there exist constants $C_0=C_0(p,\mu,\sigma,u)>0$ and $C^{\star}_0=C^\star_0(p,\mu,\sigma,u)>0$ such that
    \begin{equation}\label{e.testpbeta}
    \left \| \sum_{\substack{I\in \mathcal D^{\beta}\\ I\subset I_{0}}}\left\langle \sigma 1_{S(I_0)}, \frac{\Psi(|I|)1_{S(I)}}{|I|}\right\rangle_{L^2_\mu} 1_{S(I)} \right\|_{L_\mu^{p}(u)}^{p}\le C_0\mu\sigma(S(I_0)),
    \end{equation}
and
    \begin{equation}\label{e.testpbetad}
    \left\| \sum_{\substack{I\in \mathcal D^{\beta}\\ I\subset I_{0}}}\left \langle u 1_{S(I_0)}, \frac{\Psi(|I|)1_{S(I)}}{|I|}\right\rangle_{L^2_\mu} 1_{S(I)}\right\|_{L^{p'}_\mu(\sigma)}^{p'}\le C^\star_0 \mu u(S(I_0)),
    \end{equation}
for all $I_0\in\mathcal D^{\beta}$.
Moreover, there exists a constant $C_1>0$ independent of the weights, such that
    $$
    \left\|P_{\Psi,\mu}^{\beta}(\sigma\cdot) \right\|_{L^p_\mu(\sigma) \to L_\mu^p(u)} \le C_1(C_0+C^\star_0).
    $$
\end{corollary}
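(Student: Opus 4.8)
The plan is to observe that $P^{\beta}_{\Psi,\mu}$ is an instance of the abstract positive dyadic operator $T_{\mu,\tau,\mathcal D}$ of \eqref{e.dyadop}, and then to read the corollary off from Theorem~\ref{th:maind} by translating its testing conditions. Starting from \eqref{e.drm} and using $\langle f,1_{S(I)}\rangle_{L^2_\mu}=\mu(S(I))\,\mathbb E^{\mu}_{S(I)}f$, one checks directly that
$$
P^{\beta}_{\Psi,\mu}(f)=\sum_{I\in\mathcal D^{\beta}}\frac{\Psi(|I|)}{|I|}\langle f,1_{S(I)}\rangle_{L^2_\mu}\,1_{S(I)}=\sum_{I\in\mathcal D^{\beta}}\tau_{S(I)}\,(\mathbb E^{\mu}_{S(I)}f)\,1_{S(I)},\qquad \tau_{S(I)}=\frac{\Psi(|I|)\mu(S(I))}{|I|}.
$$
Thus $P^{\beta}_{\Psi,\mu}=T_{\mu,\tau,\mathcal D^{\beta}}$ with this (nonnegative, since $\Psi>0$) choice of $\tau$, so Theorem~\ref{th:maind} applies with $\mathcal D=\mathcal D^{\beta}$; what remains is to recognize its quantities \eqref{e.test}--\eqref{e.testd} inside \eqref{e.testpbeta}--\eqref{e.testpbetad}.

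For the first testing condition I would compute the pairing in \eqref{e.testpbeta}: if $I\subset I_{0}$ then $S(I)\subset S(I_{0})$, whence $\langle \sigma 1_{S(I_0)},\Psi(|I|)1_{S(I)}/|I|\rangle_{L^2_\mu}=\tau_{S(I)}\,\mathbb E^{\mu}_{S(I)}(\sigma 1_{S(I_0)})$. Hence the function inside the norm in \eqref{e.testpbeta} is exactly the part of $T(\sigma 1_{S(I_0)})$ supported on the descendants $S(I)\subset S(I_{0})$. Since $P^{\beta}_{\Psi,\mu}$ is self-adjoint with respect to $\langle\cdot,\cdot\rangle_{L^2_\mu}$, one has $T^{\star}=T$, and the identical computation with $(\sigma,p)$ replaced by $(u,p')$ turns the adjoint condition \eqref{e.testd} into \eqref{e.testpbetad}.

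With these identifications both implications follow. For necessity, boundedness of $P^{\beta}_{\Psi,\mu}(\sigma\cdot)$ gives, by pointwise nonnegativity, that the integrand of \eqref{e.testpbeta} is dominated by $P^{\beta}_{\Psi,\mu}(\sigma 1_{S(I_0)})$, so that its $L^{p}_\mu(u)$-norm is at most $\|P^{\beta}_{\Psi,\mu}(\sigma\cdot)\|\,(\mu\sigma(S(I_0)))^{1/p}$; raising this to the power $p$ gives \eqref{e.testpbeta} with $C_{0}=\|P^{\beta}_{\Psi,\mu}(\sigma\cdot)\|^{p}$. The dual estimate \eqref{e.testpbetad} follows the same way after passing to the adjoint $P^{\beta}_{\Psi,\mu}(u\cdot):L^{p'}_\mu(u)\to L^{p'}_\mu(\sigma)$, whose norm equals that of $P^{\beta}_{\Psi,\mu}(\sigma\cdot)$. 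For sufficiency and the bound $\|P^{\beta}_{\Psi,\mu}(\sigma\cdot)\|\le C_1(C_0+C_0^{\star})$, I invoke Theorem~\ref{th:maind}.

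The one point needing care is the gap between the global quantity $\|T(\sigma 1_{S(I_0)})\|_{L^{p}_\mu(u)}$ in \eqref{e.test} and the localized quantity in \eqref{e.testpbeta}, which discards the nonnegative contributions of the ancestors $S(I)\supsetneq S(I_{0})$; the former always dominates the latter. The deduction must therefore be arranged so that it is the localized condition that is fed into Theorem~\ref{th:maind}. This is legitimate because its sufficiency proof uses the testing condition only through \eqref{e.normtl}, where the relevant sum runs over $S\in\mathcal D(L)$ with $S\subset L$ and is bounded pointwise by $\sum_{S\subset L}\tau_{S}(\mathbb E^{\mu}_{S}\sigma)1_{S}$, i.e.\ precisely the integrand of \eqref{e.testpbeta} with $I_{0}=L$. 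Verifying that this localized testing suffices is the main, and essentially the only nontrivial, step.
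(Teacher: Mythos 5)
Your proposal is correct and follows essentially the same route as the paper, which proves the corollary precisely by identifying $P^{\beta}_{\Psi,\mu}$ with the dyadic operator $T_{\mu,\tau,\mathcal D^{\beta}}$ for $\tau_{S(I)}=\Psi(|I|)\mu(S(I))/|I|$ and invoking Theorem~\ref{th:maind}. Your additional observation about the gap between the global testing quantity $\|T(\sigma 1_{S(I_0)})\|_{L^p_\mu(u)}$ in \eqref{e.test} and the localized sum in \eqref{e.testpbeta} --- and your verification that the sufficiency proof of Theorem~\ref{th:maind} only uses the localized quantity through \eqref{e.normtl} --- is a legitimate and worthwhile refinement of a point the paper passes over in silence.
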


\medskip

\begin{Prf}{\em{ Theorem~\ref{th:twoweights}}.}
By the equivalence of (A) and (C), $P^{+}_{\Psi,\mu}: L^p_\mu(v)\to L^p_\mu(u)$ is bounded if and only if
$\M_{u^{1/p}}P^+_{\Psi,\mu}\M_{\sigma^{1/p'}}:L^p_\mu\to L^p_\mu$ is bounded.
By the hypothesis (iv), the adjoint of
$\M_{u^{1/p}}P^+_{\Psi,\mu}\M_{\sigma^{1/p'}}$ with respect to the
$L^2_\mu$-pairing is $\M_{\sigma^{1/p'}}P^+_{\Psi,\mu}\M_{u^{1/p}}$.
Consequently, the necessity of the conditions \eqref{j1} and \eqref{j2} is obvious.
Conversely, by the first inequality in \eqref{eq:kernelcomp}, the
testing conditions \eqref{j1} and \eqref{j2} imply the corresponding
testing conditions for each $P^\beta_{\Psi,\mu}$,
$\beta\in\{0,1/2\}$, that is, conditions \eqref{e.testpbeta} and
\eqref{e.testpbetad}, and therefore the boundedness of each operator
$P_{\Psi,\mu}^{\beta}(\sigma\cdot):L_\mu^{p}(\sigma)\rightarrow  L_\mu^{p}(u)$,
$\b\in\{0,1/2\}$, by Corollary~\ref{c.twodB}. The second inequality in \eqref{eq:kernelcomp} now implies the boundedness of
$P_{\Psi,\mu}^{+}:L_\mu^{p}(\sigma)\rightarrow  L_\mu^{p}(u)$ with the required norm bounds. Finally, by using the equivalence of (A) and (B) and \eqref{equivnorms}, we deduce that $P_{\Psi,\mu}^{+}:L_\mu^{p}(v)\rightarrow  L_\mu^{p}(u)$ is bounded with the claimed norm bound.
\end{Prf}

\medskip

 \end{document}